\documentclass[12pt]{article}
\usepackage{a4wide}

\usepackage{amsmath,amsthm}
\usepackage{amsfonts}
\usepackage{amscd}
\usepackage{mathrsfs}
\usepackage{amssymb}
\let\mathscr\undefined

\renewcommand{\le}{\leqslant}
\renewcommand{\leq}{\leqslant}

\renewcommand{\ge}{\geqslant}
\renewcommand{\geq}{\geqslant}

\usepackage{graphicx}
\usepackage{dsfont}
\usepackage{color}
\usepackage{bbm}
\usepackage[colorlinks]{hyperref}
\hypersetup{citecolor=blue}

\usepackage{sseq}
\usepackage{tikz-cd}
\usepackage[abs]{overpic} 
\usepackage{bm}

\newcommand*{\longhookrightarrow}{\ensuremath{\lhook\joinrel\relbar\joinrel\rightarrow}}

\theoremstyle{theorem}
\newtheorem{TheoremA}{Theorem}

\newtheorem{CorollaryA}{Corollary}

\newtheorem{theorem}{Theorem}[section]
\newtheorem{lemma}[theorem]{Lemma}

\newtheorem*{question*}{Question}

\theoremstyle{definition}
\newtheorem*{definition*}{Definition}
\newtheorem{definition}[theorem]{Definition}

\newtheorem*{example*}{Example}
\newtheorem{example}[theorem]{Example}
\newtheorem*{observation*}{Observation}

\newtheorem*{Goal*}{Goal}

\newtheorem*{Assumption*}{Assumption}

\theoremstyle{remark}
\newtheorem*{remark*}{Remark}
\newtheorem{remark}[theorem]{Remark}

\numberwithin{equation}{section}

\newcommand{\ow}{\omega}
\newcommand{\p}{\partial}

\newcommand{\C}{{\mathbb{C}}}
\newcommand{\R}{{\mathbb{R}}}
\newcommand{\Q}{{\mathbb{Q}}}
\newcommand{\Z}{{\mathbb{Z}}}
\newcommand{\N}{{\mathbb{N}}}

\DeclareMathOperator{\morse}{Morse}
\DeclareMathOperator{\floer}{Floer}

\DeclareMathOperator{\topo}{top}
\DeclareMathOperator{\Vol}{Vol}

\DeclareMathOperator{\anti}{anti}
\DeclareMathOperator{\inv}{inv}
\DeclareMathOperator{\rel}{rel}
\DeclareMathOperator{\ch}{ch}

\DeclareMathOperator{\bottom}{bottom}
\DeclareMathOperator{\topp}{top}

\DeclareMathOperator{\supp}{supp}
\DeclareMathOperator{\Gr}{Gr}

\DeclareMathOperator{\Diff}{Diff}

\DeclareMathOperator{\im}{Im}

\DeclareMathOperator{\Symp}{Symp}

\DeclareMathOperator{\Spec}{Spec}

\DeclareMathOperator{\Fix}{Fix}

\DeclareMathOperator{\HF}{HF}
\DeclareMathOperator{\CF}{CF}
\DeclareMathOperator{\CM}{CM}
\DeclareMathOperator{\HM}{HM}
\DeclareMathOperator{\Ho}{H}
\DeclareMathOperator{\st}{st}
\DeclareMathOperator{\Int}{int}

\begin{document}

\title{
Floer-theoretic entropy of exact symplectomorphisms
}

\author{Joontae Kim and Myeonggi Kwon}

\maketitle

\begin{abstract}
We introduce the notion of a Penner-type class of an  exact symplectomorphism on a Liouville domain with an $A_k$-configuration of Lagrangian spheres for $k\ge 2$, and prove that such a class has positive Floer-theoretic entropy.
As a corollary, we construct infinitely many smoothly trivial symplectic isotopy classes of exact symplectomorphisms with positive Floer-theoretic entropy on any $4n$-dimensional Liouville domain that admits an $A_2$-configuration of Lagrangian spheres.
We also prove that the Floer-theoretic entropy of an exact symplectomorphism on a Liouville domain provides a lower bound for its topological entropy.
\end{abstract}


\section{Introduction}\label{sec: intro}
A symplectomorphism $\phi\colon W\to W$ of a Liouville domain $(W,\lambda)$ is called \emph{exact} if the 1-form $\phi^*\lambda-\lambda$ is exact.
We denote by $\Symp(W,\lambda)$ the \emph{group of exact symplectomorphisms} of $W$ supported in the interior $\Int W=W\setminus \p W$, that is,
$
\supp(\phi)=\overline{\{x\in W \mid \phi(x)\ne x\}} \subset \Int W,
$
and we endow it with the $C^\infty$-topology.
The \emph{symplectic mapping class group} of a Liouville domain $(W,\lambda)$ is defined as $\pi_0 \Symp(W,\lambda)$.
This group has a distinguished subgroup called the \emph{smoothly trivial symplectic mapping class group} defined by
$$
K(W,\lambda):=\ker\big[\pi_0 \Symp(W,\lambda) \to \pi_0 \Diff(W)\big],
$$
where $\Diff(W)$ denotes the group of diffeomorphisms of $W$.
It has been a central topic in symplectic topology to understand how non-trivial or large this group can be.
For results on \emph{closed} symplectic four manifolds $(M,\ow)$ (with the  analogous definition of $K(M,\ow)$), we refer the reader to \cite{Se00, Se08a, To15}.
Remarkably, it was shown in \cite{ShSm20, Sm22} that $K(X,\ow)$ is infinitely generated for certain K3 surfaces $(X,\ow)$.
In the context of Liouville domains $(W,\lambda)$, we refer to \cite{Se99, Ke14, BarGeiZeh19}.
In particular, \cite[Theorem~1.4]{BarGeiZeh19}, combined with the arguments in the first part of Section~\ref{sec: corollaryA}, implies that $K(W,\lambda)$ is infinite whenever $(W,\lambda)$ admits a Lagrangian sphere and $\dim W=4n$.

In this paper we study the smoothly trivial symplectic mapping class group $K(W,\lambda)$ of a Liouville domain from the perspective of the Floer-theoretic entropy.
While the algebraic ``size" of $K(W,\lambda)$ has been extensively studied, much less is known about the dynamical aspects of its classes. 
To formulate our main results, we begin by introducing the relevant notions.

The \emph{Floer-theoretic entropy} $h_{\floer}(\phi)$ of $[\phi]\in\pi_0\Symp(W,\lambda)$ (see Definition~\ref{def: floer_entropy}) measures the exponential growth rate of the dimension of the fixed point Floer homology $\HF(\phi)$ under iteration of $\phi$.
The \emph{Dehn twist} $\tau_L$ along a Lagrangian sphere $L$ in a symplectic manifold is a certain symplectomorphism  supported in a neighborhood of $L$, generalizing the classical Dehn twist on a surface.
It is uniquely defined up to symplectic isotopy and the choice of parametrisation $S^n\hookrightarrow L$.
If the ambient symplectic manifold is exact, then so is $\tau_L$.
We refer to \cite[16a, 16c]{Se08b}, \cite[Section~6]{Se99}, and \cite[Section~3.4]{To15} for details.
Unless we require a specific parametrisation, we assume that the parametrisation of any Lagrangian sphere is implicitly fixed whenever a Dehn twist along it is considered.
A collection $(L_1,\dots,L_k)$ of pairwise transverse Lagrangian spheres in a symplectic manifold satisfying
$$
|L_i\cap L_j|=\begin{cases}
	1 & \text{for $|i-j|=1$}, \\
	0 & \text{for $|i-j|\ge 2$}
\end{cases} 
$$
is called an \emph{$A_k$-configuration}.
In particular, two Lagrangian spheres $(L_1,L_2)$ transversely intersecting at a single point form an $A_2$-configuration.
We say that $[\phi]\in \pi_0\Symp(W,\lambda)$ is of \emph{$A_k$-Penner-type} with respect to an $A_k$-configuration $(L_1,\dots,L_k)$ if it has a representative $\phi$ that is a product of positive/negative Dehn twists $\tau_{L_i}^{\pm1}$ along $L_i$ for $i$ odd and negative/positive Dehn twists $\tau_{L_j}^{\mp1}$ along $L_j$ for $j$ even, such that all Lagrangian spheres $L_1,\dots,L_k$ appear at least once.
The motivation for this definition is discussed in Remark~\ref{rem: akpenner}.
For instance, if $k=3$, then the class of $\phi=\tau_{L_1}^3\tau_{L_2}^{-2}\tau_{L_1}\tau_{L_3}^5$ is of $A_3$-Penner-type.
If $[\phi]\in\pi_0\Symp(W,\lambda)$ is of $A_2$-Penner-type, then $[\phi]$ is represented by the map $\phi_V$ in \eqref{eq: A2_pennerLag}, viewed as a map in $\Symp(W,\lambda)$.

Our main result provides a sufficient condition for the existence of symplectic isotopy classes of positive Floer-theoretic entropy:

\begin{TheoremA}\label{thm: mainthmA}
Let $(W,\lambda)$ be a Liouville domain with an $A_k$-configuration of Lagrangian spheres for $k\ge 2$.
Then, every $A_k$-Penner-type class $[\phi]\in \pi_0\Symp(W,\lambda)$ satisfies $h_{\floer}(\phi)>0$.
\end{TheoremA}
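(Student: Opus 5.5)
The plan is to reduce to a neighbourhood of the configuration and then bound $\dim \HF(\phi^m)$ from below by a Lagrangian Floer homology that can be computed combinatorially. The Dehn twists are supported near $L_1\cup\dots\cup L_k$, so $\phi$ is supported in a Liouville subdomain $V\subset W$. This subdomain is a plumbing of $k$ copies of $T^*S^n$ along the $A_k$-tree, i.e.\ a Milnor fibre of the $A_k$ singularity.

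The first step is a lower bound
\[
\dim \HF(\phi^m)\;\ge\;\dim \HF(\phi^m(L),L')
\]
for suitable fixed Lagrangians $L,L'$, up to a constant independent of $m$. I expect to get this from the Lefschetz-fixed-point/Floer product structure: $\HF(\phi^m)$ acts on $\HF(L,\phi^m(L))$, or it factors through the closed–open map. Alternatively, and more cleanly, I would use a Viterbo-type restriction from $W$ to $V$, together with the Künneth/graph argument identifying $\HF(\phi^m)$ with $\HF(\Gamma_{\phi^m},\Delta)$ in $W\times W^-$. If the paper defines $h_{\floer}$ via the growth of $\dim\HF(\phi^m)$, I will assume any such comparison lemma stated earlier. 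Otherwise I would prove that the growth rate of $\HF(\phi^m)$ dominates that of Lagrangian Floer homology for exact spheres, via the module structure.

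The second step is to show that $\dim \HF(\phi^m(L_i),L_j)$ grows exponentially in $m$. Here I would use that the Dehn twists act on the Fukaya category like spherical twists. Let $L$ range over the $L_i$. Then the Seidel exact triangle
\[
\HF(\tau_{L_a}L,L')\leftarrow \HF(L_a,L)\otimes\HF(L,\dots)
\]
gives recursions for the graded ranks. The Penner sign condition (positive twists on odd spheres, negative on even) is exactly what forces these ranks to add without cancellation. Grading the chain complexes by Maslov index, the relevant morphism spaces $\HF(L_a,L_b)$ are concentrated in a single degree, shifted by $n$ according to the parity of $a$. Positive and negative twists then shift in compatible directions, so the connecting maps vanish. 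Consequently, the rank vector $(\dim \HF(\phi^m(L),L_j))_j$ is updated by a Penner-type nonnegative matrix $\prod (I+ N_{a})$, where $N_a$ comes from the $A_k$ adjacency. This mirrors Penner's construction on surfaces.

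Since every $L_i$ appears and the $A_k$ graph is connected, this product matrix is a primitive nonnegative integer matrix that is not a permutation. Its Perron–Frobenius eigenvalue is therefore $>1$, which gives exponential growth and hence $h_{\floer}(\phi)>0$. The expected main obstacle is the second step: justifying that no cancellation occurs in the long exact sequences. I would first try a grading argument on the $\Z$-graded Floer complexes in the Milnor fibre. If that fails, I would try the explicit model of $\phi_V$ in \eqref{eq: A2_pennerLag} for $k=2$, and reduce general $k$ by restricting to a sub-$A_2$ configuration appearing in $\phi$.
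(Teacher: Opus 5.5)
\textbf{The gap is your first step.} Everything rests on a lower bound $\dim\HF(\phi^m)\ge\dim\HF(\phi^m(L),L')-C$. The paper does define $h_{\floer}$ through $\dim\HF(\phi^m)$, but it contains no such comparison lemma, and none of the mechanisms you name produces one.

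\begin{itemize}
\item A module action or closed--open map such as $\HF(\phi^m)\otimes\HF(L,L')\to\HF(L,\phi^m(L'))$ bounds dimensions only if it is surjective, and you have no way to check that.
\item The graph identification $\HF(\phi^m)\cong\HF(\Delta,\Gr(\phi^m))$ of Theorem~\ref{thm: can_isom} could be combined with a generation statement for $\Delta$ by products $L_i\times L_j$. That gives the \emph{opposite} inequality $\dim\HF(\phi^m)\le C\sum_{i,j}\dim\HF(\phi^m(L_i),L_j)$, which is useless for positivity.
\item The closest genuine link, Seidel's triangle \eqref{eq: seidel_exact_tri}, yields only $\dim\HF(\phi^m(L),L)\le\dim\HF(\phi^m)+\dim\HF(\phi^m\tau_L)$. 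This controls the larger of two fixed-point groups, and $\phi^m\tau_L$ is not a power of $\phi$.
\end{itemize}

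So even full control of the Lagrangian Floer ranks would not, by any argument you give, bound $h_{\floer}(\phi)$ from below.

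\textbf{Your second step is also not justified as stated, because cancellation does occur.} In the $A_2$ fibre, the composition $\HF(L_2,L_1)\otimes\HF(L_1,L_2)\to\HF(L_2,L_2)$ hits the top class. Hence $\dim\HF(\tau_{L_1}(L_2),L_2)=1$, not the value $2+1=3$ your additive update rule predicts. The degrees are consistent with this cancellation rather than ruling it out. A correct version would go through Khovanov--Seidel's identification of these ranks with intersection numbers of curves in the disc, that is, through the surface anyway. Your fallback of restricting to a sub-$A_2$ configuration is not meaningful either, since the remaining twists act on $L_1$ and $L_2$.

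\textbf{How the paper avoids this.} It never uses Lagrangian Floer homology of the spheres. First it reduces to $V$ by the local-to-global identity (Theorem~\ref{thm: localtoglobal}), which is proved with an exact triangle involving $\Ho(W\setminus V)$, not a restriction map. It then arranges $\phi|_V$ to commute with the involutions $\sigma_j$. Applying the Seidel--Smith inequality inductively to the graph Lagrangians $\Delta$ and $\Gr(\phi^m)$ gives $\dim\HF(\phi^m|_V)\ge\dim\HF(\phi^m|_S)$ on the fixed surface $S$ (Theorem~\ref{thm: milnor_smith}). On $S$, Penner's theorem and Cotton-Clay's result (Theorem~\ref{thm: cotton_clay}) supply exponential growth. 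This is a fixed-point-to-fixed-point comparison, which is exactly the ingredient your plan lacks.
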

It is known that every Milnor fibre of an isolated hypersurface singularity with Milnor number at least $2$ admits an $A_2$-configuration of Lagrangian spheres, see \cite[Lemma~9.9 and Lemma~9.11]{Ke14}.
Hence, there is an abundance of examples to which our main result applies.
Theorem~\ref{thm: mainthmA} may fail if the Liouville domain is replaced by a closed symplectic manifold, as illustrated in Example~\ref{ex: failure}.
We note that a product of Dehn twists along a \emph{single} Lagrangian sphere, that is, $A_1$-configuration, does not produce positive Floer-theoretic entropy, see Example~\ref{ex: singledehn}.

We outline the proof of Theorem~\ref{thm: mainthmA} in the four dimensional case, $\dim W=4$.
If $[\phi]\in\pi_0\Symp(W,\lambda)$ is of $A_k$-Penner-type, then we select a representative $\phi$ supported in a neighborhood of the $A_k$-configuration, identified with a Milnor fibre $V$ of the $A_k$-singularity.
This Milnor fibre admits a symplectic involution $\sigma$ whose fixed locus $S$ is a two dimensional Milnor fibre of the $A_k$-singularity.
We may assume that $\phi|_V$ commutes with $\sigma$.
The main ingredients of the proof of Theorem~\ref{thm: mainthmA} are now the local-to-global identity (Theorem~\ref{thm: localtoglobal}) and a Smith-type inequality for Floer-theoretic entropy (Theorem~\ref{thm: milnor_smith}):
the former implies $h_{\floer}(\phi)=h_{\floer}(\phi|_V)$ and the latter yields $h_{\floer}(\phi|_V)\ge h_{\floer}(\phi|_S)$.
Since $[\phi]$ is of $A_k$-Penner-type, $\phi|_S$ corresponds to Penner's construction (Theorem~\ref{thm: penner_construction}), which is pseudo-Anosov, and hence $h_{\floer}(\phi|_S)>0$ by Cotton-Clay \cite{Co09}, see Theorem~\ref{thm: cotton_clay}.

Theorem~\ref{thm: mainthmA} yields the following application:


\begin{CorollaryA}\label{cor: corollaryA}
In the situation of Theorem~\ref{thm: mainthmA} with $\dim W=4n$ and $k=2$, there exist infinitely many symplectic isotopy classes in $K(W,\lambda)$ with positive Floer-theoretic entropy.
\end{CorollaryA}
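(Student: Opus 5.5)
We need infinitely many classes in $K(W,\lambda)$, pairwise non-isotopic, each with positive Floer-theoretic entropy. The natural candidates are $\phi_m=(\tau_{L_1}^{2}\tau_{L_2}^{-2})^m$ for $m\ge 1$, or more flexibly $\tau_{L_1}^{2a}\tau_{L_2}^{-2b}$. Each such class is of $A_2$-Penner-type, since $L_1$ appears with positive twists, $L_2$ with negative twists, and both appear. Theorem~\ref{thm: mainthmA} therefore gives $h_{\floer}(\phi_m)>0$ directly. The real work is to show that these classes are smoothly trivial and pairwise distinct in $\pi_0\Symp(W,\lambda)$.

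\textbf{Smooth triviality.} When $\dim W=4n$, each Lagrangian sphere $L$ has even dimension $2n$. In this case the squared Dehn twist $\tau_L^2$ is smoothly isotopic to the identity, compactly supported in a neighborhood $T^*S^{2n}$ of $L$. This is the standard fact, as in \cite{Se00} and \cite[Section~6]{Se99}: the model Dehn twist is the monodromy of $A_1$, and its square is isotopic to the identity through diffeomorphisms supported near the zero section, because the relevant element of $\pi_{2n}$ of the orthogonal group vanishes, or via the explicit isotopy from the $SO(2n+1)$-action. For $\tau_L^{-2}$ we take the inverse isotopy. Composing these isotopies shows that any word in $\tau_{L_1}^{\pm2},\tau_{L_2}^{\pm2}$ lies in $\ker[\pi_0\Symp\to\pi_0\Diff]$. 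I expect the paper's "first part of Section~\ref{sec: corollaryA}" to carry out exactly this step.

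\textbf{Infinitely many distinct classes.} I would use the Floer-theoretic entropy or growth data itself. Since $h_{\floer}$ depends only on the symplectic isotopy class, it suffices to find a sequence of classes whose entropies take infinitely many values. One approach uses $\phi_m=\psi^m$ with $\psi=\tau_{L_1}^2\tau_{L_2}^{-2}$. Entropy is homogeneous under iteration, $h_{\floer}(\psi^m)=m\,h_{\floer}(\psi)$; this follows from the definition as an exponential growth rate along iterates, which I would verify from Definition~\ref{def: floer_entropy}, passing to the subsequence of multiples of $m$. Because $h_{\floer}(\psi)>0$, the values $m\,h_{\floer}(\psi)$ are pairwise distinct, so the classes $[\psi^m]$ are pairwise non-isotopic. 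A fallback, if homogeneity is awkward, is to observe that equality $[\psi^m]=[\psi^{m'}]$ for $m\neq m'$ would make some power $[\psi^{|m-m'|}]$ trivial. The identity has zero entropy, since $\HF(\mathrm{id})\cong H^*(W)$ for all iterates. By contrast, $\psi^{|m-m'|}$ is again of $A_2$-Penner-type and so has positive entropy, which is a contradiction. This fallback needs only Theorem~\ref{thm: mainthmA} and the computation for the identity, so I would use it as the main argument.

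\textbf{Main obstacle.} The delicate step is smooth triviality. One must check that the smooth isotopy of $\tau_L^2$ to the identity is compactly supported in the interior of $W$, which holds because it is supported in a Weinstein neighborhood of $L$. One must also check that the parity condition $\dim L=2n$ even is exactly what is needed. In odd sphere dimension the square has infinite order already in the smooth mapping class group, acting on homology by a nontrivial transvection, so the hypothesis $\dim W=4n$ is essential.
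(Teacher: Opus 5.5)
\textbf{The smooth-triviality step fails in general.} Your claim that $\tau_L^2$ is smoothly isotopic to the identity through diffeomorphisms supported near $L$ whenever $\dim L$ is even is false. It holds for $\dim L\in\{2,6\}$, but the homotopy-theoretic justification you sketch does not work in other dimensions.

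Here is a counterexample for $\dim L=4$. Suppose $\tau^2$ were isotopic to the identity rel boundary on $D^*S^4$. Then the abstract open books with page $D^*S^4$ and monodromies $\tau$ and $\tau^3$ would be diffeomorphic. The first is the Milnor open book of $z_0^2+\dots+z_4^2$ on $S^9$. The second is the Brieskorn sphere $\Sigma(3,2,2,2,2,2)$, the link of $w^3+z_0^2+\dots+z_4^2$. That sphere is the exotic Kervaire $9$-sphere, so the two cannot be diffeomorphic.

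Consequently, for $\dim W=4n$ with $2n\notin\{2,6\}$, your argument does not show that $[\tau_{L_1}^2\tau_{L_2}^{-2}]$ lies in $K(W,\lambda)$. What is true for every even $\dim L$ is that $\tau_L^{8}$ is smoothly isotopic to the identity by a compactly supported isotopy near $L$ (see \cite[Section~5]{Se14_1}). This is why the paper uses $\tilde\phi=\tau_{L_1}^{8a_1}\tau_{L_2}^{-8b_1}\cdots\tau_{L_1}^{8a_m}\tau_{L_2}^{-8b_m}$. Replacing your exponent $2$ by $8$, for example $\psi=\tau_{L_1}^{8}\tau_{L_2}^{-8}$, repairs this step.

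\textbf{The rest of your argument.} With that change, your fallback argument is exactly the paper's. Each power $\psi^j$ is again of $A_2$-Penner-type, so Theorem~\ref{thm: mainthmA} gives $h_{\floer}(\psi^j)>0$ for all $j\ge 1$. Positivity of $h_{\floer}(\psi)$ already forces $[\psi]$ to have infinite order (Lemma~\ref{lem: entropy_infinite}), so the classes $[\psi^j]$ are pairwise distinct.

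You should discard the homogeneity claim $h_{\floer}(\psi^m)=m\,h_{\floer}(\psi)$. Restricting the $\limsup$ to multiples of $m$ only gives $h_{\floer}(\psi^m)\le m\,h_{\floer}(\psi)$. As the paper remarks, positivity of Floer-theoretic entropy need not even pass to powers in general. What makes the argument work is that every power remains of Penner-type.
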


In fact, these infinitely many symplectic isotopy classes are generated by taking powers of any \emph{single} $A_2$-Penner-type class.
Since the positivity of Floer-theoretic entropy is not necessarily preserved under taking powers of a class, Corollary~\ref{cor: corollaryA} is not an immediate consequence.
Instead, we observe that all powers of an $A_2$-Penner-type class remain of $A_2$-Penner-type.
\begin{remark}
One can construct even more classes in $K(W,\lambda)$ with positive Floer-theoretic entropy.
For example, consider $\phi=\tau_{L_1}^8\tau_{L_2}^{-8}$ and $\psi=\tau_{L_1}^{16}\tau_{L_2}^{-16}$, whose classes in $K(W,\lambda)$ have infinite order.
As shown in Section~\ref{sec: corollaryA}, the condition that the powers of the Dehn twists are multiples of 8 ensures that $[\phi]$ and $[\psi]$ are smoothly trivial.
Through a direct computation combined with Theorem~\ref{thm: mainthmA}, one can show that $[\phi]^\ell\ne [\psi]^k$ for any $\ell,k\ge 1$.
\end{remark}

The second goal of this paper is to prove the following theorem.
Let $h_{\topo}(\phi)$ denote the \emph{topological entropy} of $\phi\in\Diff(W)$, as defined in \cite[Definition~3.1.3]{KaHa95} and \cite[Section A.2]{AbAlSaSc23}.
\begin{TheoremA}\label{thm: mainthmB}
Let $(W,\lambda)$ be a Liouville domain.
Then, every $\phi\in \Symp(W,\lambda)$ satisfies
\begin{equation}\label{eq: entropy_ineq}
h_{\topo}(\phi)\ge h_{\floer}(\phi).	
\end{equation}
\end{TheoremA}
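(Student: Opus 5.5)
We will deduce the inequality from the Yomdin-type bound relating topological entropy to the growth of Floer homology under iteration. The relevant fact is that if $\phi$ is smooth, then the growth of the number of fixed points of $\phi^m$ counted "with homological weight" is bounded by $h_{\topo}(\phi)$. The plan is to represent $\HF(\phi^m)$ by a Floer complex whose generator count we can control, and then relate that count to a volume growth quantity that Yomdin's theorem bounds by $h_{\topo}$.

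\textbf{Step 1.} Realise $\HF(\phi^m)$ as Lagrangian Floer homology of the graph. Concretely, $\HF(\phi^m)\cong \HF(\Delta,\Gamma_{\phi^m})$ in the product $W\times W^-$ (suitably completed, with $\phi$ extended by the identity near $\partial W$ and a small positive twist at the boundary). Equivalently, $\HF(\phi^m)\cong \HF\big((\mathrm{id}\times\phi^{m})(\Delta),\Delta\big)$, taken in a Liouville product where $\Delta$ is exact and $\phi$ exact, so that no bubbling occurs.

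\textbf{Step 2.} Bound the rank by a count of intersection points after a perturbation. We fix a generic small Hamiltonian perturbation, or more robustly a Lagrangian perturbation $\Delta'$ of $\Delta$ drawn from a finite-dimensional family $\{\Delta_s\}_{s\in B}$ parametrised by a ball $B$ (for instance graphs of $dF_s$ on a Weinstein neighbourhood of $\Delta$). For almost every $s$, $\dim\HF(\phi^m)\le \#\big((\mathrm{id}\times\phi^m)(\Delta)\cap \Delta_s\big)$. Averaging over $s$ with a Crofton-type integral-geometric formula gives
\[
\dim \HF(\phi^m)\le C\cdot \mathrm{Vol}\big((\mathrm{id}\times\phi^m)(\Delta)\cap U\big),
\]
where $U$ is a neighbourhood of $\Delta$ and $C$ is independent of $m$. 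This is the technique of Cineli--Ginzburg--Gürel and Meiwes for barcode and Floer entropy, and the reference \cite{AbAlSaSc23} cited above.

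\textbf{Step 3.} Apply Yomdin. The volume of $(\mathrm{id}\times\phi^m)(\Delta)$ is that of the graph of $\phi^m$, a smooth submanifold. By Yomdin's theorem (the $C^\infty$ case, with Newhouse's refinement), the exponential growth rate of volumes of images of submanifolds under a $C^\infty$ map is bounded by its topological entropy. Moreover $h_{\topo}(\mathrm{id}\times\phi)=h_{\topo}(\phi)$. Taking $\limsup \frac1m\log$ then gives $h_{\floer}(\phi)\le h_{\topo}(\phi)$, assuming Definition~\ref{def: floer_entropy} uses $\limsup\frac1m\log\dim\HF(\phi^m)$.

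\textbf{Main obstacle.} The hardest point is the non-compactness and boundary. Floer homology of $\phi^m$ on a Liouville domain involves a boundary convention (a small positive or negative Reeb twist near $\partial W$), which may create extra generators near the boundary whose number must not grow with $m$. We handle this by choosing the boundary perturbation fixed and $m$-independent in the collar, where $\phi=\mathrm{id}$. Generators there then correspond to a fixed finite set, or the count is absorbed into a constant. A second point is justifying the Crofton/averaging estimate uniformly in $m$: the perturbation family must be transverse for generic $s$, and the intersection count must be controlled by volume inside a compact region. We would first try to cite the corresponding statement from \cite{AbAlSaSc23} for Lagrangian pairs directly and reduce to it via the graph construction.
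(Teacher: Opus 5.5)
Your proposal is correct and follows essentially the same route as the paper: pass to $\HF(\Delta,\Gr(\phi^k))$ in the rounded product $\widetilde{W\times W}$, and push $\Gr(\phi^k)$ off $\p\Delta$ by a $k$-independent positive isotopy in the collar where $\phi=\mathds{1}$. Then bound $\dim\HF(\phi^k)$ by intersection counts with a Lagrangian tomograph of $\Delta$, hence via Crofton's inequality by a constant times $\Vol(\Gr(\phi^k))$ up to a $k$-independent error, and conclude with Yomdin's theorem for $\mathds{1}\times\phi$ and $h_{\topo}(\mathds{1}\times\phi)=h_{\topo}(\phi)$. Two cosmetic points: only Yomdin's $C^\infty$ inequality (volume growth bounded by topological entropy) is needed, and you should work on the compact rounded product, where $\mathds{1}\times\phi$ is supported in the interior, rather than on a completion, so that Yomdin's theorem applies directly.
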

The proof relies on arguments using a Lagrangian tomograph and Crofton's inequality, see \cite[Section~5.2]{CiGiGu21}, \cite[Section~3]{BaLe25}, and  \cite[Section~4.3]{KiKw26}.
Theorem~\ref{thm: mainthmB} implies that classes of Penner-type necessarily have positive topological entropy, which is not obvious for topological reasons.
The invariance property of Floer-theoretic entropy ensures that the right-hand side of Equation~\ref{eq: entropy_ineq} is independent of the choice of a representative $\phi$ of $[\phi]\in\pi_0\Symp(W,\lambda)$.
Another intriguing feature is that the uniform positivity of topological entropy holds for \emph{symplectic mapping classes} having positive Floer-theoretic entropy, rather than depending on a choice of a representative. 
Consequently, this exhibits a genuine symplectic phenomenon.
Regarding analogous results, we refer to \cite{Po02,FrSc05,KKL18,AlMe19,Da20} and references therein.
\begin{remark}\label{rem: intro}
Since $h_{\topo}(\phi)<\infty$ for every diffeomorphism $\phi$ on a compact smooth manifold \cite[Corollary~3.2.10]{KaHa95}, we deduce that $h_{\floer}(\phi)<\infty$ for any $\phi\in\Symp(W,\lambda)$.
Note that this finiteness is not a consequence of the definition of Floer-theoretic entropy.
\end{remark}
\begin{remark}
We note that \cite{GWX26} has previously studied the positive topological entropy of certain Penner-type maps from $A_k$-configurations, as well as the exponential relative symplectic growth rate in terms of the geometric setup described in Section~\ref{sec: milnorfibres}.
\end{remark}

\subsection*{Organization of the paper}
Section~\ref{sec: floertheory} recalls the relevant Floer theory, specifically, fixed point and Lagrangian Floer homology, and introduces Floer-theoretic entropy with its basic properties.
The local-to-global identity for this entropy, which is one of the core ingredients for the proof of Theorem~\ref{thm: mainthmA}, is derived in Section~\ref{sec: LtoG} through the construction of an exact triangle involving restriction of certain twisted Floer trajectories. 
Section~\ref{sec: smith_ineq} proves a Smith-type inequality for fixed point Floer homology, based on the work of Seidel--Smith \cite{SS}.
Subsequently, Section~\ref{sec: milnorfibre} reviews the basic geometry of a Milnor fibre of the $A_k$-singularity, following \cite[Section~6c]{KhSe02}, and introduce the notion of Penner-type symplectomorphisms, motivated by Penner's result on pseudo-Anosov surface diffeomorphisms \cite{Pe88}.
Finally, the proofs of Theorems~\ref{thm: mainthmA}, \ref{thm: mainthmB}, and Corollary~\ref{cor: corollaryA} are provided in Section~\ref{sec: proofs}.
\section{Floer theory}\label{sec: floertheory}
Throughout this section, $(W,\lambda)$ is a Liouville domain, meaning that $W$ is a compact smooth manifold with boundary, and $\lambda$ is a 1-form on $W$ such that $d\lambda$ is symplectic, and the Liouville vector field $Z$ of $\lambda$ defined by $\iota(Z)d\lambda=\lambda$ points outwards along $\p W$.
The Liouville flow of $Z$ provides a collar embedding $((1-\epsilon,1]\times \p W)\subset W$, yielding there cylindrical coordinates $(r,y)$.

\subsection{Fixed point Floer homology and Floer-theoretic entropy}\label{sec: fixedfloer}
We begin by recalling the definition of the (fixed point) Floer homology $\HF(\phi)$ of $\phi\in \Symp(W,\lambda)$, which is an invariant of $\phi$ up to isotopy.
For a detailed construction, we refer to \cite{Ul17}, \cite[Section~4]{Se01}, \cite[Section~2.2]{Mc12}, and \cite[Section~4]{Mc19}.

Let ${\bm J}=\{J_t\}_{t\in \R}$ be a family of compatible almost complex structures on $W$ that are cylindrical near the boundary, meaning that in cylindrical coordinates $(r,y)\in (1-\epsilon,1]\times \p W$, the condition $J_t^*\lambda=dr$ holds. 
Additionally, suppose that $\bm{J}$ satisfies the periodicity condition
$$
\phi^*J_{t+1}=J_t.
$$
We denote by $\mathcal{J}_\phi$ the space of such families of almost complex structures, and we call ${\bm J}\in \mathcal{J}_\phi$ \emph{admissible}.
Next, consider a non-degenerate Hamiltonian $H\colon \R \times W \to \R$ that is linear near the boundary, that is, $H=ar+b$ in cylindrical coordinates on $(1-\epsilon,1]\times \p W$, with positive slope strictly less than $\min \Spec(\p W, \lambda|_{\p W})$, and $H_t=H(t,\cdot)$ satisfies the periodicity condition
$$
\phi^*H_{t+1}=H_t.
$$
Here, $\Spec(\p W,\lambda|_{\p W})$ denotes the set of the periods of closed Reeb orbits, and by a \emph{non-degenerate} Hamiltonian $H$ (where $\phi_H^t$ denotes its Hamiltonian flow) we mean $\psi_H:=\phi_H^{-1}\circ \phi$ is non-degenerate, that is, for every fixed point $x$ of $\psi_H$ the linearized return map $D_x\psi_H$ does not have 1 as an eigenvalue.
The space of such Hamiltonians is denoted by $\mathcal{H}_{\phi}$, and we refer to any $H\in \mathcal{H}_\phi$ as \emph{admissible}.
We abbreviate by
$$
\mathcal{P}(H)=\{x\colon \R\to W \mid \dot{x}(t)=X_{H_t}(x(t)),\ \phi(x(t))=x(t+1)\}
$$
the set of \emph{Hamiltonian twisted orbits} of $H$, which corresponds bijectively to $\Fix(\psi_H)$, the set  of fixed points of $\psi_H$.

Given $x_\pm \in \mathcal{P}(H)$ with $x_-\ne x_+$,
a \emph{twisted Floer trajectory} for $(H,{\bm J})$ from $x_-$ to $x_+$ is a smooth map $u\colon \R\times \R \to W$ satisfying
$$
\begin{cases}
\p_s u + J_t(u)(\p_t u - X_{H_t}(u))=0, \\
\phi(u(s,t))=u(s,t+1), \\
\displaystyle \lim_{s\to \pm \infty}u(s,t)=x_\pm(t).
\end{cases}
$$
If $\phi=\mathds{1}$ is the identity, a twisted Floer trajectory $u$ is called \emph{untwisted}.
For generic ${\bm J}\in \mathcal{J}_\phi$, the associated moduli space defined by
$$
\mathcal{M}(x_-,x_+;H,{\bm J})=\big\{ \text{all twisted Floer trajectories for $(H,{\bm J})$ from $x_-$ to $x_+$}\big\}\, \big/\, \R,
$$
where the quotient is taken with respect to translation in the $s$-variable, is a smooth manifold, and its 0-dimensional component $\mathcal{M}^0(x_-,x_+;H,{\bm J})$ is compact.
The \emph{Floer chain complex} of $\phi$ with respect to $(H,{\bm J})$ is the $\Z_2$-vector space generated by the orbits of $\mathcal{P}(H)$,
$$
\CF(\phi;H)=\bigoplus_{x\in \mathcal{P}(H)}\Z_2\langle x \rangle
$$
whose differential $\p^{\bm J}\colon \CF(\phi;H)\to \CF(\phi;H)$ is defined by counting rigid twisted Floer trajectories
$$
\p^{\bm J}(x_-)=\sum_{x_+\in \mathcal{P}(H)} \#_2\mathcal{M}^0(x_-,x_+;H,{\bm J})\cdot x_+.
$$
The homology $\HF(\phi;H,{\bm J})$ of the Floer chain complex is called the \emph{(fixed point) Floer homology} of $\phi$.
A standard continuation argument shows that this homology does not depend on the choice of a  generic pair $(H,{\bm J})$, so we simply write $\HF(\phi)$.
Since every path in $\Symp(W,\lambda)$ can be realized by an isotopy generated by a Hamiltonian supported in the interior of $W$, the following invariance property holds: if $\phi,\psi$ are isotopic in $\Symp(W,\lambda)$, that is, $[\phi]=[\psi]\in\pi_0\Symp(W,\lambda)$, then $\HF(\phi)\cong \HF(\psi)$, see \cite[Theorem~2.34]{Ul17}.
\begin{definition}\label{def: floer_entropy}
	The \emph{Floer-theoretic entropy} of $[\phi]\in \pi_0\Symp(W,\lambda)$ is defined by
	$$
	h_{\floer}(\phi)=\limsup_{k\to \infty}\frac{\log \dim \HF(\phi^k)}{k}\in [0,\infty].
	$$
\end{definition}
We refer to \cite{Sm12, Fe12} for a study of this invariant in different contexts.
Note that $h_{\floer}(\phi)$ does not depend on the choice of representative in the isotopy class $[\phi]\in \pi_0\Symp(W,\lambda)$, and it turns out that $h_{\floer}(\phi)$ is always finite, see Remark~\ref{rem: intro}.
The significance of the positivity of $h_{\floer}(\phi)$ can be highlighted within the context of the mapping class group as follows:
\begin{lemma}\label{lem: entropy_infinite}
If $h_{\floer}(\phi)>0$ for $[\phi]\in \pi_0\Symp(W,\lambda)$, then $[\phi]$ has infinite order in $\pi_0\Symp(W,\lambda)$.
\end{lemma}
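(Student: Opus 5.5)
We argue by contraposition: if $[\phi]$ has finite order in $\pi_0\Symp(W,\lambda)$, then $h_{\floer}(\phi)=0$. So suppose $[\phi]^m=[\mathds{1}]$ for some $m\ge 1$. Every integer $k\ge 1$ can be written as $k=qm+r$ with $0\le r<m$. Then $\phi^k$ is isotopic in $\Symp(W,\lambda)$ to $\phi^r$, because $\phi^{qm}$ is isotopic to the identity through compactly supported exact symplectomorphisms and composition preserves isotopy classes. By the invariance property of fixed point Floer homology (\cite[Theorem~2.34]{Ul17}), we get $\HF(\phi^k)\cong\HF(\phi^r)$. Hence $\dim\HF(\phi^k)$ takes at most the $m$ values $\dim\HF(\phi^0),\dots,\dim\HF(\phi^{m-1})$.

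The main point to check is that each of these dimensions is finite. For each $r$, choose an admissible non-degenerate pair $(H,\bm J)$ for $\phi^r$. The chain complex $\CF(\phi^r;H)$ is generated by $\Fix(\psi_H)$. This set is finite: the fixed points are non-degenerate, hence isolated, and there are none near $\p W$, since there $\psi_H$ is a small positive Reeb-type flow with slope below the minimal Reeb period while $\phi^r$ is the identity. So $M:=\max_{0\le r<m}\dim\HF(\phi^r)<\infty$. For $r=0$ this is the usual $\HF(\mathds{1})\cong H^*(W;\Z_2)$ up to grading, which is finite-dimensional anyway.

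It follows that
$$
\frac{\log\dim\HF(\phi^k)}{k}\le\frac{\log\max(M,1)}{k}\longrightarrow 0 .
$$
If $\dim\HF(\phi^k)=0$, we use whatever convention makes $\log 0$ contribute nothing, e.g.\ replacing $\dim$ by $\max(\dim,1)$ or noting that $-\infty$ terms do not increase the $\limsup$. Therefore $h_{\floer}(\phi)=0$, which contradicts $h_{\floer}(\phi)>0$. So $[\phi]$ has infinite order.

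I expect no real obstacle here. The only subtle point is the finiteness of $\dim\HF(\phi^r)$ for the finitely many residues $r$, which follows from compactness of $W$ together with non-degeneracy and the boundary behaviour of admissible Hamiltonians.
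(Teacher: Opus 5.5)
Your proof is correct and takes essentially the same route as the paper. Both use invariance of $\HF$ under isotopy to replace $\phi^k$ by $\phi^{r}$ with $0\le r<m$, so that $\dim\HF(\phi^k)$ takes only finitely many values, which is incompatible with positive entropy; the paper phrases this as a contradiction with a strictly increasing subsequence $\dim\HF(\phi^{i_k})$, whereas you bound the $\limsup$ directly and also make explicit the finiteness of each $\dim\HF(\phi^r)$, which the paper leaves implicit.
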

\begin{proof}
By the assumption, we can pick a sequence $i_1<i_2<\cdots$ of positive integers such that for any $k\in \N$ we have
$$
\dim \HF(\phi^{i_k})<\dim \HF(\phi^{i_{k+1}}).
$$
Arguing by contradiction, suppose that there exists $\ell>0$ such that $\phi^\ell$ is isotopic to the identity.
For each $k\in \N$, we can write $i_k=\ell p_k +q_k$ for some $p_k\ge 0$ and $0\le q_k<\ell$.
It follows that $\phi^{i_k}$ is isotopic to $\phi^{q_k}$, and hence $\dim \HF(\phi^{i_k})=\dim \HF(\phi^{q_k})$ by the invariance property.
However, this leads a contradiction: the sequence $\{q_k\}$ takes values in the finite set $\{0,1,\dots,\ell-1\}$, implying that $\dim \HF(\phi^{i_k})$ can only take finitely many distinct values, whereas $\dim \HF(\phi^{i_k})$ is strictly increasing.
\end{proof}
\begin{remark}\label{rem: linear_infinite}
The result in Lemma~\ref{lem: entropy_infinite} continues to hold when $[\phi]\in\pi_0\Symp(W,\lambda)$ has linear growth rate, namely $\displaystyle\kappa(\phi):=\limsup_{k\to \infty}\frac{1}{k}\dim \HF(\phi^k) \in (0,\infty)$.
\end{remark}

\subsection{Lagrangian Floer homology and canonical isomorphism}\label{sec: lagrfloer}
We will make use of a canonical isomorphism between the Floer homology of $\phi\in\Symp(W,\lambda)$ and Lagrangian Floer homology to prove the Smith-type inequality for Floer homology, see Theorems~\ref{thm: can_isom} and \ref{thm: smith_floer}.
Since Lagrangian Floer theory for admissible Lagrangians is standard, we recall it only briefly.
For details of the construction and basic properties, we refer to the accounts \cite[Section~5]{KhSe02}, \cite[Section~3.1]{SS}, and \cite[Section~2.1]{KKL18}.

Recall that a Lagrangian $L\subset W$ is called \emph{admissible} if $L$ intersects $\p W$ transversely and the 1-form $\lambda|_L$ is exact and vanishes near $\p L$.
For admissible $L_0,L_1\subset W$ we define the \emph{Lagrangian Floer homology} $\HF(L_0,L_1)$ as follows.
Choose a Hamiltonian $H\colon W\to \R$ that is linear near the boundary with positive slope strictly less than the minimum of the lengths of Reeb chords from $\p L_0$ to $\p L_1$ and a family ${\bm J}=\{J_t\}_{t\in [0,1]}$ of compatible almost complex structures on $W$ that are cylindrical near the boundary.
The chain complex $\CF(L_0,L_1)$ is the $\Z_2$-vector space generated by the intersection points $L_0\cap \phi_H^{-1}(L_1)$, and the differential $\p^{\bm J}$ counts rigid Floer strips for $(H,{\bm J})$ with boundary on $L_0$ and $L_1$.
If $(H,{\bm J})$ is generic so that $L_0\cap \phi^{-1}_H(L_1)$ is transverse and that the moduli spaces defining the differential are cut out transversely, then $\HF(L_0,L_1)$ is well-defined and independent of the pair $(H,{\bm J})$.

Consider now the product $W\times W$ of a Liouville domain $(W,\lambda)$ which is a smooth manifold with corners.
Following \cite[Section~3.d]{Oa06}, we can round the corners to obtain a Liouville domain whose completion is unique up to Liouville isomorphism.
More precisely, choose a hypersurface $\Sigma\subset  W\times W$ such that
\begin{itemize}
	\item $\Sigma$ is contained in a small neighborhood of $(W\times \p W) \cup (\p W\times W)$; and
	\item $\Sigma$ is transverse to the Liouville vector field $Z=\pi_1^*X_\lambda+\pi_2^*X_\lambda$, where $\pi_i\colon W\times W\to W$ is the projection along the $i$th factor.
\end{itemize}
The closure of the component of $(W\times W)\setminus \Sigma$ that has no boundary is a Liouville domain $\widetilde{W\times W}$ equipped with 1-form $\lambda\oplus (-\lambda)$.
We call it a \emph{rounded product Liouville domain}.
\begin{remark}\label{rem: hypersurface_phi}
If $\phi\in \Symp(W,\lambda)$ is given, then we choose $\Sigma$ such that $\supp(\phi\times \phi)$ is contained in $\widetilde{W\times W}$ and is invariant under the involutions $\mathcal{R}_{\mathds{1}_W}(x,y)=(y,x)$ and $\mathcal{R}_\phi(x,y)=(\phi^{-1}(y),\phi(x))$.	
\end{remark}
\begin{lemma}\label{lem: graph_admissible}
For $\phi\in \Symp(W,\lambda)$ the graph Lagrangian
$$
\Gr(\phi)=\{(x,\phi(x))\in W\times W\mid x\in W\}\cap \widetilde{W\times W}
$$ is admissible in $\widetilde{W\times W}$.
In particular, the diagonal $\Delta=\Gr(\mathds{1}_W)\subset \widetilde{W\times W}$ is admissible.
\end{lemma}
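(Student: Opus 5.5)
We must verify the three requirements of admissibility for $\Gr(\phi)$ in the rounded product $\widetilde{W\times W}$ with Liouville form $\Lambda=\lambda\oplus(-\lambda)$, so $d\Lambda=d\lambda\oplus(-d\lambda)$. First, $\Gr(\phi)$ is Lagrangian, since the pullback of $d\Lambda$ under $x\mapsto(x,\phi(x))$ is $d\lambda-\phi^*d\lambda=0$. Second, exactness: the same pullback of $\Lambda$ is $\lambda-\phi^*\lambda$. This is exact by the definition of an exact symplectomorphism, say $\phi^*\lambda-\lambda=df$. Because $\phi$ is supported in $\Int W$, this 1-form vanishes outside a compact subset $K=\supp(\phi)\subset \Int W$. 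Hence $f$ is locally constant on $W\setminus K$, and we normalize $f$ to vanish near $\p W$. (If $W\setminus K$ is disconnected this needs a little care, but we only need vanishing of the 1-form near the boundary, which is automatic.)

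Third, the boundary behaviour. Near $\p\widetilde{W\times W}$ we want $\Gr(\phi)$ to coincide with the diagonal $\Delta$. By Remark~\ref{rem: hypersurface_phi}, $\Sigma$ is chosen so that $\supp(\phi\times\phi)\subset \widetilde{W\times W}$. Shrinking the neighbourhood of $(W\times\p W)\cup(\p W\times W)$ containing $\Sigma$ if needed, we may ensure that $\Sigma\cap\Gr(\phi)$ lies in the region where $x\notin K$, so that $\phi(x)=x$ there. Then near the boundary we have $\Gr(\phi)=\Delta$, and $\Lambda|_\Delta=\lambda-\lambda=0$ vanishes identically. So it suffices to treat $\Delta$.

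The main step is transversality of $\Delta$ with $\Sigma$. The Liouville field $Z$ of $\Lambda$ is $\pi_1^*X_\lambda+\pi_2^*X_\lambda$, where the sign flip in $-\lambda$ is compensated by the sign flip in $-d\lambda$, so the $\lambda$-Liouville field is the same on each factor. This $Z$ is tangent to $\Delta$, since at $(x,x)$ it equals $(X_\lambda(x),X_\lambda(x))$. Since $\Sigma$ is transverse to $Z$ and $Z$ is tangent to $\Delta$, the tangent spaces satisfy $T\Delta+T\Sigma=T(W\times W)$. Hence $\Delta\pitchfork\Sigma$, and $\Delta\cap\widetilde{W\times W}$ is a manifold with boundary $\Delta\cap\Sigma$, with $\Lambda|_\Delta\equiv0$ near it. This transversality is the step that genuinely uses the rounding construction, and it is where I would be careful: one has to confirm that the component chosen is the bounded one, so that $\Delta\cap\Sigma$ is nonempty and compact. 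Because $Z$ is tangent to $\Delta$, the restricted flow also shows $\Delta\cap\widetilde{W\times W}$ is diffeomorphic to $W$.

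The diagonal statement is the special case $\phi=\mathds{1}_W$.
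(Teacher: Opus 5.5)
Your proof is correct, but it takes a more direct route than the paper. The paper writes $\phi^*\lambda=\lambda-df$ and realizes $\Gr(\phi)$ as the fixed locus of the involution $\mathcal{R}_\phi(x,y)=(\phi^{-1}(y),\phi(x))$. It computes $\mathcal{R}_\phi^*(\lambda\oplus(-\lambda))=-(\lambda\oplus(-\lambda))+dF$ with $F(x,y)=f(x)+f(\phi^{-1}(y))$. It then invokes \cite[Lemma~3.1]{KKL18} on fixed loci of anti-symplectic involutions $\rho$ with $\rho^*\lambda=-\lambda$, asserting that the proof there survives the extra exact term.

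You instead verify the conditions by hand:
\begin{itemize}
\item The pullback of $\lambda\oplus(-\lambda)$ under $x\mapsto(x,\phi(x))$ is $\lambda-\phi^*\lambda$, which is exact.
\item Compact support of $\phi$ makes $\Gr(\phi)$ coincide with $\Delta$ near $\Sigma$, where the form vanishes identically.
\item Transversality to $\Sigma$ follows because the product Liouville field $Z=(X_\lambda,X_\lambda)$ is tangent to $\Delta$ while $\Sigma$ is transverse to $Z$.
\end{itemize}

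The paper's packaging ties the lemma to the involutions $\mathcal{R}_{\mathds{1}_W},\mathcal{R}_\phi$ that also govern the choice of $\Sigma$ in Remark~\ref{rem: hypersurface_phi}. However, it delegates the boundary analysis to an external lemma stated for strictly exact involutions. Its intermediate claim that $dF$ vanishes on $\Fix(\mathcal{R}_\phi)$ also cannot hold for the restriction to $\Gr(\phi)$ in general, since $F(x,\phi(x))=2f(x)$. Your computation exhibits exactly what is true and what the definition requires: the restricted form is exact, generally nonzero in the interior, and vanishes near the boundary.

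One step you should make explicit. Choose $\Sigma$ inside $\{(x,y)\mid x\in C \text{ or } y\in C\}$, where $C$ is a collar of $\p W$ disjoint from $K=\supp(\phi)$. Then a point $(x,\phi(x))$ in this set has $x\notin K$ even when only $\phi(x)\in C$, because $\phi(K)=K$. This, rather than the support condition quoted from Remark~\ref{rem: hypersurface_phi}, is what your boundary argument actually uses.
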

\begin{proof}
Write $\phi^*\lambda=\lambda-df$ for some smooth function $f\colon W\to \R$.
We observe that
$$
\mathcal{R}_\phi^*\big(\lambda\oplus(-\lambda)\big) = (-\lambda+df)\oplus(\lambda+d(\phi^{-1})^*f) = \big((-\lambda)\oplus \lambda\big) + dF,
$$
where $F(x,y)=f(x)+f(\phi^{-1}(y))$ is a smooth function on $\widetilde{W\times W}$.
Since $\mathcal{R}_\phi^*F=F$, it follows that $dF|_{\Fix(\mathcal{R}_\phi)}=0$.
As $\Fix(\mathcal{R}_\phi)=\Gr(\phi)$ has a non-empty boundary in $\p( \widetilde{W\times W})$, which is indeed diffeomorphic to $\p W$, the lemma follows from \cite[Lemma~3.1]{KKL18}; while the lemma requires strict exactness, the proof remains valid in this setting.
\end{proof}
In the sequel we shall prove that $\HF(\phi)$ is canonically isomorphic to $\HF(\Delta,\Gr(\phi))$.
Since this is folklore in the closed case \cite[Example~3.8]{Se14_2}, we focus on how the argument adapts to the case of Liouville domains.
\begin{theorem}\label{thm: can_isom}
For every $\phi\in\Symp(W,\lambda)$ there exists a canonical isomorphism
$$
\HF(\phi)\cong \HF(\Delta,\Gr(\phi)).
$$
\end{theorem}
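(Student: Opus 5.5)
The plan is to adapt the standard ``folding'' argument from the closed case \cite[Example~3.8]{Se14_2} and to verify that the data near the boundary behave well. Fix an admissible pair $(H,{\bm J})$ with $H\in\mathcal{H}_\phi$, ${\bm J}\in\mathcal{J}_\phi$. Given a twisted Floer trajectory $u\colon\R\times\R\to W$, define a strip $v\colon \R\times[0,1]\to W\times W$ by
$$
v(s,t)=\big(u(s,-t/2),\,u(s,t/2)\big),
$$
after reparametrising the period $[0,1]$ of $u$ to $[-1/2,1/2]$. Then $v(s,0)=(u(s,0),u(s,0))\in\Delta$. Moreover $v(s,1)=(u(s,-1/2),u(s,1/2))$, and the twisting condition $\phi(u(s,-1/2))=u(s,1/2)$ gives $v(s,1)\in\Gr(\phi)$. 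The map $u$ solves Floer's equation for $(H,{\bm J})$ exactly when $v$ solves the Floer equation on $(W\times W,\omega\oplus(-\omega))$ for the split Hamiltonian $K_t(x,y)=\tfrac12\big(-H_{-t/2}(x)+H_{t/2}(y)\big)$ and the almost complex structure $\widehat J_t=(-J_{-t/2})\oplus J_{t/2}$. The sign changes coming from reversing the $t$-direction in the first factor compensate for the sign of the symplectic form. Conversely, a strip with these boundary conditions unfolds into a twisted trajectory, and the correspondence matches asymptotic orbits $x\in\mathcal{P}(H)$ with Hamiltonian chords from $\Delta$ to $\Gr(\phi)$. It therefore identifies the moduli spaces together with their transversality, since the linearised operators correspond under the same doubling. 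This gives an isomorphism of chain complexes $\CF(\phi;H,{\bm J})\cong \CF(\Delta,\Gr(\phi);K,\widehat{\bm J})$.

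The next step is to check that $(K,\widehat{\bm J})$ is a legitimate choice for Lagrangian Floer homology in the rounded product $\widetilde{W\times W}$. By Lemma~\ref{lem: graph_admissible}, both $\Delta$ and $\Gr(\phi)$ are admissible. To compare with the chosen rounding, pick $\Sigma$ as in Remark~\ref{rem: hypersurface_phi}, invariant under $\mathcal{R}_{\mathds{1}_W}$ and $\mathcal{R}_\phi$. I expect the main obstacle to be that $K$ and $\widehat{\bm J}$ are not a priori of contact type, or linear, near $\Sigma$. In particular, $K$ has the wrong sign ($-H$) in the first factor.

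The way around this is as follows. First choose $H$ of very small slope, $C^2$-small away from $\supp\phi$, and $J_t$ independent of $t$ near $\p W$; this is possible since $\phi$ is the identity near $\p W$. With these choices, near the corner region $\Delta$ and $\Gr(\phi)$ coincide with the diagonal. All chords and strips are then confined to a compact region away from $\Sigma$ by the maximum principle, applied to $u$ in $W$ (which holds because $J_t$ is cylindrical and $H$ is linear there). The unfolding therefore never sees the rounding. Near $\Sigma$ one then interpolates $(K,\widehat{\bm J})$ to admissible data: a linear Hamiltonian with small positive slope and cylindrical $J$ on $\widetilde{W\times W}$. A continuation map for this interpolation is an isomorphism. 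The reason is that the action/energy filtration, together with a no-escape (maximum principle) lemma for the region between the support of the data and $\Sigma$, shows that the complexes literally agree for small slopes. Equivalently, no chords of $\Delta$ to $\Gr(\phi)=\Delta$ exist near $\Sigma$ below the minimal Reeb chord length.

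Finally, canonicity follows by checking that continuation maps on both sides correspond under the folding. A homotopy $(H^s,{\bm J}^s)$ unfolds to a homotopy $(K^s,\widehat{\bm J}^s)$ by the same formula, so the isomorphism commutes with continuation and descends to $\HF(\phi)\cong\HF(\Delta,\Gr(\phi))$, independent of choices.
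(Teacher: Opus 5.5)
Your proposal takes the same route as the paper. You fold each twisted trajectory into a strip with boundary on $\Delta$ and $\Gr(\phi)$ for the split structure $(-J)\oplus J$, note that regularity is preserved, and use the maximum principle on $W$ to confine all orbits and trajectories to $K\times K$ with $K\subset\Int W$, so that $\Sigma$ can be chosen to avoid them. Your interpolation and canonicity remarks add detail that the paper leaves implicit.

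Two formula slips need fixing:
\begin{itemize}
\item The fold must rescale $s$, as in the paper's $\widetilde u$. As written, $v=(u(s,-t/2),u(s,t/2))$ only satisfies $\partial_s v+2\widehat J_t(\partial_t v-X_{K_t})=0$.
\item The relative sign in $K$ is wrong. Time reversal in the first factor cancels the sign change of the symplectic form there. So for the form $(-\omega)\oplus\omega$ compatible with $\widehat J_t$, the folded Hamiltonian is $\tfrac12\bigl(H_{-t/2}(x)+H_{t/2}(y)\bigr)$; for $\omega\oplus(-\omega)$ it is the negative of this. Either way both factors carry the same sign. Your mixed-sign $K$ does not reproduce $\partial_t v$ in either convention, and the ``wrong sign'' obstacle you anticipate does not actually arise.
\end{itemize}
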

\begin{proof}
By the invariance properties, we can assume that $\phi$ is non-degenerate, equivalently, $\Delta$ and $\Gr(\phi)$ intersect transversely.
Since the generators of $\CF(\phi)$ and $\CF(\Delta,\Gr(\phi))$ are in bijection under $\Fix(\phi)\ni x\mapsto (x,x)\in \Delta\cap\Gr(\phi)$, it suffices to show that the differentials coincide.
For generic ${\bm J}=\{J_t\}_{t\in \R}\in\mathcal{J}_{\phi}$ we consider the family $\widetilde{{\bm J}}=\big\{(-J_{\frac{1-t}{2}})\oplus J_{\frac{1+t}{2}}\big\}_{t\in[0,1]}$ of split almost complex structures on $\widetilde{W\times W}$.
The moduli space of twisted Floer trajectories for ${\bm J}$ corresponds bijectively to that of Floer strips for $\widetilde{{\bm J}}$ via
$
u(s,t) \mapsto\widetilde{u}(s,t)=\left(u\big(\frac{s}{2},\frac{1-t}{2}\big),u\big(\frac{s}{2},\frac{1+t}{2}\big)\right)
$.
Moreover, if $u$ is regular, then  so is $\widetilde{u}$, where regularity means that the associated Fredholm operator is surjective.
A priori, $\widetilde{u}$ is contained in $W\times W$, but the argument below shows that it actually lies in $\widetilde{W\times W}$.
Using the assumption $\supp(\phi)\subset W\setminus \p W$ and the maximum principle, we can choose a compact region $K\subset W\setminus\p W$ that contains all fixed points of $\phi$ and all twisted Floer trajectories contributing to $\HF(\phi)$.
We assume that the hypersurface $\Sigma\subset W\times W$ is chosen to be disjoint from $K\times K$, while still satisfying all of our standing assumptions.
With this choice, the correspondence above ensures that every $\widetilde{u}$ lies in $\widetilde{W\times W}$, which finishes the proof.
\end{proof}

\section{Local-to-global identity for Floer-theoretic entropy}\label{sec: LtoG}
Consider a Liouville subdomain $V$ of a Liouville domain $(W,\lambda)$, that is, 
$V$ is a codimension~0 compact submanifold such that $(V,\lambda|_V)$ itself is a Liouville domain.
We abbreviate
$$
\Symp(W,\lambda;V)=\{\phi\in\Symp(W,\lambda)\mid \supp(\phi) \subset V\setminus \p V\}.
$$
For $\phi\in \Symp(W,\lambda;V)$ the restriction satisfies $\phi|_V\in \Symp(V,\lambda)$.
Conversely, any $\phi_V\in \Symp(V,\lambda)$ can be extended to $\phi\in \Symp(W,\lambda;V)$ by acting as the identity on the complement~$W\setminus \Int V$.

\begin{example} A typical example arises from an $A_k$-configuration $(L_1,\dots,L_k)$ of Lagrangian spheres.
Choose a compact neighborhood $V$ of $L_1\cup\cdots\cup L_k$ that is a Liouville subdomain of $(W,\lambda)$, and for each $i$ choose exact Dehn twists $\tau_i\colon V\to V$ along $L_i$ with $\supp (\tau_i)\subset V\setminus \p V$.
Then every product of the Dehn twists $\tau_1,\dots,\tau_k$ is contained in $\Symp(W,\lambda;V)$.
\end{example}

The main goal of this section is to prove the following core result.
This identity is easily obtained when considering topological entropy \cite[Proposition~3.1.7]{KaHa95}.
From the view of Floer theory, however, it is not obvious, since we need to relate $\HF(\phi)$ and $\HF(\phi|_V)$.
\begin{theorem}[Local-to-global identity]\label{thm: localtoglobal}
For every $\phi\in \Symp(W,\lambda;V)$ we have $$
h_{\floer}(\phi)=h_{\floer}(\phi|_V).
$$	
\end{theorem}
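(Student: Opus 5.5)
The plan is to compare $\HF(\phi^k)$ with $\HF(\phi|_V^k)$ for every $k$, and to show that they differ by a quantity that grows at most linearly in $k$. Since $\phi^k\in\Symp(W,\lambda;V)$ and $(\phi^k)|_V=(\phi|_V)^k$, it suffices to prove a uniform estimate of the form
$$
\bigl|\dim \HF(\psi)-\dim \HF(\psi|_V)\bigr|\le C
$$
for every $\psi\in\Symp(W,\lambda;V)$, where $C$ depends only on $(W,V)$. For example, $C=\dim H_*(W,V;\Z_2)+\dim H_*(W;\Z_2)$ would do. Once we have this bound, dividing $\log$ by $k$ and taking $\limsup$ gives equality of the entropies. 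When $\dim\HF$ stays bounded, both entropies are $0$. Otherwise $\log(a+C)-\log a\to0$ along any subsequence where $a\to\infty$.

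To obtain the estimate, I would build a long exact sequence (an exact triangle)
$$
\cdots\to \HF(\psi|_V)\to \HF(\psi)\to H_{*}(W,V;\Z_2)\to\cdots,
$$
or an analogous sequence whose third term is independent of $\psi$. The construction works at chain level with a Hamiltonian adapted to the splitting.

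\begin{itemize}
\item Choose $H\in\mathcal{H}_\psi$ that is $C^2$-small and time-independent on $W\setminus \Int V$, where $\psi$ is the identity. It should be linear with tiny positive slope in a collar of $\p V$, then a small Morse function on $W\setminus V$, and linear near $\p W$.
\item The twisted orbits then split into two groups. The first group consists of those inside $V$; these are exactly the generators of $\CF(\psi|_V;H|_V)$. The second group consists of the critical points of the Morse function on $W\setminus V$.
\item Using an action filtration, arrange the inner generators to have smaller action than the outer ones. Concretely, make $H$ large on $W\setminus V$ relative to $V$, with the collar chosen so that no new orbits appear there. The inner generators then span a subcomplex.
\item Take $\bm J$ cylindrical near $\p V$. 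By the maximum principle of Section~\ref{sec: fixedfloer} (the same one used in Theorem~\ref{thm: can_isom}), trajectories between inner orbits stay in $V$. Hence this subcomplex is exactly $\CF(\psi|_V)$.
\item The quotient complex is generated by Morse critical points on $W\setminus V$. Since $\psi$ is the identity there and $H$ is $C^2$-small and autonomous, Floer's standard argument identifies its differential with the Morse differential. Its homology is therefore $H_*(W\setminus \Int V,\p V)$, up to conventions, and in any case has dimension bounded independently of $\psi$.
\end{itemize}

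The step I expect to be hardest is the last one: showing that the quotient differential really is the Morse differential. The difficulty is that the trajectories defining it might enter $V$, where $\psi$ is nontrivial and twisting occurs. I would handle this with an action/energy argument. Connecting trajectories between outer generators have energy equal to an action difference, which can be made arbitrarily small by scaling the Morse function. A trajectory entering $V$ past the collar would need energy bounded below by a monotonicity or neck estimate. Together these show that the trajectories remain in the region where everything is untwisted and autonomous.

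Even without identifying the quotient differential exactly, the long exact sequence of the pair (subcomplex, quotient) gives what we need. The number of quotient generators is a constant $C$, so $\bigl|\dim\HF(\psi)-\dim\HF(\psi|_V)\bigr|\le C$. This weaker statement is all the entropy argument requires, so the crucial points reduce to the subcomplex property and the confinement of trajectories inside $V$ via the maximum principle.
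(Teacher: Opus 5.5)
\emph{Verdict: there is a genuine gap, in the step that makes $\CF(\psi|_V)$ a subcomplex.}

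\textbf{What matches the paper.} Your overall architecture is the paper's. You build a short exact sequence $0\to\CF(\psi|_V)\to\CF(\psi)\to Q\to 0$ whose quotient has a number of generators independent of $\psi$, and then use the rank inequalities \eqref{eq: rankest}. You are right that identifying the quotient differential with the Morse differential is not needed. Confining trajectories between inner orbits to $V$ is the no-escape Lemma~\ref{lem: noescape}.

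\textbf{The gap.} The subcomplex step must exclude trajectories from an inner twisted orbit to an outer critical point. An action filtration cannot do this under your own constraints on $H$.
\begin{enumerate}
\item \emph{The jump of $H$ across the collar is bounded.} In a collar $[1,1+\delta]\times\p V\subset W\setminus\Int V$ with $H=h(r)$, one has $X_H=h'(r)R$, where $R$ is the Reeb field. Admissibility of $H|_V$ forces $h'$ to start in $(0,\min\Spec(\p V,\lambda|_{\p V}))$. To create no new $1$-periodic orbits, $h'$ must then avoid $\Spec(\p V,\lambda|_{\p V})\cup\{0\}$. Hence $H$ rises across the collar by less than $\delta\cdot\min\Spec(\p V,\lambda|_{\p V})$, a bound independent of $\psi$. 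With your ``tiny slope'' the rise is tiny.
\item \emph{The inner actions are not bounded.} The actions of the inner generators are essentially the values at fixed points of the primitive $F_\psi$ of $\psi^*\lambda-\lambda$ (normalized to vanish outside $\supp\psi$). These values can have either sign. For $\psi=\phi^k$ one has $F_{\phi^k}=\sum_{j=0}^{k-1}F_\phi\circ\phi^j$. Already for a Dehn twist $\tau$, the actions of the twisted orbits of $\tau^k$ spread over an interval of length of order $k$.
\item \emph{Consequence.} In general no admissible $H$ of your form separates inner from outer generators by action, and certainly not uniformly in $k$. If you let the collar slope grow to force the separation, you create generators from Reeb orbits of $\p V$ of period up to order $k$. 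Their number depends on $k$, possibly exponentially, so the uniform constant $C$ is lost.
\end{enumerate}

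\textbf{How the paper closes this step.} It uses no actions here; the step is Lemma~\ref{lem: constant_end}, part~\eqref{item: 1}, adapted from Cieliebak--Oancea.
\begin{enumerate}
\item \emph{Setup.} The Hamiltonian $K$ bends concavely in the collar of $\p V$ down to slope $0$. On the outer region it equals $c+\epsilon^2 f$ with $c$ small. One argues by contradiction as $\epsilon\to 0$.
\item \emph{Limit.} A sequence of trajectories from inner orbits to critical points of $f$ converges to a broken twisted trajectory with one gradient fragment.
\item \emph{Fragment starting in the interior of the outer region.} The last Floer level is holomorphic near its end, because $K$ is constant there, so the puncture is removable. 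The no-escape energy estimate then applies to the part of that curve in the outer region, which is untwisted since $\supp\phi\subset V$. It forces this part into $\p V$, a contradiction.
\item \emph{Remaining cases.} They are excluded by the direction of $\nabla f$ along $\p V$, and by a second application of the no-escape lemma near $\p W$.
\end{enumerate}
This argument never compares actions of inner and outer generators. So it works for every $\phi^k$ with the same quotient, generated by the critical points of a fixed $f$. That is exactly the uniformity your estimate needs.
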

This theorem readily follows from the next result:
\begin{theorem}[Exact triangle]\label{thm: exact_triangle}
For every $\phi\in \Symp(W,\lambda;V)$ there exists an exact triangle
\begin{equation}\label{eq: exact_tri}
\begin{tikzcd}[row sep=1.5cm]
\HF(\phi|_V)  \arrow[r,"i"] & \HF(\phi) \arrow[d,"j"]\\
 & \arrow[ul] \Ho(W\setminus V)  
\end{tikzcd}
\end{equation}
where $\Ho(W\setminus V)=\bigoplus_{i=0}^{\dim W} \Ho_i(W\setminus V)$ denotes the total singular homology of $W\setminus V$.
\end{theorem}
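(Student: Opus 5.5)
We will construct a Floer chain complex for $\phi$ on $W$ that contains a subcomplex computing $\HF(\phi|_V)$ and whose quotient computes $\Ho(W\setminus V)$; the long exact sequence of this short exact sequence is the triangle \eqref{eq: exact_tri}. Fix $\phi\in\Symp(W,\lambda;V)$ and choose an admissible Hamiltonian $H\in\mathcal{H}_\phi$ of the following form. On $V$ it is an admissible Hamiltonian for $\phi|_V$: small and $C^2$-small perturbing near $\p V$, and linear of small positive slope $a_V<\min\Spec(\p V,\lambda|_{\p V})$ in the collar of $\p V$. On $W\setminus V$, where $\phi=\mathds{1}$, we take $H$ autonomous and $C^2$-small Morse: increasing in the collar coordinate $r$ of $\p V$ out to some level, then extended to a Morse function on $W\setminus\Int V$ whose gradient points outward along $\p W$ and inward, i.e.\ into $V$, along $\p V$. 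The slope near $\p W$ is also below the minimal Reeb period. The periodicity condition $\phi^*H_{t+1}=H_t$ holds automatically outside $V$, since $\phi$ is the identity there. Then $\mathcal{P}(H)$ splits as $\mathcal{P}_V\sqcup\mathcal{P}_{\mathrm{out}}$. Here $\mathcal{P}_V$ consists of the twisted orbits generating $\CF(\phi|_V;H|_V)$, and $\mathcal{P}_{\mathrm{out}}$ consists of the constant orbits at the critical points of the Morse function on $W\setminus V$.

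Next, we show that the generators split by action. Normalize $H$ with a large constant shift on $W\setminus V$, or compare actions directly using the exact primitive of $\phi^*\lambda-\lambda$. The aim is that every orbit in $\mathcal{P}_{\mathrm{out}}$ has action strictly larger than (or, depending on convention, smaller than) every orbit in $\mathcal{P}_V$. The differential decreases action. So $\CF_V:=\bigoplus_{x\in\mathcal{P}_V}\Z_2\langle x\rangle$ is a subcomplex, or a quotient complex, of $\CF(\phi;H)$. Direction matters here because of the triangle maps: $i$ goes from $\HF(\phi|_V)$ to $\HF(\phi)$, so $\CF_V$ should be a subcomplex. If the action argument is awkward, one can instead use positivity of intersections with the hypersurface $\p V$, via a stretching or maximum principle argument for $J$ cylindrical near $\p V$. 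The point is that Floer trajectories can cross $\p V$ only in one direction, from outside to inside.

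Two identifications then remain. First, the differential on the subcomplex $\CF_V$ must agree with the differential of $\CF(\phi|_V)$. For $\bm J$ cylindrical near $\p V$, the integrated maximum principle (in the form of Abouzaid--Seidel, adapted to twisted trajectories using $\phi=\mathds{1}$ near $\p V$) shows that trajectories between orbits in $V$ never leave $V$. Second, the quotient complex must be the Morse complex of the Morse function on $W\setminus V$. Since $H$ is $C^2$-small and autonomous and $\phi=\mathds{1}$ there, Floer's argument identifies rigid trajectories with gradient flow lines, after choosing $\bm J$ $t$-independent on $W\setminus V$. With the gradient pointing out along $\p W$ and into $V$, this Morse complex computes $\Ho(W\setminus V)$, up to the standard grading shift, which is invisible in the ungraded total homology. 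The triangle then follows.

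The main obstacle is the confinement step. We must show that trajectories starting in $\mathcal{P}_V$ cannot leave $V$ and come back, and that trajectories between outside orbits do not dip into $V$ in a way that spoils the Morse identification. This requires a maximum principle along the interior hypersurface $\p V$ that holds for twisted trajectories. The twisting is harmless because $\supp\phi\subset\Int V$, so near $\p V$ the equation is an ordinary Floer equation with cylindrical $J$ and $H$ a function of $r$. I would first try the integrated maximum principle for $H=h(r)$ with $h'\ge0$, which gives exactly the one-directional crossing needed.
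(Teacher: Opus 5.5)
Your architecture matches the paper's: a short exact sequence of complexes, the no-escape lemma (your integrated maximum principle) for trajectories with both ends in $V$, and a Salamon--Zehnder identification of the quotient with a Morse complex. There is, however, a real gap in the confinement step, and the Hamiltonian as you describe it is inconsistent near $\p V$.

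\textbf{The confinement gap.} The integrated maximum principle does not give ``one-directional crossing''. It only controls pieces of $u$ lying on the \emph{outer} side of a level $\{r=r_0\}$ near $\p V$. Compact such pieces are excluded; this is the paper's Lemma~\ref{lem: noescape}. If you keep the asymptotic term, a piece containing the positive end at an outside critical point $p$ satisfies $E\le h(r_0)-r_0h'(r_0)-H(p)$. This is negative when $h'\ge 0$ and $H(p)>h(r_0)$, so it does rule out Type~(II) and gives the subcomplex. That is a legitimate alternative to the paper's limiting argument in Lemma~\ref{lem: constant_end}(1). By contrast, a ``large constant shift'' is not available in general: the jump of $H$ across a collar of $\p V$ is bounded by the collar width times the minimal Reeb period, while the actions of twisted orbits in $V$ depend on $\phi$.

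Now take a trajectory between two outside critical points that dips into $V$ (Type~(III)). The pieces in $V$ are compact with boundary on $\{r=r_0\}$, and for them the boundary term has the wrong sign to give a contradiction. This is exactly how symplectic-homology differentials enter a filling. Only dips that separate the two ends are killed by the asymptotic estimate. So the identification of the quotient differential with the Morse differential is unproved.

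The paper closes this with Lemma~\ref{lem: constant_end}(2), the Cieliebak--Oancea device. On the cobordism it takes $H=c+\epsilon^2 f$, so trajectories between constant orbits there have energy $\epsilon^2(f(p_-)-f(p_+))\to 0$. By Morse--Bott compactness, a sequence of dipping trajectories with $\epsilon_\nu\to 0$ then converges to pure gradient fragments in $\Int V'$, a contradiction. A fixed $C^2$-small Morse function gives you no such small-energy mechanism, so you need to build one in.

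\textbf{The gradient direction and the third term.} $H|_V$ has positive slope at $\p V$ and you take $H$ increasing in $r$ outside, so $\nabla H$ points \emph{out of} $V$ along $\p V$, not into it. Forcing it into $V$ would create a ridge of $H$ near $\p V$ whose critical points are extra generators outside $\mathcal{P}_V\sqcup\mathcal{P}_{\mathrm{out}}$.

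With the consistent choice, $\p V$ is the bottom level and $\p W$ the top level of $H|_{W\setminus\Int V}$. The quotient then computes $\Ho(W\setminus\Int V,\p V)\cong\Ho(W,V)$, not $\Ho(W\setminus V)$. For $\phi=\mathds{1}$ the triangle is just the long exact sequence of the pair $(W,V)$. For concentric balls $V\subset W$ the map $i$ is an isomorphism, which forces the third term to vanish, whereas $\dim\Ho(W\setminus V)=2$.

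The paper's own $f$ points into $V'$ along $\p^-V'$, so its construction likewise produces $\Ho(W,V)$, and the label in the statement should be read that way. This is harmless for Theorem~\ref{thm: localtoglobal}, which only uses that the third term is finite-dimensional.
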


\begin{remark}\label{rem: local_to_global}
	Theorem~\ref{thm: exact_triangle} also establishes the local-to-global identity for other types of growth;
	in particular, $\kappa(\phi)=\kappa(\phi|_V)$ holds.
Recall Remark~\ref{rem: linear_infinite} regarding the linear growth rate $\kappa(\phi)$.
	This result is specific to Liouville domains and may fail for closed symplectic manifolds, as shown in Example~\ref{ex: failure}.
\end{remark}
The proof of Theorem~\ref{thm: exact_triangle} is presented in Section~\ref{sec: proof_localtoglobal}.
We first use Theorem~\ref{thm: localtoglobal} to show that the Floer-theoretic entropy of a single Dehn twist on a Liouville domain vanishes.
\begin{example}\label{ex: singledehn}
Consider the disc cotangent bundle $D^*S^n$ equipped with the canonical Liouville form $\lambda_0$, and let $\tau\in \Symp(D^*S^n,\lambda_0)$ be a Dehn twist along the zero section $L$.
Since $\dim \HF_*(\tau^{2k})$ grows linearly as $k\to \infty$, see \cite[Proposition~4.7]{Ul19} (it computes linear growth of a fibered twist which is isotopic to $\tau^2$), it follows that $h_{\floer}(\tau^2)=0$.
We now show $h_{\floer}(\tau)=h_{\floer}(\tau^2)=0$.
\cite[Theorem~4.2]{Se01} shows that for $\phi\in\Symp(W,\lambda)$ there exists an exact triangle
\begin{equation}\label{eq: seidel_exact_tri}
\begin{tikzcd}[row sep=1.5cm]
\HF(\phi \tau)  \arrow[r] & \HF(\phi) \arrow[d]\\
 & \arrow[ul] \HF(\phi(L),L)  
\end{tikzcd}
\end{equation}
Here $\HF(\cdot,\cdot)$ is the Lagrangian Floer homology of exact Lagrangians that are disjoint from the boundary, see \cite[Section~3]{Se01_2}.
Putting $\phi=\tau^k$ together with the fact that $\tau(L)=L$, we obtain that for all $k\in \N$,
\begin{equation*}
\begin{split}
	\dim \HF(\tau^{k+1}) &+ \Lambda \ge \dim \HF(\tau^{k}) \\
	\dim \HF(\tau^{k}) &+ \Lambda \ge \dim \HF(\tau^{k+1}),
\end{split}	
\end{equation*}
where $\Lambda:=\dim \HF(L,L)$ is finite, yielding the inequalities 
\begin{equation*}
\begin{split}
	\dim \HF(\tau^{2k}) &+ k\Lambda \ge \dim \HF(\tau^{k}) \\
	\dim \HF(\tau^{k}) &+ k\Lambda \ge \dim \HF(\tau^{2k}).
\end{split}	
\end{equation*}
Therefore, $h_{\floer}(\tau)=h_{\floer}(\tau^2)$ as desired.
It follows from the local-to-global identity that every Dehn twist $\tau\in \Symp(W,\lambda)$ in a Liouville domain must have $h_{\floer}(\tau)=0$ since $D^*S^n$ exactly embeds into $W$.
\end{example} 
However, the local-to-global identity may fail if a Liouville domain is replaced by a closed symplectic manifold, as we now explain.

\begin{example}\label{ex: failure}
Let $\overline{\Delta}=\{(x,-x)\mid x\in S^2\}$ be the antidiagonal sphere of a monotone symplectic quadric surface $(M,\ow)=(S^2\times S^2,\ow_{S^2}\oplus\ow_{S^2})$, where $\ow_{S^2}$ denotes an area form on $S^2$.
Consider a Dehn twist $\tau\in \Symp(M,\ow)$ along $\overline{\Delta}$, supported in a neighborhood $U$ of~$\overline{\Delta}$, which is symplectomorphic to a neighborhood of the zero section in $T^*\overline{\Delta}$.
By a result of Gromov, $[\tau]\in \pi_0 \Symp(M,\ow)$ has finite order, specifically two.
This illustrates the failure of the local-to-global identity for linear growth; if the property  $\kappa(\tau)=\kappa(\tau|_{T^*\overline{\Delta}})$ held, then $[\tau]\in \pi_0\Symp(M,\ow)$ would have linear growth, as $\kappa(\tau_{T^*\overline{\Delta}})>0$ from Example~\ref{ex: singledehn}, and hence would have infinite order, see Remark~\ref{rem: linear_infinite}.
\end{example}

\subsection{Restrictions on twisted Floer trajectories}\label{sec: restrictions_traj}
To prove Theorem~\ref{thm: exact_triangle}, we utilize the results of \cite[Section~2.3]{CiOa18}, which impose restrictions on Floer trajectories traversing both $\Int V$ and $W\setminus \Int V$.
Specifically, we exclude the following three types of twisted Floer trajectories (see Figure~\ref{fig: rest_traj}):
\begin{itemize}
	\item[(I)] with both asymptotes in $V$, but intersecting $W\setminus \Int V$.
	\item[(II)] from a twisted orbit in $\Int V$ to a constant orbit in $W\setminus \Int V$.
	\item[(III)] with both asymptotes in $W\setminus \Int V$, but intersecting $\Int V$.
\end{itemize}
\begin{figure*}[t]
    \centering
\begin{overpic}[width=8cm]{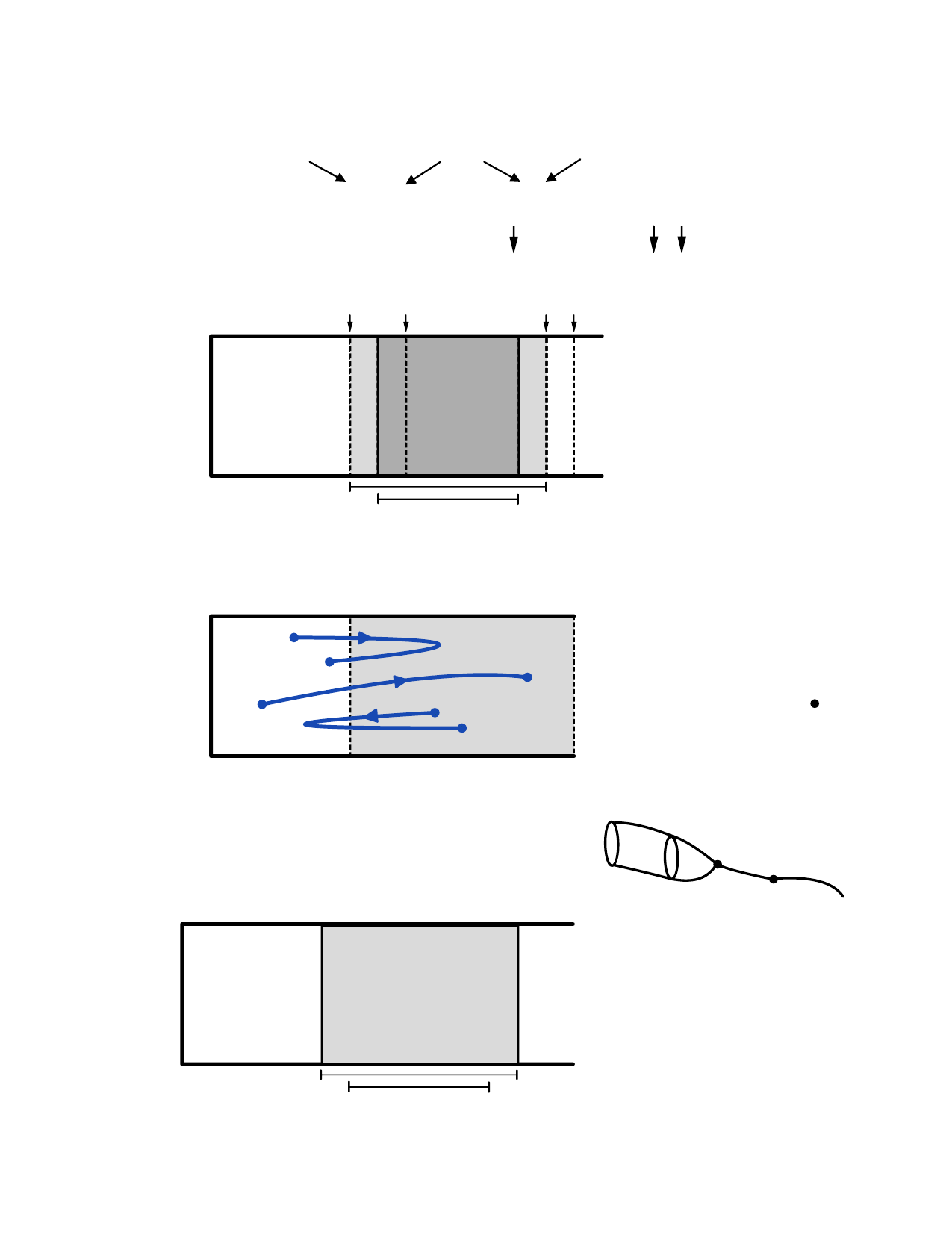}
\put(35, 75){\small (I)}
\put(15, 40){\small(II)}
\put(145, 30){\small(III)}
\put(10, 10){\small$V$}
\put(175, 10){\small$W \setminus \text{int}V$}
\end{overpic}
    \caption{The three types of trajectories to exclude.}
    \label{fig: rest_traj}
\end{figure*}
We start with the lemma asserting that any twisted Floer trajectory with asymptotic orbits in $V$ must stay in $V$, which excludes trajectories of Type (I) .
\begin{lemma}[No escape lemma]\label{lem: noescape}
Let $H\in \mathcal{H}_\phi$ and ${\bm J}\in \mathcal{J}_\phi$ such that $H=h(r)$ in cylindrical coordinates near $\p V$ and ${\bm J}$ is cylindrical near $\p V$.
If both asymptotes of a twisted Floer trajectory $u\colon \R\times \R\to W$ are contained in $V$, then $u$ is entirely contained in $V$.
\end{lemma}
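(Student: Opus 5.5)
We argue by contradiction using the integrated maximum principle of Abouzaid--Seidel, in the form adapted by Cieliebak--Oancea \cite[Section~2.3]{CiOa18}, adjusted to the twisted setting. First we note that $\phi$ is the identity near $\p V$, since $\supp(\phi)\subset V\setminus\p V$. Hence the periodicity conditions $\phi^*H_{t+1}=H_t$ and $\phi^*J_{t+1}=J_t$ say that, near $\p V$, $H$ and ${\bm J}$ are genuinely $1$-periodic in $t$. Consider a twisted Floer trajectory $u$ whose asymptotes lie in $V$ but which meets $W\setminus V$. After shrinking $V$ slightly along the Liouville flow, to a level $\{r=r_0\}$ in the collar near $\p V$ that is a regular value of $r\circ u$, we look at
$$
S=u^{-1}\big(W\setminus \Int V_{r_0}\big)\subset \R\times\R .
$$
By the equivariance $\phi(u(s,t))=u(s,t+1)$, and because $\phi$ preserves $W\setminus \Int V$ pointwise, $S$ is invariant under $t\mapsto t+1$. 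We therefore pass to the quotient $\bar S\subset\R\times S^1$. The asymptotes lie in $\Int V_{r_0}$, so $\bar S$ is a compact surface with boundary $\p\bar S = u^{-1}(\{r=r_0\})/\Z$. On $\bar S$ the map $u$ descends to an honest (untwisted) Floer solution of a $1$-periodic Hamiltonian, namely $h(r)$ near $\p V$ and something arbitrary farther out.

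Next we compute the energy of $u|_{\bar S}$ with the identity
$$
E(u|_{\bar S})=\int_{\bar S}|\p_s u|^2\,ds\,dt=\int_{\bar S}u^*d\lambda - d(H_t(u))\wedge dt .
$$
There is one caveat. On $W\setminus\Int V$ we have $\phi=\mathds{1}$, but $\phi^*\lambda-\lambda=df$ may still be nonzero there. Since $\phi$ is the identity on this region, $f$ is locally constant there. Consequently $\lambda$ is unambiguously defined along $u|_S$, and Stokes' theorem applies on the quotient cylinder. Applying Stokes, the energy becomes
$$
\int_{\p\bar S}u^*\lambda - H_t(u)\,dt ,
$$
where $\p \bar S$ carries the boundary orientation. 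On $\p\bar S$ we have $r=r_0$, $\lambda = r_0\alpha$, and $H=h(r_0)$. Using the Floer equation together with the cylindricality relation $J^*\lambda=dr$, we rewrite $u^*\lambda$ along $\p\bar S$ as $-r_0\,dr(\p_s u)\,dt$-type terms plus $r_0h'(r_0)\,dt$. Following \cite[Lemma~2.2]{CiOa18}, the total comes out as
$$
\big(r_0h'(r_0)-h(r_0)\big)\int_{\p\bar S}dt \;-\;\int_{\p\bar S} dr\circ du\circ j .
$$
The last term is $\le 0$ because $r$ increases toward the outside of $\bar S$. The first term vanishes, because $\int_{\p\bar S}dt=0$: the boundary is null-homologous in the cylinder, as both ends of the cylinder lie outside $\bar S$. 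Thus $E(u|_{\bar S})\le 0$, so $\p_s u\equiv0$ on $\bar S$. Then $u$ is $s$-independent there, hence equals a Hamiltonian orbit on an open set, and by unique continuation the whole of $u$ is $s$-independent. This contradicts the assumption that its asymptotes are distinct orbits (or, in the $s$-independent case, contradicts that $u$ meets $W\setminus V$ while its asymptotes do not).

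The main obstacle will be the sign bookkeeping in the boundary integral and verifying that the twisted quotient is legitimate. In particular we must check that $u^*\lambda$ really descends to $\bar S$ and that no correction from $df$ enters the computation. I expect this to follow from the support condition on $\phi$, but it is precisely the point where the argument departs from \cite{CiOa18}. To remove the regular-value issue, we should also choose $r_0$ generic by Sard's theorem in the collar.
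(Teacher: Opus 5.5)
Your proposal is correct and is essentially the paper's argument. The paper also reduces to the integrated maximum principle of \cite[Lemma~2.2]{CiOa18}, noting that $\supp(\phi)\subset V\setminus\p V$ makes $u$ untwisted on $u^{-1}(W\setminus\Int V)$, and you have written out the quotient-cylinder energy computation that the paper delegates to Cieliebak--Oancea. Minor points: the boundary term is $\le 0$ because $r\circ u$ increases \emph{into} $\bar S$, not toward its outside; when shrinking to $r_0<1$ you should note that nondegeneracy of $H$ forbids twisted orbits in the collar where $H=h(r)$, so the asymptotes do lie in $\Int V_{r_0}$ (the paper avoids this by taking $r_0>1$); and unique continuation is unnecessary, since $\p_s u=0$ on $\bar S$ lets you slide each point horizontally to $\p\bar S$, giving $u(\bar S)\subset\{r=r_0\}$ directly.
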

\begin{proof}
The argument is identical to that of \cite[Lemma~2.2]{CiOa18}, with $W$ replacing $\widehat{W}$.
We argue by contradiction that $u$ leaves $V$.
After replacing $V$ by $\{r\le r_0\}$ for $r_0>1$ close to~1, we assume that $u$ leaves $V$ and is transverse to $\p V$.
Crucially, $u$ restricted to the compact smooth surface $S=u^{-1}(W\setminus \Int V)$ with boundary on $\p V$ is untwisted, since $\supp(\phi)\subset V$.
The rest of the argument, especially the energy estimate, relies only on the domain $W\setminus \Int V$.
We thus confirm that $u|_S$ is entirely contained in $\p V$, and the lemma follows without modification.
\end{proof}
Next, we rule out trajectories of Types (II) and (III) by adapting the argument of \cite[Lemma~2.5]{CiOa18} to the twisted case.
We explain the geometric setup and refer to \cite[Section~2.1]{CiOa18} for relevant definitions.

Consider a \emph{Liouville pair} $(W',V',\lambda')$, that is, $(V',\lambda'|_{V'})$ is a Liouville cobordism in a Liouville domain $(W',\lambda')$ such that 
$$
W'=W'_{\bottom}\circ V' \circ W'_{\topp},$$
where $W'_{\bottom}$ is a Liouville domain, $W'_{\topp}$ is a Liouville cobordism, and $\circ$ denotes the composition of Liouville cobordisms.
We write $\p^\pm V'$ for the negative and positive boundary of $V'$, respectively.
Additionally, suppose that $\phi' \in \Symp(W',\lambda';W'_{\bottom})$ is given.
Since $\supp (\phi')\subset W'_{\bottom}$, the map $\phi'$ is the identity on $V'\circ W'_{\topp}$.\\

\noindent Let $H'\colon \R\times W'\to \R$ be a Hamiltonian which has the following properties:
\begin{itemize}
	\item $H'\equiv c>0$ is constant on $V'$.
	\item $H'$ is strictly concave and strictly convex as a function of $r$ on $[1-\epsilon, 1]\times \p^-V'$ and $[1, 1+\epsilon]\times \p^+V'$, respectively.
	\item $H'$ is linear in $r$ on $(1-2\epsilon,1-\epsilon]\times \p^- V'$ and $(1+\epsilon,1+2\epsilon]\times \p^+ V'$, with positive slopes strictly less than $\Spec(\p^- V',\lambda'|_{\p^- V'})$ and $\Spec(\p^+ V',\lambda'|_{\p^+ V'})$, respectively.
\end{itemize}
Consider a Morse function $f\colon V'\to \R$ with the following properties:
\begin{itemize}
	\item $f$ is a function of $r$ near $\p V'$, and $\p^\pm V'$ are regular level sets; and
	\item The gradient of $f$ points inside and outside $V'$ along $\p^-V'$ and $\p^+V'$, respectively.
\end{itemize}
For $\epsilon>0$ small enough the \emph{$\epsilon$-thickening} of $V'$ inside $W'$ is denoted by
$$
V'_\epsilon=\big([1-\epsilon,1]\times \p^- V'\big)\cup V' \cup \big([1,1+\epsilon]\times \p^+ V'\big) \subset W'.
$$
Let $H_{f,\epsilon}'\colon \R\times W'\to \R$ be an admissible Hamiltonian such that
\begin{itemize}
	\item it is equal to $c+\epsilon^2f$ on $V'$ and to $H'$ outside $V'_\epsilon$; and
	\item it is strictly concave and strictly convex as a function of $r$ on $[1-\epsilon,1]\times \p^-V'$ and $[1,1+\epsilon]\times \p^+V'$, respectively.
\end{itemize}
Note that for $\epsilon>0$ small enough every twisted orbit of $H_{f,\epsilon}'$ contained in $V'$ is a constant orbit given by a critical point of $f$.\\

\noindent Let $\bm{J}'=\{J_t'\}_{t\in \R}$ be an admissible family of almost complex structures on $W'$ which are time-independent on $V'$, cylindrical near $\p V'$, and such that the gradient flow of $f$ is Morse--Smale with respect to the Riemannian metric $g_{\bm{J}'}=d\lambda'(\cdot, J'\cdot)$.

Now we establish the lemma that will exclude trajectories of Types (II) and (III).
\begin{lemma}\label{lem: constant_end}
For $\epsilon>0$ small enough the following holds.
\begin{enumerate}
	\item \label{item: 1} There is no twisted Floer trajectory for $(H_{f,\epsilon}',{\bm J}')$ from a twisted orbit in $W'_{\bottom}$ to a constant orbit in $V'$.
	\item \label{item: 2}  If both asymptotes of a twisted Floer trajectory for $(H_{f,\epsilon}',{\bm J}')$ are contained in $V'$, then it is entirely contained in $V'$.
\end{enumerate}
\end{lemma}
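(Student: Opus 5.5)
Both parts will be proved by following the proof of \cite[Lemma~2.5]{CiOa18}. The one new ingredient is the same observation used in Lemma~\ref{lem: noescape}: since $\supp(\phi')\subset W'_{\bottom}$, a twisted Floer trajectory $u$ becomes untwisted once restricted to the preimage of $V'\circ W'_{\topp}$. The pieces of $u$ that lie over $V'_\epsilon$ therefore satisfy an ordinary Floer equation on a compact surface with boundary. To these pieces the energy identities and the action computations of \cite{CiOa18} apply verbatim.

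The first step is to record the actions of the relevant orbits. Write $\phi'^*\lambda'=\lambda'-dg$. Because $\phi'$ is the identity outside $W'_{\bottom}$, the primitive $g$ is locally constant there, and I will normalize it to vanish on $V'\circ W'_{\topp}$. Then the twisted action functional
\[
A(x)=-\int_0^1 x^*\lambda' - \int_0^1 H'_{f,\epsilon}(t,x(t))\,dt + g(x(0))
\]
(up to sign convention) makes the Floer energy of a trajectory equal to the action difference of its ends. A constant orbit at a critical point $p$ of $f$ in $V'$ has action $-c-\epsilon^2 f(p)$. Twisted orbits in $W'_{\bottom}$ come in two kinds. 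Some lie in the concave collar $[1-\epsilon,1]\times\p^-V'$; these are nonconstant Reeb-type orbits with action of the form $r h'(r)-h(r)$. The others lie deeper in $W'_{\bottom}$, where $H'$ is below the linear region. Because $H'$ is concave on the collar and the slopes are below the spectrum, a direct estimate shows that all these actions exceed $-c-\epsilon^2\max f$ for $\epsilon$ small. Part~\ref{item: 1} in the case where $u$ stays away from the collar then follows immediately: a trajectory must go from higher to lower action, or the reverse, and the sign convention will force the inequality to fail.

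The main step, and the real obstacle, is the genuinely geometric case in which $u$ crosses $\p^-V'$ or $\p^+V'$. Here I will adapt the argument of \cite{CiOa18}. Choose a regular level $\{r=r_0\}$ in the collar that $u$ crosses transversally. Let $S=u^{-1}(\text{region on the }V'\text{ side})$, a compact surface on which $u$ is untwisted. Apply Stokes' theorem to $E(u|_S)=\int_S u^*d\lambda' - \ldots$. Using $J'^*\lambda'=dr$ and the convexity or concavity of $H'_{f,\epsilon}$ in $r$, the boundary term $\int_{\p S}$ has a definite sign. This is the Abouzaid--Seidel type integrated maximum principle. For the $\p^-$ side I will use the negative-end version from \cite[Section~2.3]{CiOa18}, which uses concavity. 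Combining this sign with the small action gap $\epsilon^2(\max f-\min f)$ forces the energy to be zero. Hence $u|_S$ lies in a level set, which contradicts transversality. For Part~\ref{item: 2}, a trajectory that exits $V'$ through $\p^+V'$ is ruled out by the convex maximum principle, exactly as in Lemma~\ref{lem: noescape}. A trajectory that exits through $\p^-V'$ into the twisting region is ruled out by the concave estimate. The twisting does not interfere here, because the part of $u$ near $\p^-V'$ is untwisted.

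The delicate point I expect to have to check carefully is the bookkeeping of the primitive $g$ inside $W'_{\bottom}$. One has to verify that it does not spoil the action inequality when a portion of $u$ lies in $W'_{\bottom}$. My plan is to avoid the issue entirely by working only with $u|_S$, whose boundary lies in the untwisted collar where $g\equiv 0$.
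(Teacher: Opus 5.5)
Your proposal takes a genuinely different route from the paper. You use an integrated maximum principle at fixed $\epsilon$, whereas the paper uses a compactness argument as $\epsilon\to0$. The decisive step of your argument, that the sign of the boundary term together with the small action gap $\epsilon^2(\max f-\min f)$ forces $E(u|_S)=0$, does not hold, and this is fatal for part~(2). Suppose $u$ has ends at critical points $p_\pm$ of $f$ and dips through $\partial^-V'$. Let $S'=u^{-1}(\{r\ge r_0\})$ be the untwisted part of $u$ on the $V'$-side of a level $r_0$ in the collar of $\partial^-V'$. Its boundary curves lie on the level $r_0$, and since $S'$ contains both ends, $\int_{\partial S'}dt=0$. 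Hence every term involving $h(r_0)$ and $h'(r_0)$ cancels, and Stokes gives (with the conventions in which gradient fragments are negative gradient lines of $f$)
\[
E(u|_{S'})=\epsilon^2\bigl(f(p_-)-f(p_+)\bigr)+\int_{\partial S'}\partial_\nu(r\circ u)\,\le\,\epsilon^2\bigl(f(p_-)-f(p_+)\bigr).
\]
This is a small \emph{positive} bound, not a contradiction, and concavity never enters. The complementary piece $u^{-1}(\{r\le r_0\})$ is compact. The seam terms from the primitive of $\phi'^*\lambda'-\lambda'$ cancel on it, because the seams end on the level $r_0$ outside $\supp\phi'$. Its energy is $\int\partial_\nu(r\circ u)\ge0$, again consistent.

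Conceptually, for $H=h(r)$ and cylindrical $J$ the function $\rho=r\circ u$ satisfies an identity of the form $\Delta\rho+b\,\partial_s\rho=|\partial_su|^2\ge0$, whatever the sign of $h''$. So there is a maximum principle but no minimum principle, and no local ``concave estimate'' can stop a trajectory from dipping into $W'_{\bottom}$. The smallness of $\epsilon$ has to be used globally, which is what the paper does. It argues by contradiction along $\epsilon_\nu\to0$ and uses Morse--Bott compactness to extract a broken twisted trajectory with gradient fragments. In part~(2) the limit is a chain of negative gradient lines of $f$ inside $\Int V'$, so $u_\nu\subset\Int V'$ for large $\nu$. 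In part~(1) the limit is excluded by the case analysis $\gamma_i(0)\in\Int V'$, $\partial^+V'$, $\partial^-V'$.

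Part~(1) can be done your way, but not as written. Here $S=u^{-1}(V'\text{-side})$ is not compact. It contains the positive end at $p_+$, whose contribution $-H(p_+)=-c-\epsilon^2f(p_+)$ is exactly what makes the argument work. The correct estimate is $E(u|_S)\le h(r_0)-r_0h'(r_0)-c-\epsilon^2 f(p_+)$. Since $(h-rh')'=-rh''>0$ on the concave collar, choose $r_0$ at or near the linear region, where $h=ar+b$ with $b<c-a(1-\epsilon)$. This gives $E(u|_S)<0$ for $\epsilon$ small, with the slope $a$ fixed. So the contradiction comes from strict negativity, not from ``energy $=0$''. This would prove~(1) at fixed small $\epsilon$ without compactness, a real simplification for that part.

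Your preliminary action bookkeeping should be discarded, for three reasons:
\begin{itemize}
\item Since the slopes are below the spectrum, there are no orbits in the collars at all.
\item The claim that twisted orbits deep in $W'_{\bottom}$ have action above $-c-\epsilon^2\max f$ is false in general. Their action involves the primitive $g$ on $\supp\phi'$, which nothing relates to the small constant $c$.
\item The case ``$u$ stays away from the collar'' is vacuous in~(1), since any trajectory from $W'_{\bottom}$ to $V'$ must cross the collar.
\end{itemize}
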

\begin{proof}
Following the proof of \cite[Lemma~2.5]{CiOa18}, we show \ref{item: 1}, and then the same arguments apply \ref{item: 2} as well.
Arguing by contradiction, assume that there exist a sequence $\epsilon_{\nu} \to 0$ and a sequence of twisted Floer trajectories $u_\nu\colon \R\times \R\to W'$ for $(H_{f,\epsilon}',\bm{J}')$ from $x_-$ to $p_+$, with $x_-$ an orbit of $H$ inside $W'_{\bottom}$ and $p_+$ a critical point of $f$.

Now, $V'$ can be seen as a Morse--Bott critical manifold for the action functional $\mathcal{A}_{H'}$, and $\mathcal{A}_{H_{f,\epsilon_\nu}'}$ is a sequence of Morse perturbations of $\mathcal{A}_{H'}$ along $V'$.
Since the critical manifold $V'$ is contained in the region on which $\phi'$ acts identically, the Morse--Bott compactness theorem in our situation can be proved, and the proof of \cite[Proposition~4.7]{BO_MB} carries almost verbatim.
See also \cite[Section~2]{DoSa94}.

Suppose that, possibly up to a subsequence, $u_\nu$ converges to a \emph{broken twisted Floer trajectory $[\bm{u}]$ with gradient fragments}, that is, in the definition \cite[Lemma~2.5]{CiOa18} and \cite[Definition~4.2]{BO_MB} of a broken Floer trajectory we replace (untwisted) Floer trajectories by twisted ones.
Since the action $\mathcal{A}_{H'}$ is constant on $V'$, the energy estimate for twisted Floer trajectories implies that each level of $[\bm{u}]$ contains at most one gradient trajectory for $f$.
In particular, this excludes the broken twisted Floer trajectories depicted in Figure~\ref{fig: mb_2}.
The limit $[\bm{u}]$ has a representative $\bar{\bm{u}}=(\bm{u}_1,\dots,\bm{u}_\ell)$ depicted as follows (see Figure~\ref{fig: mb_1}):
\begin{figure*}[t]
    \centering
\begin{overpic}[width=8cm]{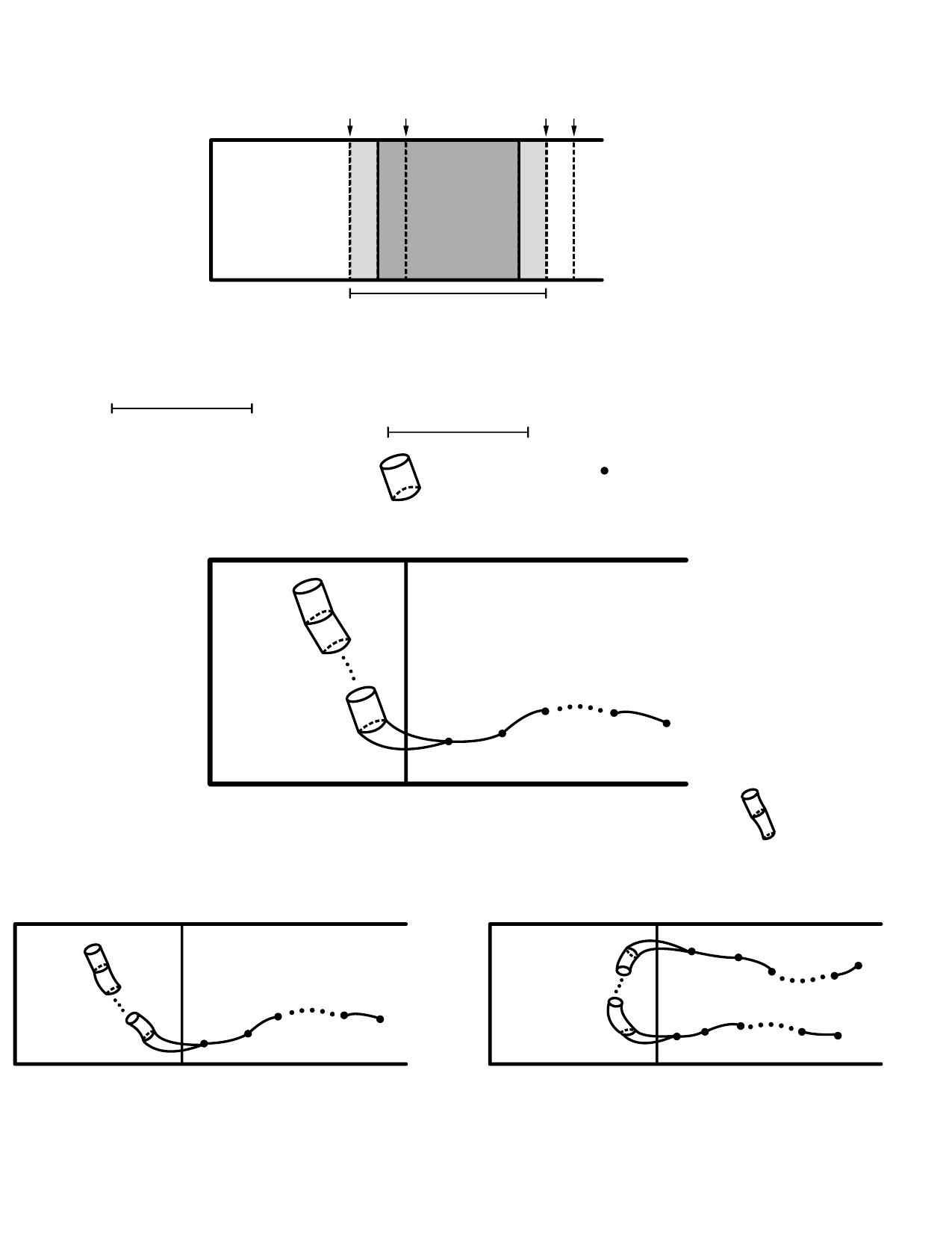}
\put(5, 70){\footnotesize $W'_{\text{bottom}}$ }
\put(100, 70){\footnotesize $V'$ }
\put(55, 65){\footnotesize $\boldsymbol{u}_1$}
\put(60, 53){\footnotesize $\boldsymbol{u}_2$}
\put(78, 30){\footnotesize $\boldsymbol{u}_{i-1}$}
\put(88, 20){\footnotesize $u_{i}$}
\put(118, 21){\footnotesize $\gamma_{i}$}
\put(130, 33){\footnotesize $\boldsymbol{u}_{i+1}$}
\put(195, 35){\footnotesize $\boldsymbol{u}_{\ell}$}
\put(210, 20){\footnotesize $p_+$}

\end{overpic}
    \caption{A broken twisted trajectory with gradient fragments.}
    \label{fig: mb_1}
\end{figure*}
there exists $1\le i \le \ell$ such that
\begin{itemize}
	\item $\bm{u}_1,\dots,\bm{u}_{i-1}$ are twisted Floer trajectory for $H'$, with $\bm{u}_1(-\infty)=x_-$ and $\bm{u}_j(+\infty)=\bm{u}_{j+1}(-\infty)$ for all $1\le j\le i-2$.
	\item $\bm{u}_i=(u_i,\gamma_i)$ is a \emph{twisted Floer trajectory with one gradient fragment}, that is, $u_i$ is a twisted Floer trajectory for $H'$ and $\gamma_i\colon [0,+\infty)\to V'$ is a semi-infinite negative gradient trajectory for $f$, subject to the conditions: $\bm{u}_{i-1}(+\infty)=u_i(-\infty)$ if $i>1$ and $u_i(-\infty)=x_-$ if $i=1$; $u_i(+\infty)=\gamma_i(0)\in V'$; and $\gamma_i(+\infty)=p_+$ if $i=\ell$.
	\item $\bm{u}_{i+1},\dots,\bm{u}_\ell$ are negative gradient trajectories $\bm{u}_j=\gamma_j\colon \R\to V'$ for $f$, subject to the conditions: $\gamma_{j-1}(+\infty)=\gamma_j(-\infty)$ if $i+1\le j\le \ell$ and $\gamma_\ell(+\infty)=p_+$.
\end{itemize}
It follows from Lemma~\ref{lem: noescape} that twisted Floer trajectories $\bm{u}_1,\dots,\bm{u}_{i-1},u_i$ do not intersect $W'_{\topp}$.
The rest of the arguments are similar to those of \cite[Lemma~2.5]{CiOa18} since the part of a twisted Floer trajectory restricted to the cobordism $V'$ is untwisted.
To derive a contradiction, we examine the level $\bm{u}_i=(u_i,\gamma_i)$, which consists of a twisted Floer trajectory with one gradient fragment.\\\\
{\bf Case 1.} $\gamma_i(0)\in \Int V' = V'\setminus \p V'$.

Choose $s_0\gg0$ large enough such that the twisted Floer trajectory $u_i$ restricted to the set $[s_0,+\infty)\times \R$ is contained in $V'$ and is a $J'$-holomorphic curve $u\colon \dot{D}\cong [s_0,+\infty)\times S^1\to V'$, where we biholomorphically identify $[s_0,+\infty)\times S^1$ with a punctured disc $\dot{D}=D\setminus \{0\}$.
Since $0\in D$ is a removable singularity, $u$ (and hence $u_i$ as well) extends to a smooth map $u\colon D\to V'$.
Note that $S=u_i^{-1}(V')$ is a compact smooth surface with boundary (up to shifting the boundary $\p V'$ slightly in cylindrical coordinates) and that $u_i(\p S)\subset \p^- V'$.
Now we consider $u_i|_S$, namely the part of $u_i$ lying in $V'$, and the argument in the proof of Lemma~\ref{lem: noescape} yields that $u_i|_S$ is entirely contained in $\p^-V'$, which contradicts the assumption.\\\\
{\bf Case 2.} $\gamma_i(0)\in \p^+ V'$.

Pick $\delta>0$ such that $f$ has no critical point on $[1-\delta,1]\times \p^+V'$.
Since $[\bm{u}]$ is the limit of the sequence $u_\nu$, we can choose $\nu_0\ge 1$ such that the image of $u_{\nu_0}$ intersects $(1-\delta,1]\times \p^+V'$.
We observe that the part of $u_{\nu_0}$ lying in $[1-\delta,1]\times \p^+V'$ is untwisted and that both asymptotes $x_-, p_+$ of $u_{\nu_0}$ are located in $W'_{\bottom}\cup V'\, \setminus \, ([1-\delta,1]\times \p^+V')$.
Therefore, Lemma~\ref{lem: noescape} (with $\p V := \{1-\delta\}\times \p^+V'$) gives a contradiction.\\\\
{\bf Case 3.} $\gamma_i(0)\in \p^- V'$.

Since $\gamma_i(0)$ is not a critical point of $f$ and $\gamma_i$ is a negative gradient trajectory for $f$, it follows that $\gamma_i\colon [0,+\infty)\to V'$ must enter $V'$ in positive time.
However, the negative gradient $-\nabla f$ points outwards $V'$ along $\p^- V'$, which is a contradiction.\\\\
We now explain the proof of \ref{item: 2}.
Arguing by contradiction, suppose that there exist a sequence $\epsilon_\nu\to 0$ and a sequence of twisted Floer trajectories $u_\nu\colon \R\times \R\to W'$ for $(H_{f,\epsilon}',\bm{J}')$ leaving $V'$ whose asymptotes are critical points $p_\pm$ of $f$.
Since each $u_\nu$ does not intersect $W'_{\topp}$ by Lemma~\ref{lem: noescape}, it must intersect the interior of $W'_{\bottom}$.
In this case, the limit $[\bm{u}]$ of the sequence $u_\nu$ has a representative $\bar{\bm{u}}=(\bm{u}_1,\dots,\bm{u}_\ell)$, with $\ell\ge 1$, such that $\bm{u}_1,\dots,\bm{u}_\ell$ are negative gradient trajectories $\bm{u}_j=\gamma_j\colon \R \to V'$ for $f$, subject to the conditions: $\gamma_1(-\infty)=p_-$; $\gamma_{j-1}(+\infty)=\gamma_j(-\infty)$ if $2\le j\le \ell$; and $\gamma_\ell(+\infty)=p_+$.
Other types of limits are excluded, as illustrated in Figure~\ref{fig: mb_2}.
\begin{figure*}[t]
    \centering
\begin{overpic}[width=8cm]{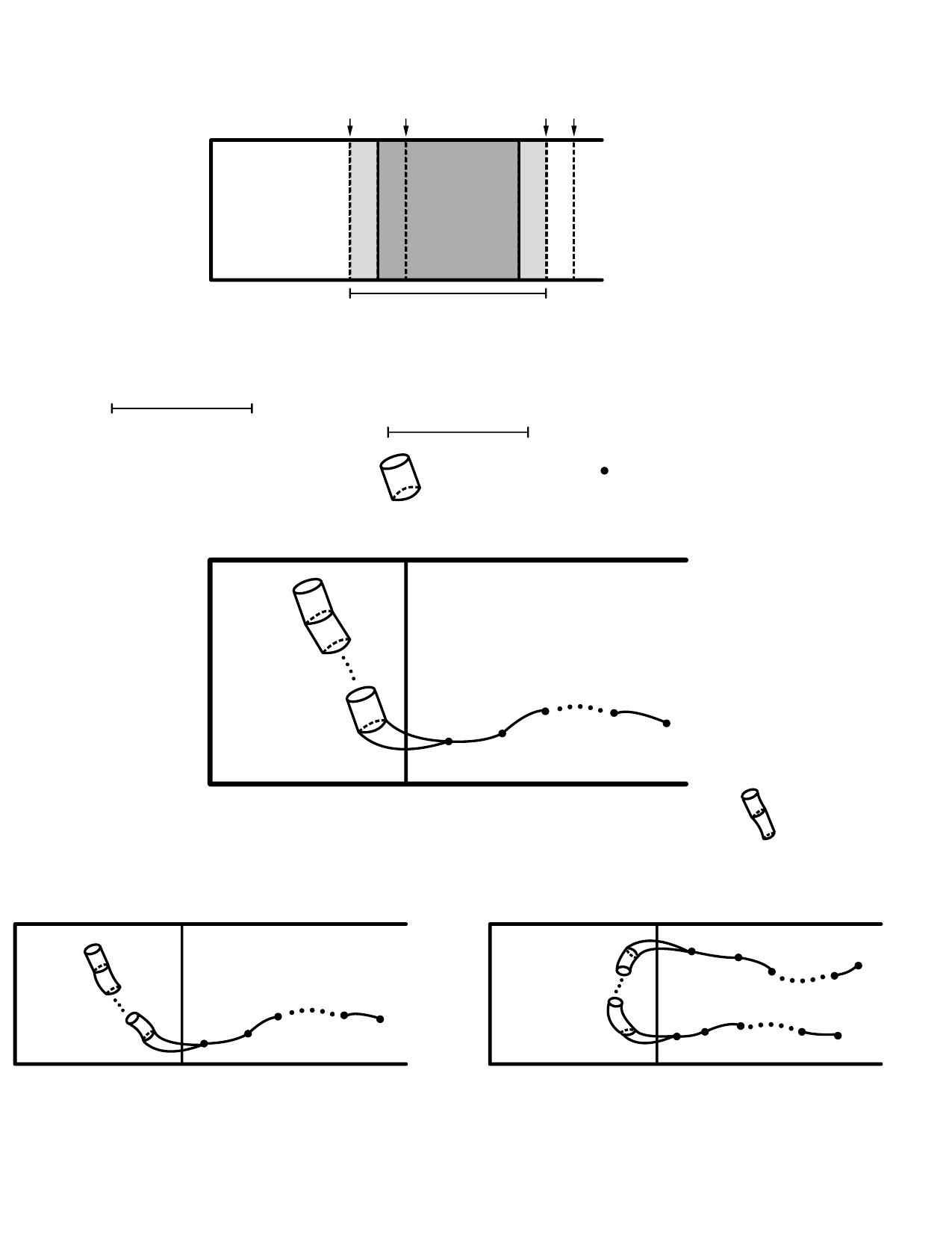}
\put(5, 40){\footnotesize $W'_{\text{bottom}}$}
\put(100, 40){\footnotesize $V'$}
\put(50, 25){\footnotesize $\boldsymbol{u}_{i-1}$}
\put(80, 8){\footnotesize $u_{i}$}
\put(55, 63){\footnotesize $\boldsymbol{u}_{j+1}$}
\put(77, 76){\footnotesize $u_{j}$}
\put(215, 50){\footnotesize $p_-$}
\put(205, 10){\footnotesize $p_+$}
\end{overpic}
    \caption{This broken twisted Floer trajectory cannot arise as the limit of the sequence~$u_{\nu}$.}
    \label{fig: mb_2}
\end{figure*}
Observe that the image of each $\gamma_j$ is contained in the interior of $V'$ as $f$ has no critical points near $\p^-V'$.
Since the sequence $u_\nu$ converges to $[\bm{u}]$, the image of $u_{\nu}$ is arbitrarily $C^0$-close to that of $\bar{\bm{u}}$ as $\nu\to \infty$.
Therefore, for $\nu_0\gg1$ large enough the image of $u_{\nu_0}$ is contained in the interior of $V'$.
This contradicts the fact that $u_{\nu_0}$ intersects the interior of $W'_{\bottom}$.
\end{proof}

\subsection{Proof of Theorems~\ref{thm: localtoglobal} and \ref{thm: exact_triangle}}\label{sec: proof_localtoglobal}
Let us begin with the proof of Theorem~\ref{thm: exact_triangle}.
The strategy is to construct a short exact sequence of chain complexes associated to a suitable Hamiltonian $K\colon \R\times W\to \R$,
\begin{equation}\label{eq: exact_seq}
	0\to \CF(\phi|_V;K|_V) \overset{i}{\to} \CF(\phi;K) \overset{j}{\to} \CF(\phi;K)\, /\, \CF(\phi|_V;K|_V) \to 0
\end{equation}
such that the homologies of the first two complexes are $\HF(\phi|_V)$ and $\HF(\phi)$, respectively, while the quotient chain complex is identified with the Morse chain complex $\CM(K|_{W\setminus \Int V})$ for the smooth cobordism $W\setminus \Int V$.
To this end, we consider the following geometric setup as described in Section~\ref{sec: restrictions_traj}:
\begin{itemize}
	\item $(W',\lambda',\phi'):=(W,\lambda,\phi)$;
	\item $V':=W\setminus \Big(V \cup \big([1,1+\epsilon)\times \p V\big)\cup \big((1-2\epsilon,1]\times \p W\big)\Big)$; and
	\item $(K,{\bm J}):=(H'_{f,\epsilon},{\bm J}')$.
\end{itemize}
Then one can readily verify that
\begin{itemize}
	\item $W'_{\bottom}=V\cup \big([1,1+\epsilon]\times \p V\big)$ and $W'_{\topp}=[1-2\epsilon,1]\times \p W$;
 	\item $\phi'\in \Symp(W',\lambda';W'_{\bottom})$;
	\item $\p^- V'=\{1+\epsilon\}\times \p V$ and $\p^+ V'=\{1-2\epsilon\}\times \p W$;
	\item $(K,\bm{J})\in \mathcal{H}_\phi\times \mathcal{J}_\phi$ and $(K|_V,\bm{J}|_V)\in \mathcal{H}_{\phi|_V}\times \mathcal{J}_{\phi|_V}$; and
	\item $K$ is concave and convex as a function of $r$ on $[1,1+\epsilon]\times \p V$ and $[1-2\epsilon, 1-\epsilon]\times \p W$, respectively, and equal to $c+\epsilon^2f$ on $V'$.
\end{itemize}
See Figure~\ref{fig: CO_setting}.
\begin{figure*}[t]
    \centering
\begin{overpic}[width=9.5cm]{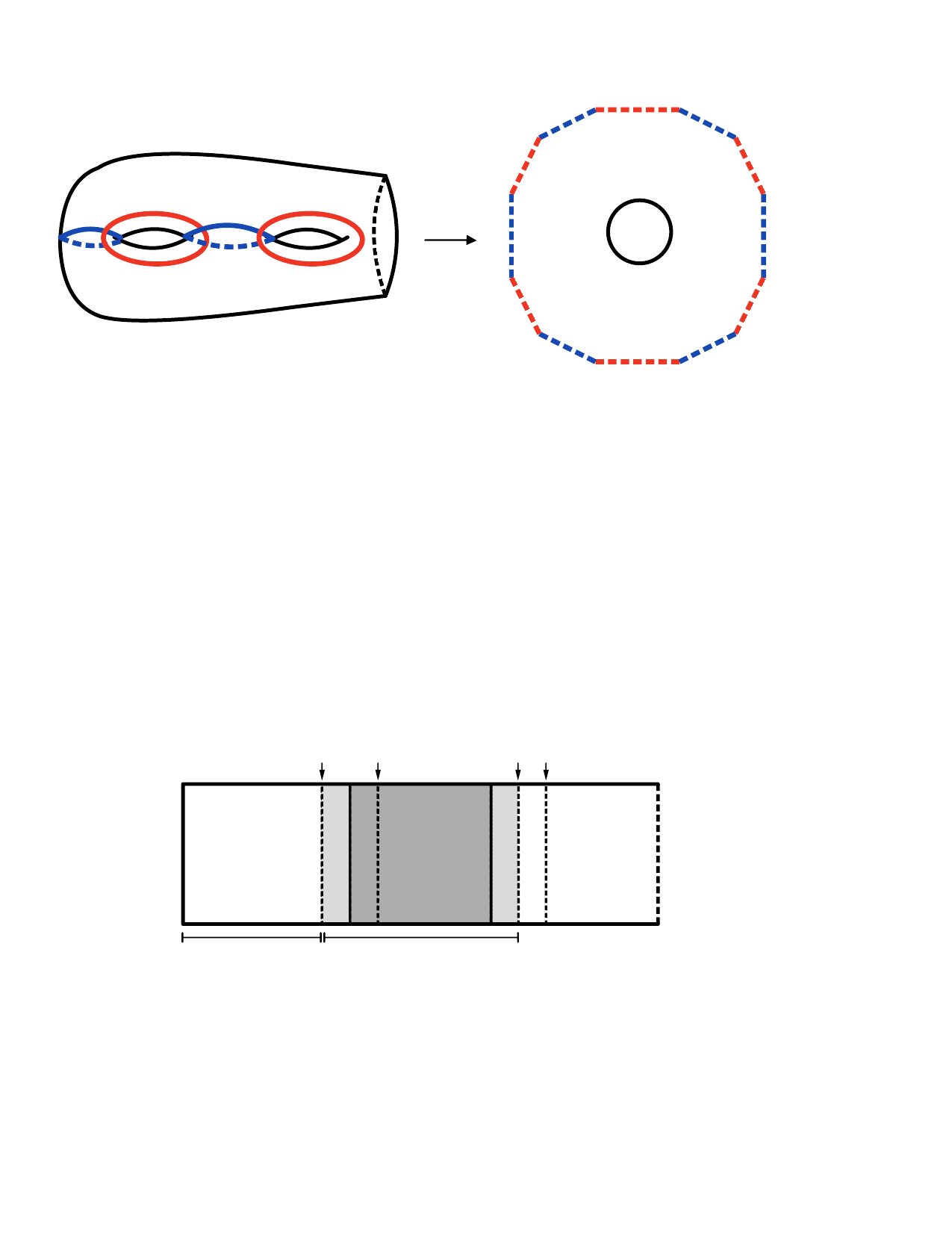}
\put(40, 45){\small$W'_{\text{bottom}}$}
\put(35, -10){\small$V$}
\put(125, 45){\small$V'$}
\put(213, 45){\small$W'_{\text{top}}$}
\put(125, -10){\small $V'_{\epsilon}$}
\put(70, 105){\scriptsize $1-\epsilon$}
\put(100, 105){\scriptsize $1+\epsilon$}
\put(176, 105){\scriptsize $1+\epsilon$}
\put(200, 105){\scriptsize $1+2\epsilon$}
\end{overpic}
    \caption{The geometric setup for $(W',V')$.}
    \label{fig: CO_setting}
\end{figure*}
Here the constants $c,\epsilon>0$ can be thought of as arbitrarily small.
Additionally, assume that ${\bm J}$ is generic so that the chain complexes $\CF(\phi|_V;K|_V)$ and $\CF(\phi;K)$ are well-defined.
We now explain the construction of \eqref{eq: exact_seq} as a consequence of Section~\ref{sec: restrictions_traj}.\\\\
{\bf Step 1.} \emph{The chain complex $(\CF(\phi|_V;K|_V),\p_V^{\bm{J}|_V})$ is a subcomplex of $(\CF(\phi;K), \p_W^{\bm{J}})$.}

Observe first that $\mathcal{P}(K)$ consists of two types of orbits:
\begin{enumerate}
	\item $\mathcal{P}(K|_V)$, twisted orbits  contained in $V$; and
	\item $\mathcal{P}(K|_{W\setminus \Int V})$, twisted orbits contained in $W \setminus \Int V$, which are constant orbits given by critical points of $f$.
\end{enumerate}
Let $i\colon \CF(\phi|_V;K|_V)\to \CF(\phi;K)$ denote the inclusion induced by $\mathcal{P}(K|_V)\subset \mathcal{P}(K)$.
By part~\eqref{item: 1} of Lemma~\ref{lem: constant_end}, the differential $\p_W^{\bm{J}}$ restricts to $\CF(\phi|_V;K|_V)$, and this restriction coincides with $\p^{\bm{J|_V}}_V$ by Lemma~\ref{lem: noescape}.
Therefore, $i$ is a chain map, which proves the assertion.
\\\\
{\bf Step 2.} \emph{The chain complex $(\CF(\phi;K)\, /\, \CF(\phi|_V;K|_V),\overline{\p^{\bm J}_W})$ is isomorphic to the Morse chain complex $(\CM(K|_{W\setminus \Int V}),\p_{\morse})$, where $\p_{\morse}$ denotes the differential for the Morse--Smale pair $(K|_{W\setminus \Int V},g_{\bm{J}}|_{W\setminus \Int V})$ on the smooth cobordism $W\setminus \Int V$.}

Note that there is a correspondence between the generators of $\CF(\phi;K)\, /\, \CF(\phi|_V;K|_V)$ and $\CM(K|_{W\setminus \Int V})$, as both are generated by the orbits of $\mathcal{P}(K|_{W\setminus \Int})$.
This correspondence provides an identification $\CF(\phi;K)\, /\, \CF(\phi|_V;K|_V) \cong \CM(K|_{W\setminus \Int V})$ as vector spaces.
By part \eqref{item: 2} of Lemma~\ref{lem: constant_end}, every twisted Floer trajectory for $K$ with both asymptotes in $W\setminus \Int V$, which are necessarily contained in $V'$, is entirely contained in $W\setminus \Int V$.
Moreover, it is untwisted since $\phi$ acts identically on the cobordism.
Since $K|_{W\setminus \Int V}$ can be chosen to be $C^2$-small so that every Floer trajectory contained in $W\setminus \Int V$ is a Morse trajectory for $K|_{W\setminus \Int V}$, see \cite[Theorem~7.3]{SaZe92}, 
it follows that the two differentials $\overline{\p^{\bm{J}}_W}\equiv \p_{\morse}$ on $\CF(\phi;K)\, /\, \CF(\phi|_V;K|_V)$ and $\CM(K|_{W\setminus \Int V})$ coincide up to identification.\\\\
{\bf Step 3.} \emph{Proof of Theorems~\ref{thm: localtoglobal} and \ref{thm: exact_triangle}}

From Steps 1 and 2, we have the short exact sequence \eqref{eq: exact_seq}.
In view of the isomorphisms $\HM(K|_{W\setminus \Int V},g_{\bm{J}}|_{W\setminus \Int V})\cong \Ho(W\setminus \Int V)\cong \Ho(W\setminus V)$, where the first homology denotes the (total) Morse homology, the associated long exact sequence is equal to \eqref{eq: exact_tri}, as sought.
Substituting $\phi$ by $\phi^k$ in \eqref{eq: exact_tri} for $k\ge 1$ yields the inequalities
\begin{equation}\label{eq: rankest}
	\begin{split}
	\dim \HF(\phi|_V^k) + \dim \Ho(W\setminus V) &\ge \dim \HF(\phi^k),\\
	\dim \HF(\phi^k) + \dim \Ho(W\setminus V) &\ge \dim \HF(\phi|_V^k).	
\end{split}
\end{equation}
Since $\dim \Ho(W\setminus V)$ is finite and independent of $k$, it follows from the definition of Floer-theoretic entropy that $h_{\floer}(\phi) = h_{\floer}(\phi|_V)$.

\section{Smith-type inequality for fixed point Floer homology}\label{sec: smith_ineq}
This section establishes a Smith-type inequality for fixed point Floer homology, see Theorem~\ref{thm: smith_floer}.
This result follows as a direct application of the corresponding inequality for Lagrangian Floer homology, proved by Seidel--Smith \cite{SS}.
We begin by recalling the formulation of their result relevant to our context.

An \emph{equivariant} tuple $(W,\lambda,L_0,L_1,\sigma)$ consists of a Liouville domain $(W,\lambda)$, an \emph{exact symplectic involution} $\sigma$ of $W$, that is, $\sigma^*\lambda=\lambda$ and $\sigma^2=\mathds{1}$, and $\sigma$-invariant admissible Lagrangians $L_0,L_1\subset W$, such that the fixed locus $S=\Fix(\sigma)$ is non-empty and connected.
Note that $(S,\lambda|_S)$ is a Liouville domain with $\p S= \p W\cap S$.
Given an equivariant tuple $(W,\lambda,L_0,L_1,\sigma)$, let $n$ and $n_{\anti}$ be the complex dimensions of $W$ and $S$, respectively.
Here we introduce the following notation and \cite[Definition 18]{SS}:
\begin{itemize}
	\item $L^{\inv}_i:=L_i\cap S$ denotes the invariant part of $L_i$.
	\item $TW^{\anti}$ denotes the pullback bundle of the normal bundle $N_WS$ via the projection $[0,1]\times S\to S$.
	\item $TL_i^{\anti}:=N_{L_i}L_i^{\inv}$ denotes the normal bundle of $L_i^{\inv}$ in $L_i$.
\end{itemize}

\begin{definition}
	A \emph{stable normal trivialization} for an equivariant tuple $(W,\lambda,L_0,L_1,\sigma)$ consists of the following data:
\begin{itemize}
	\item A unitary trivialization of vector bundles over $S$
$$
\phi\colon TW^{\anti}\oplus \C^N \to \C^{n_{\anti}+N} \qquad \text{for some $N\ge 0$}.
$$
	\item Lagrangian subbundles $\Lambda_i\subset (TW^{\anti}\oplus \C^N)|_{[0,1]\times L_i^{\inv}}$ for $i=0,1$, such that 
\begin{align*}
\Lambda_0|_{\{0\}\times L^{\inv}_0}=TL_0^{\anti}\oplus \R^N &\quad\text{and}\quad
\phi(\Lambda_0|_{\{0\}\times L_0})=\R^{n_{\anti}+N} \\
\Lambda_1|_{\{1\}\times L^{\inv}_1}=TL_1^{\anti}\oplus i\R^N &\quad\text{and}\quad
\phi(\Lambda_1|_{\{1\}\times L_1})=i\R^{n_{\anti}+N} 
\end{align*}

\end{itemize}
If this exists, then $(W,\lambda,L_0,L_1,\sigma)$ is called \emph{stably normally trivialisable}.
\end{definition}
The Smith-type inequality of Seidel--Smith is as follows:
\begin{theorem}[{\cite[Theorem~1]{SS}}]\label{thm: seidel_smith}
Let $(W,\lambda,L_0,L_1,\sigma)$ be an equivariant tuple that is stably normally trivialisable.
Then, we have the inequality: 
$\dim \HF(L_0,L_1)\ge \dim \HF(L_0^{\inv},L_1^{\inv})$.
\end{theorem}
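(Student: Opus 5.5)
The plan is to follow the strategy of Seidel--Smith: compare ordinary and $\Z_2$-equivariant Lagrangian Floer homology through a Borel-type construction, and then apply a localization theorem. First, I would define the Borel equivariant Floer complex $\CF_{\Z_2}(L_0,L_1)$ over $\Z_2[[h]]$ with $\deg h=1$. To do this, choose a family of almost complex structures $J_w$ parametrized by $w\in S^\infty$, equivariant in the sense that $J_{-w}=\sigma^*J_w$. Then count pairs consisting of a Morse flow line on $\R P^\infty=S^\infty/\Z_2$ together with a Floer strip for the corresponding $J_w$, weighted by powers of $h$ according to the Morse index drop.

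Filtering this complex by powers of $h$ gives a spectral sequence with $E_1$-page $\HF(L_0,L_1)\otimes \Z_2[[h]]$ that converges to $\HF_{\Z_2}(L_0,L_1)$. After inverting $h$, this yields
$$
\dim_{\Z_2((h))}\HF_{\Z_2}(L_0,L_1)\otimes_{\Z_2[[h]]}\Z_2((h))\le \dim \HF(L_0,L_1).
$$
All the compactness statements are standard here: the Lagrangians are admissible and exact, $\sigma$ is exact, and the maximum principle holds near $\p W$.

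The second step is the localization isomorphism
$$
\HF_{\Z_2}(L_0,L_1)\otimes\Z_2((h))\cong \HF(L_0^{\inv},L_1^{\inv})\otimes\Z_2((h)),
$$
which combined with the inequality above gives the theorem. The intended argument is to choose a single $\sigma$-invariant $J$ (as $J_w$ near a fixed point of the $\Z_2$-action, heuristically) and perturbations in $S$. Then invariant strips are exactly strips in $S$ with boundary on $L_i^{\inv}$, while non-invariant strips come in free $\Z_2$-orbits. An action-filtration or energy argument then shows that, after inverting $h$, the free orbits contribute an $h$-torsion (acyclic) part. What survives is the complex of $S$, tensored with $\Z_2((h))$.

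The main obstacle is transversality. An invariant $J$ generically cannot make invariant strips regular inside $W$. The linearized operator of an invariant strip $u\subset S$ splits into a tangential part, which is regular after generic choice in $S$, and a normal part $D^{\anti}_u$ acting on sections of $u^*N_WS$ with boundary in $TL_i^{\anti}$. What is needed is that $D^{\anti}_u$ be surjective, in fact invertible, in the relevant index range. This is where the stable normal trivialization enters. Using the trivialization $\phi$ and the Lagrangian subbundles $\Lambda_i$, I would homotope the normal boundary conditions, after stabilizing by $\C^N$, to the constant pair $(\R^{n_{\anti}+N}, i\R^{n_{\anti}+N})$, for which the $\bar\partial$-operator on the strip is invertible. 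The idea is then to build this stabilized, deformed normal data into the equivariant complex (Seidel--Smith's ``polarization'' device) so that the normal operators of all invariant strips are invertible and the localization map is well-defined. I expect this stabilization-and-deformation argument to be the technically hardest step. In particular, one must check that the extra $\C^N$ factor does not change the Floer complexes involved.
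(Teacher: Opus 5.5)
The paper gives no proof of this statement; it is quoted from Seidel--Smith. Your architecture is theirs: a Borel complex over $\Z_2[[h]]$ built from $\sigma$-equivariant $S^\infty$-families, the $h$-adic spectral sequence bounding the localized group by $\dim\HF(L_0,L_1)$, and a localization isomorphism that rests on equivariant transversality. But the two steps that carry the content are not supplied, and where you do commit to a mechanism it fails.

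\emph{Localization.} Once a single invariant $J$ is regular, you simply take the constant family $J_w\equiv J$. The action on $S^\infty$ is free, so there is no fixed point to localize at. The Borel complex is then $\CF(L_0,L_1)\otimes\Z_2[[h]]$ with differential $\partial+h(1+\sigma_*)$. An action filtration only gives a spectral sequence whose first page is spanned by the invariant generators and whose higher differentials are zigzags through free orbits. This bounds the localized homology from \emph{above}, which is the wrong direction for the inequality you want. The zigzags genuinely matter. Take invariant generators $x_1,x_2$ and a free pair $y,\sigma y$ with $\partial x_1=y+\sigma y$ and $\partial y=\partial(\sigma y)=x_2$. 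This is a complex with a chain involution whose ordinary and localized homology both vanish, while its invariant part has zero differential and rank $2$. So the localization cannot be deduced from the mod~$2$ complex and its involution alone. You need geometric input about the $\sigma$-action on moduli spaces. For example: when every $D^{\anti}_u$ is invertible, invariant strips fill whole components of the moduli spaces in $W$. Hence the non-invariant components of the one-dimensional moduli spaces between invariant generators come in $\sigma$-pairs, and this is what rules out configurations like the one above. Your sketch does not provide anything of this kind.

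\emph{Normal operators.} For the given data, an invariant strip from $x_-$ to $x_+$ has $D^{\anti}_u$ of index $c(x_+)-c(x_-)$. Here $c(x)$ is the Maslov-type integer of the pair of paths $\Lambda_0|_{[0,1]\times\{x\}}$, $\Lambda_1|_{[0,1]\times\{x\}}$. This index is typically nonzero. For instance, in $\C\times\C$ with $\sigma(z,w)=(z,-w)$, letting $TL_1^{\anti}$ rotate along an arc $L_1^{\inv}$ shifts the index for an invariant bigon by the number of half-turns. Index is a homotopy invariant, and sliding the boundary conditions along $\Lambda_i$ moves the asymptotic data at $x_\pm$, so it is not a homotopy of the operators you actually have. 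For the original Lagrangians, no invariant $J$ makes all $D^{\anti}_u$ invertible. The data $(\phi,\Lambda_0,\Lambda_1)$ must instead be used to change the geometry:
\begin{enumerate}
\item Pass to $W\times\C^N$ with involution $\sigma\times(-1)$ and Lagrangians $L_0\times\R^N$, $L_1\times i\R^N$; by K\"unneth, neither side of the inequality changes.
\item Use $\phi$ to identify an invariant neighbourhood of the fixed locus with a product.
\item Realise $\Lambda_0,\Lambda_1$ by Hamiltonian isotopies generated by fibrewise quadratic Hamiltonians. These are automatically invariant and fix $L_i^{\inv}$.
\item Take $J$ split near the fixed locus.
\end{enumerate}
Then every $D^{\anti}_u$ is the standard operator with constant boundary conditions $\R^{n_{\anti}+N}$ and $i\R^{n_{\anti}+N}$, hence invertible. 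Non-invariant strips must leave the product region, where invariant perturbations make them regular. Hamiltonian invariance then shows that neither $\HF(L_0,L_1)$ nor $\HF(L_0^{\inv},L_1^{\inv})$ has changed.
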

Note that if $\phi\in\Symp(W,\lambda)$ commutes with $\sigma$, then it restricts to $\phi|_S\in \Symp(S,\lambda|_S)$.
We now state the main result of this section:
\begin{theorem}\label{thm: smith_floer}
Let $(W,\lambda)$ be a Liouville domain, and let $\sigma$ be an exact symplectic involution of $W$ whose fixed locus $S$ is non-empty and connected.
Suppose that the normal bundle $N_WS$ is unitarily trivialisable and $\Ho^i(S)$ is free for all $i\ge 1$.
Then, for every $\phi\in \Symp(W,\lambda)$ that commutes with $\sigma$, we have the inequality:
$$
\dim \HF(\phi) \ge \dim \HF(\phi|_S).
$$
\end{theorem}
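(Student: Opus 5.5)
We reduce the statement to the Seidel--Smith inequality (Theorem~\ref{thm: seidel_smith}) via the canonical isomorphism of Theorem~\ref{thm: can_isom}. Consider the rounded product $\widetilde{W\times W}$ with Liouville form $\lambda\oplus(-\lambda)$, the involution $\tilde\sigma=\sigma\times\sigma$, and the Lagrangians $L_0=\Delta$, $L_1=\Gr(\phi)$. We choose the hypersurface $\Sigma$ of Remark~\ref{rem: hypersurface_phi} to be $\tilde\sigma$-invariant, for instance by averaging or by taking it a level set of a $\tilde\sigma$-invariant function built from the radial coordinates. Then $\tilde\sigma$ is an exact symplectic involution of $\widetilde{W\times W}$. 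Since $\sigma\phi=\phi\sigma$, both $\Delta$ and $\Gr(\phi)$ are $\tilde\sigma$-invariant, and they are admissible by Lemma~\ref{lem: graph_admissible}. The fixed locus of $\tilde\sigma$ is $S\times S$, intersected with the rounded domain, which is connected and non-empty and is the rounded product $\widetilde{S\times S}$. The invariant parts are $\Delta^{\inv}=\Delta_S$ and $\Gr(\phi)^{\inv}=\Gr(\phi|_S)$. Theorem~\ref{thm: seidel_smith} then gives
\[
\dim\HF(\Delta,\Gr(\phi))\ge\dim\HF(\Delta_S,\Gr(\phi|_S)).
\]
Applying Theorem~\ref{thm: can_isom} to both $W$ and $S$ gives the claim. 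Here one checks that the restricted rounding of $S\times S$ is a legitimate rounding, so the Lagrangian Floer homology is the one appearing in that theorem.

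The main work is verifying stable normal triviality of the tuple $(\widetilde{W\times W},\Delta,\Gr(\phi),\tilde\sigma)$. The normal bundle of $S\times S$ is $N_WS\boxplus N_WS$. With the sign-reversed symplectic structure on the first factor, a unitary trivialization of $N_WS$ gives one for $\overline{N_WS}\oplus N_WS$ over $S\times S$. The relevant bundles are as follows:
\begin{itemize}
\item $T\Delta^{\anti}$ is the diagonal copy of $N_WS$ over $\Delta_S\cong S$;
\item $T\Gr(\phi)^{\anti}$ is the graph of $d\phi|_{N_WS}$ over $\Gr(\phi|_S)\cong S$.
\end{itemize}
Both are Lagrangian in $\overline{N}\oplus N$. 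We must find a homotopy of Lagrangian subbundles over $[0,1]\times L_i^{\inv}$, after stabilizing by $\C^N$, from these to the constant ones $\R^{n_{\anti}+N}$ and $i\R^{n_{\anti}+N}$ respectively. Using the trivialization, the diagonal of $\overline{\C^m}\oplus\C^m$ is a fixed Lagrangian plane, which a constant path rotates to $\R^{2m}$. So $\Lambda_0$ is easy.

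For $\Lambda_1$, the graph of $d\phi$ corresponds to a map $S\to U(m)$, or more precisely to the graph of a symplectic map homotopic to a unitary one. The obstruction to deforming it to a constant Lagrangian is its class in $[S,U/O]$, stably. Here we expect to use the hypotheses as follows. Since $S$ is a Liouville domain it has the homotopy type of a CW complex of dimension at most $n_{\anti}$. Because $\phi$ is supported in the interior, $d\phi|_{N_WS}$ is the identity near $\partial S$. Freeness of $\Ho^i(S)$ is presumably what Seidel--Smith need in order to kill the relevant obstruction classes in stable homotopy; compare their Milnor-fibre examples.

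We expect the main obstacle to be this obstruction-theoretic step. First we would try to show that $d\phi|_{N_WS}$ is homotopic to the identity as a unitary automorphism, stably. If that fails, we would try to exploit the freedom to use the graph of $d\phi$ over the path $[0,1]$, possibly composing it with the path used for $\Delta$, so that only the difference class matters, and show that this class vanishes using that $\Ho^*(S)$ is free.
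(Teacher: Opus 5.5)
Your reduction is the same as the paper's. You make $\Sigma$ invariant under $\sigma\times\sigma$, apply Theorem~\ref{thm: seidel_smith} to $(\widetilde{W\times W},\lambda\oplus(-\lambda),\Delta,\Gr(\phi),\sigma\times\sigma)$, and translate both sides with Theorem~\ref{thm: can_isom}. Your descriptions of $TM^{\anti}\cong\overline{N_WS}\oplus N_WS$, of $T\Delta^{\anti}$ and of $T\Gr(\phi)^{\anti}$ are the paper's Steps~1--2.

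The genuine gap is that stable normal triviality, the one step with real content, is never proved. You only describe what you expect, and neither fallback works as stated:
\begin{itemize}
\item Nothing in the hypotheses controls the homotopy class of $d\phi|_{N_WS}$, so showing it is stably homotopic to the identity would need an argument you do not have.
\item Freeness of $\Ho^*(S)$ can only rule out torsion, so it cannot kill an obstruction class that is nonzero rationally.
\item By freezing the product trivialization of $\overline{N_WS}\oplus N_WS$ you discard freedom the definition allows. The unitary trivialization over $M^{\inv}=\widetilde{S\times S}$ may be changed by any map $h\colon M^{\inv}\to U$. This alters the classes of both Lagrangian subbundles at once, through the restrictions of $h$ to $\Delta_S$ and to $\Gr(\phi|_S)$, and only the obstruction modulo such changes matters.
\end{itemize}
Separately, a general Liouville domain need not be homotopy equivalent to a complex of dimension $\le n_{\anti}$; that is a property of Weinstein domains, so do not rely on it.

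The paper closes this step with Hendricks's $K$-theoretic argument.
\begin{itemize}
\item Because $TL_i^{\anti}\cong N_WS$ are trivial real bundles, one chooses a complex trivialization of $TM^{\anti}$ over $X=(L_0^{\inv}\times\{0\})\cup(L_1^{\inv}\times\{1\})\subset M^{\inv}\times[0,1]$. It is chosen to complexify real trivializations of $TL_0^{\anti}$ and $J\,TL_1^{\anti}$.
\item By \cite[Proposition~7.1]{He12} it then suffices that this trivialization extend, i.e.\ that the relative class $[(TM^{\anti})_{\rel}]\in\widetilde{K}^0((M^{\inv}\times[0,1])/X)$ vanish.
\item The paper shows this class is torsion: its Chern classes vanish, using $c_i(TM^{\anti})=0$ and injectivity in degrees $\ge 2$ of the pull-back $q^*$ along the quotient map.
\item It shows the group is torsion-free by Atiyah--Hirzebruch. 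This is exactly where freeness of $\Ho^*(S)$ enters, not inside Seidel--Smith's theorem.
\end{itemize}

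If you follow this route, verify the rational step rather than quoting it. The paper gets injectivity of $q^*$ from surjectivity of $\Ho^i(M^{\inv}\times[0,1])\to\Ho^i(X)$ for $i\ge 1$. But the two components of this map are the restrictions to $\Delta_S$ and to $\Gr(\phi|_S)$. These coincide when $\phi|_S$ acts trivially on cohomology (e.g.\ $\phi=\mathds{1}$), so the map is not onto once some $\Ho^i(S)\neq 0$.

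What actually has to vanish is the class in $\Ho^{\mathrm{odd}}(X;\Q)$ determined by $d\phi|_{N_WS}$, modulo the image of that restriction map. In degree one, this says that the determinant class of $d\phi|_{N_WS}$ in $\Ho^1(S;\Q)$, computed in a trivialization of $N_WS$, must lie in the image of $(\phi|_S)^*-1$.
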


\begin{proof}
We begin by proving the following claim.
Although this is slightly more than we need, we prove it as it is of independent interest and does not take much effort.
\\\\
{\bf Claim.} \emph{For all $\phi_0,\phi_1\in \Symp(W,\lambda)$ that commute with $\sigma$, the tuple
\begin{equation}\label{eq: tuple}
	(\widetilde{W\times W},\lambda\oplus(-\lambda),\Gr(\phi_0),\Gr(\phi_1),\sigma\times \sigma)	
\end{equation}
is equivariant and stably normally trivialisable.}
	
We first observe that \eqref{eq: tuple} is equivariant.
Indeed, $\Gr(\phi_0)$ and $\Gr(\phi_1)$ are admissible by Lemma~\ref{lem: graph_admissible} and $\sigma$-invariant as $\phi$ commutes with $\sigma$.
The fixed locus $\widetilde{S\times S}$ of $\sigma\times \sigma$ is non-empty and connected as $S$ is so.
To prove that \eqref{eq: tuple} is stably normally trivialisable, we follow an elegant argument by Hendricks \cite[Section~7]{He12}, where the proof splits into several steps.
For notational simplicity, we write $M=\widetilde{W\times W}$ and $L_i=\Gr(\phi_i)$ for $i=0,1$.
\\\\
{\bf Step 1.} \emph{$TM^{\anti}=T(\widetilde{W\times W})^{\anti}$ is trivial as a complex vector bundle.}

This follows immediately from the definition of $TM^{\anti}$ and the assumption that $N_WS$ is unitarily trivialisable.\\\\
{\bf Step 2.} \emph{$TL_i^{\anti}$ is trivial as a real vector bundle for $i=0,1$.}

For $i=0,1$, write $\phi=\phi_i$ and $L=\Gr(\phi)$.
Then $L^{\inv}=\Gr(\phi|_S)$ and $TL^{\anti}:=N_LL^{\inv}$ are defined accordingly.
In view of the description
$$
TL^{\anti}=N_{\Gr(\phi)}\Gr(\phi|_S)=\{(v,\phi_*v) \mid v\in N_WS\},
$$
the canonical bundle map
$$
N_WS \to TL^{\anti}, \quad v\mapsto (v,\phi_*v)
$$
is an isomorphism, implying that $TL^{\anti}$ is trivial as $N_WS$ is.
This completes Step 2.\\

From now on, we carefully follow the $K$-theoretic arguments in the proof of \cite[Theorem~3.11]{He12}.
We refer to it for relevant details and definitions.
Write $X:= (L_0^{\inv}\times \{0\})\cup (L_1^{\inv}\times \{1\})\subset M^{\inv}\times [0,1]$, and let $J$ be an almost complex structure on $M^{\inv}=S\times S$.
Choose a complex trivialization of $TM^{\anti}|_X$ whose restriction to $(TL_0^{\anti}\times \{0\}) \cup (JTL_1^{\anti}\times \{1\})$ is a preferred real trivialization (induced by trivializations of $TL_i^{\anti}$) and we want to show the relative vector bundle $[(TM^{\anti})_{\rel}]\in \widetilde{K}^0((M^{\inv}\times [0,1],X)\cong \widetilde{K}^0((M^{\inv}\times [0,1])/X)$ is trivial, where $\widetilde{K}^0(\cdot)$ denotes the $K$-theory given in \cite[Section~6]{He12}.
Then the trivialization extends to that of $TM^{\anti}$, and the claim follows from \cite[Proposition~7.1]{He12}.\\\\
{\bf Step 3.} \emph{$[(TM^{\anti})_{\rel}]\in \widetilde{K}^0((M^{\inv}\times [0,1])/X)$ is a torsion element.}

Since the reduced Chern character
$$
\widetilde{\ch}\colon \widetilde{K}^0((M^{\inv}\times [0,1])/X)\otimes \Q \to \widetilde{\Ho}^*((M^{\inv}\times [0,1])/X;\Q)
$$
is an isomorphism, it is enough to show that the Chern classes of $(TM^{\anti})_{\rel}$ are trivial.
We first claim that the map $j^*\colon \Ho^i(M^{\inv}\times [0,1])\to \Ho^i(X)$ induced by the inclusion is surjective for all $i\ge 1$.
Note that the inclusion is given by
$$
j\colon (\Gr(\phi_0|_S)\times \{0\})\cup (\Gr(\phi_1|_S)\times \{1\}) \hookrightarrow (\widetilde{S\times S})\times [0,1].
$$
Since $\Gr(\phi_0|_S)$ and $\Gr(\phi_1|_S)$ are diffeomorphic to $S$, and they are graphs in the product space $\widetilde{S\times S}$, the map $j^*$ is identified with
$$
\Ho^i(S)\otimes \Ho^i(S) \to \Ho^i(S\sqcup S)\cong \Ho^i(S)\oplus \Ho^i(S),\quad \alpha\otimes 1 + 1\otimes \beta \mapsto \alpha + \beta,
$$
yielding that $j^*$ is surjective in degrees $i\ge 1$.
Let $q\colon M^{\inv}\to (M^{\inv}\times [0,1])/X$ denote the quotient map.
Since the Chern class $c_i(TM^{\anti})$ vanishes for all $i\ge 1$ and $TM^{\anti}=q^*((TM^{\anti})_{\rel})$, we have $q^*c_i((TM^{\anti})_{\rel})=0$.
From a relative long exact sequence
$$
\to \widetilde{\Ho}^m((M^{\inv}\times [0,1])/X)\to \widetilde{\Ho}^m(M^{\inv})\to \widetilde{\Ho}^m(X)\to \widetilde{\Ho}^{m+1}((M^{\inv}\times [0,1])/X)\to,
$$
the map $q^*\colon \widetilde{\Ho}^m((M^{\inv}\times [0,1])/X)\to \widetilde{\Ho}^m(M^{\inv})$ is injective for $m\ge 2$.
We thus conclude that $c_i((TM^{\anti})_{\rel})=0$ for all $i$, as asserted.\\\\
{\bf Step 4.} \emph{$\widetilde{K}^0((M^{\inv}\times [0,1])/X)$ is torsion-free.}

As a result of Atiyah--Hirzebruch \cite[Proposition~6.10]{He12}, it suffices to show that $\Ho^i((M^{\inv}\times [0,1])/X)$ is torsion-free and finitely generated for all $i\ge 0$.
Since $q^*$ is injective for $m\ge 2$ and $\widetilde{\Ho}^*(M^{\inv})\cong \widetilde{\Ho}^*(S)\otimes \widetilde{\Ho}^*(S)$ is free abelian by the assumption on $S$, the group $\widetilde{\Ho}^m((M^{\inv}\times [0,1])/X)$ is torsion-free and finitely generated for $m\ge 2$.
To deal with the case $m=1$, we investigate the long exact sequence
\begin{align*}
0=\widetilde{\Ho}^0(M^{\inv}\times [0,1]) &\to \widetilde{\Ho}^0(X) \cong \widetilde{\Ho}^0(S\sqcup S)\cong \Z \\
& \to \widetilde{\Ho}^{1}((M^{\inv}\times [0,1])/X)\to \widetilde{\Ho}^{1}(M^{\inv}\times [0,1]) \to.
\end{align*}
Since $\widetilde{\Ho}^{1}(M^{\inv}\times [0,1])$ is free abelian, so is the image of $q^*$.
Now, in view of the short exact sequence
$$
0 \to \Z \to  \widetilde{\Ho}^{1}((M^{\inv}\times [0,1])/X)  \to \im q^* \to 0,
$$
the first homology group $\widetilde{\Ho}^{1}((M^{\inv}\times [0,1])/X)$ is torsion-free and finitely generated.\\

Combining Steps 3 and 4, we deduce that $[(TM^{\anti})_{\rel}]\in \widetilde{K}^0((M^{\inv}\times [0,1])/X)$ is trivial, which completes the proof of the claim.\\\\
Now let $\phi\in\Symp(W,\lambda)$ commute with $\sigma$.
We assume that the hypersurface $\Sigma$ introduced in Section~\ref{sec: lagrfloer} is invariant under $\sigma\times\sigma$.
Consider admissible Lagrangians $\Delta_S=\Gr(\mathds{1}_S)$ and $\Gr(\phi|_S)$ in $\widetilde{S\times S}$, given by the invariant parts of $\Delta=\Gr(\mathds{1}_W)$ and $\Gr(\phi)$, respectively.
In view of the isomorphisms from Theorem~\ref{thm: can_isom},
$$
\HF(\phi)\cong \HF(\Delta,\Gr(\phi))\quad \text{and} \quad \HF(\phi|_S)\cong \HF(\Delta_S,\Gr(\phi|_S)),
$$
it suffices to show the inequality
$$
	\dim \HF(\Delta,\Gr(\phi))\ge \dim \HF(\Delta_S,\Gr(\phi|_S)).
$$
This follows from Theorem~\ref{thm: seidel_smith} together with the claim above ($\phi_0=\mathds{1}$ and $\phi_1=\phi$).
\end{proof}

\begin{remark}
In Theorem~\ref{thm: smith_floer} the case where the fixed locus $S$ is not connected also holds with a generalization of stable normal trivializations, see \cite[Section~3.5]{SS}.
\end{remark}


\section{Milnor fibres of the $A_k$-singularity, and all that}\label{sec: milnorfibre}
In this section, we recollect the geometry of Milnor fibres of $A_k$-singularities with symplectic involutions.
This serves as a local model for a compact neighborhood of an $A_k$-configuration of Lagrangian spheres in a Liouville domain, where we will employ Theorem~\ref{thm: smith_floer}.
Subsequently, we discuss Penner-type diffeomorphisms on compact oriented surfaces, which provide explicit pseudo-Anosov maps, and their generalization to Liouville domains.
\subsection{Dehn twists via $A_k$-configurations in Milnor fibres}\label{sec: milnorfibres}
We follow the descriptions given in \cite[Section~6c]{KhSe02}.
For a positive integer  $k\ge 1$ and $\epsilon>0$ small enough, we define the \emph{Milnor fibre of the $A_k$-singularity} as
$$
V=\left\{\boldsymbol{z}=(z_0,\dots,z_n)\in \C^{n+1} \mid   z_0^2+\dots+z_{n-1}^2+z_n^{k+1}=\epsilon,\ |\boldsymbol{z}|\le 1 \right\}
$$
endowed with the standard Liouville form $\lambda_{\st}=\frac{i}{4}\sum_j(z_jd\bar{z}_j-\bar{z}_jdz_j)$.
Then $(V,\lambda_{\st})$ is a Liouville domain of dimension $2n$ equipped with the exact symplectic involution
\begin{equation}\label{eq: involution}
\sigma(z_0,\dots,z_n) = (-z_0,\dots,-z_{n-2},z_{n-1},z_n).	
\end{equation}
This involution also appears in \cite[Lemma~6.14]{KhSe02}.
The fixed locus of $\sigma$ is the double branched cover of a disc at $(k+1)$ points, namely
$$
S:=\Fix(\sigma)=\left\{\boldsymbol{z}=(0,\dots,0,z_{n-1},z_n)\in \C^{n+1} \mid z_{n-1}^2 + z_n^{k+1} = \epsilon,\ |\boldsymbol{z}|\le 1 \right\},
$$
and it is a compact Riemann surface of genus $g$ with $m$ boundary circles, where $m,g$ are determined as follows: $\displaystyle(m,g)=\Big(1,\frac{k}{2}\Big)$ if $k$ is even, and $\displaystyle(m,g)=\Big(2,\frac{k-1}{2}\Big)$ if $k$ is odd. 
Note that $(S,\lambda_{\st}|_S)$ is a Liouville domain with $\p S=\p V\cap S$.

\begin{lemma}\label{lem: configuration} There is an $A_k$-configuration of $\sigma$-invariant Lagrangian spheres $L_1,\dots,L_k$ in~$V$, which descends to an $A_k$-configuration of Lagrangian circles $\ell_i:=L_i\cap S$ in $S$.
\end{lemma}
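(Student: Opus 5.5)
We will use the standard matching-path construction of Khovanov--Seidel. Consider the projection $\pi\colon V\to\C$, $\pi(\boldsymbol z)=z_n$. Its fibre over $z_n=w$ is the affine quadric $\{z_0^2+\dots+z_{n-1}^2=\epsilon-w^{k+1}\}$. This quadric degenerates exactly at the $k+1$ roots $w_0,\dots,w_k$ of $w^{k+1}=\epsilon$.

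\textbf{Choice of paths and spheres.} Order the roots along a simple arc, for instance by choosing $\epsilon$ so that they can be connected successively. Let $\gamma_i$ be a straight (or embedded) path from $w_{i-1}$ to $w_i$ for $i=1,\dots,k$, chosen so that:
\begin{itemize}
\item consecutive paths meet only at a shared endpoint;
\item non-consecutive paths are disjoint.
\end{itemize}
Over each interior point $w\in\gamma_i$, set $c(w)=\epsilon-w^{k+1}$ and pick a square root $\sqrt{c(w)}$ varying continuously. Define
$$
L_i=\{\boldsymbol z : z_n\in\gamma_i,\ (z_0,\dots,z_{n-1})\in\sqrt{c(z_n)}\,\R^n\},
$$
so that the fibre over an interior point is the real sphere $S^{n-1}$, rescaled, i.e. the vanishing cycle. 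This is the Lagrangian matching sphere associated to $\gamma_i$. Rescaling $\epsilon$ small (or shrinking the paths) ensures $|\boldsymbol z|\le 1$ on $L_i$. Lagrangianity is the usual check: $\gamma_i$ is a matching path, and its parallel transport preserves the real spheres, because the paths can be taken along which $c(w)$ has locally constant argument after adjusting. Alternatively, cite \cite[Section~6c]{KhSe02} and \cite[Lemma~6.14]{KhSe02} directly for the $A_k$-configuration and its Lagrangian property.

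\textbf{Intersection pattern.} The pattern follows from the paths. Since non-consecutive paths are disjoint, $L_i\cap L_j=\emptyset$ for $|i-j|\ge2$. For $|i-j|=1$, the paths share only the endpoint $w_i$. Over $w_i$ the fibre is the cone $\{\sum z_j^2=0\}$, and both spheres collapse to its singular point $0$. So $L_i\cap L_{i+1}$ is a single point, and transversality there is the standard fact for matching spheres whose paths meet transversely at a critical value.

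\textbf{Invariance and the descended configuration.} The involution $\sigma$ preserves $z_n$ and the real form, since negating real coordinates keeps a vector in $\sqrt{c}\,\R^n$. Hence each $L_i$ is $\sigma$-invariant. Its fixed part is
$$
\ell_i=L_i\cap S=\{z_0=\dots=z_{n-2}=0,\ z_{n-1}\in\pm\sqrt{c(z_n)}\cdot\R,\ z_n\in\gamma_i\}.
$$
The fibre of $\ell_i$ over an interior point is the $0$-sphere $\{\pm\sqrt{c}\}$, and these two points collapse at the endpoints. So $\ell_i$ is a circle: it is the matching circle of $\gamma_i$ in the double branched cover $S\to D$. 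Moreover $\ell_i$ is Lagrangian, being one-dimensional in a surface. The same path combinatorics give $|\ell_i\cap\ell_{i+1}|=1$ transversely, namely at the branch point over $w_i$, and $\ell_i\cap\ell_j=\emptyset$ otherwise.

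\textbf{Main obstacle.} The main subtlety is making $L_i$ genuinely Lagrangian for the restricted standard form rather than merely totally real. I would handle this by choosing each $\gamma_i$ along which $\arg c(w)$ is constant, so that $L_i$ is a component of the fixed set of an antisymplectic conjugation. Failing that, I would isotope to Khovanov--Seidel's exact model and appeal to their construction.
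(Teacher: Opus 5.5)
Your construction is the paper's: the Lefschetz fibration $\pi=z_n$, the matching sphere $L_\gamma$ made of the fibrewise real spheres $\sqrt{c(w)}\,S^{n-1}$ over a path between consecutive critical values, $\sigma$-invariance because each of these spheres is $\sigma$-invariant, and $\ell_i$ as the matching circle of $\gamma_i$ in $S$. Your description of the fibre of $\ell_i$ as the $0$-sphere $\{\pm\sqrt{c}\}$ is the accurate version of the paper's remark about $\Fix(\sigma|_{\Sigma_z})$. For the Lagrangian property of $L_\gamma$ the paper simply cites \cite[Lemma~6.12]{KhSe02}, which is your stated fallback. With that citation your argument is complete, and your treatment of the intersection pattern and of transversality at the shared critical point is fine.

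You should discard your ``main obstacle'' paragraph, together with the sentence attributing Lagrangianity to a locally constant argument of $c(w)$.

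There is no obstacle. Parametrize $L_\gamma$ by $(a(t)\xi,\gamma(t))$ with $a=\sqrt{c\circ\gamma}$ and $\xi\in S^{n-1}\subset\R^n$. The tangent vectors are $(a\eta,0)$ with $\eta\in\R^n$, $\eta\cdot\xi=0$, and $(a'\xi,\gamma')$. The form $\omega(u,v)=\mathrm{Im}\sum_j\bar u_jv_j$ vanishes on every pair, because $|a|^2\,\eta_1\cdot\eta_2$ is real and $\eta\cdot\xi=0$. So $L_\gamma$ is Lagrangian for the restricted standard form for \emph{every} matching path. That parallel transport preserves the real spheres is a consequence of this, not of a phase condition.

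Moreover, the fix you propose cannot work. Suppose $\arg(\epsilon-w^{k+1})$ is constant on the interior of a path joining two distinct roots; then $w^{k+1}$ stays on a line through $\epsilon$.
\begin{itemize}
\item If that line misses $0$, its preimage under $w\mapsto w^{k+1}$ is $k+1$ disjoint curves, each containing exactly one root, so no such path exists.
\item If the line contains $0$, it is the real axis, whose preimage is a union of rays from the origin. Every such path from $w_{i-1}$ to $w_i$ must then run through $w=0$ and contain the segments $[0,w_{i-1}]$ and $[0,w_i]$.
\end{itemize}
In the second case $\gamma_i$ and $\gamma_{i+1}$ overlap along $[0,w_i]$. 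Hence $L_i\cap L_{i+1}$ contains a whole family of spheres, and no $A_k$-configuration arises.
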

\begin{proof}
The construction is fairly well-known: the Lagrangian spheres arise from vanishing cycles of a Lefschetz fibration, see \cite[Section~6c]{KhSe02}.
Take the disc $D=\{z\in \C \mid |z^{k+1}-\epsilon|^2+|z|^2\le 1 \}$ and the set of $(k+1)$ points $\Delta=\{z\in \C\mid z^{k+1}=\epsilon\}$.
Consider the Lefschetz fibration
$$
\pi\colon V \to D,\quad \pi(z_0,\dots,z_n)=z_n
$$
whose regular fibre is symplectomorphic to a disc cotangent bundle of $S^{n-1}$.
Given a smooth simple path $c$ in $D$ intersecting $\Delta$ only at its endpoints, we can  parallel transport the subset corresponding to the zero section
$$
\Sigma_z=\left\{(z_0,\dots,z_{n-1},z)\in V \,\Big|\, 
\begin{array}{l}
|z_0|^2+\dots+|z_{n-1}|^2=|z^{k+1}-\epsilon|, \\
z_i\in \sqrt{\epsilon-z^{k+1}}\R \text{ for $i=0,\dots,{n-1}$}
\end{array}
\right\}
$$
of the fibre $Q_z=\{\boldsymbol{z}\in V \mid  z_0^2+\dots+z_{n-1}^2+z^{k+1}=\epsilon\}=\pi^{-1}(z)$ at a regular value $z\in c$,
so that we obtain a Lagrangian sphere $L_c=\bigcup_{z\in c} \Sigma_z$ in $V$, see  \cite[(a) of Lemma~6.12]{KhSe02}.
Since $\Sigma_z$ is $\sigma$-invariant for all $z\in D$, so is $L_{c}$.
Taking minimally intersecting curves $c_1,\dots,c_k$ in~$D$ corresponding to an $A_k$-configuration as in Figure~\ref{fig: minimal_curves},
\begin{figure*}[t]
    \centering
\begin{overpic}[width=5cm]{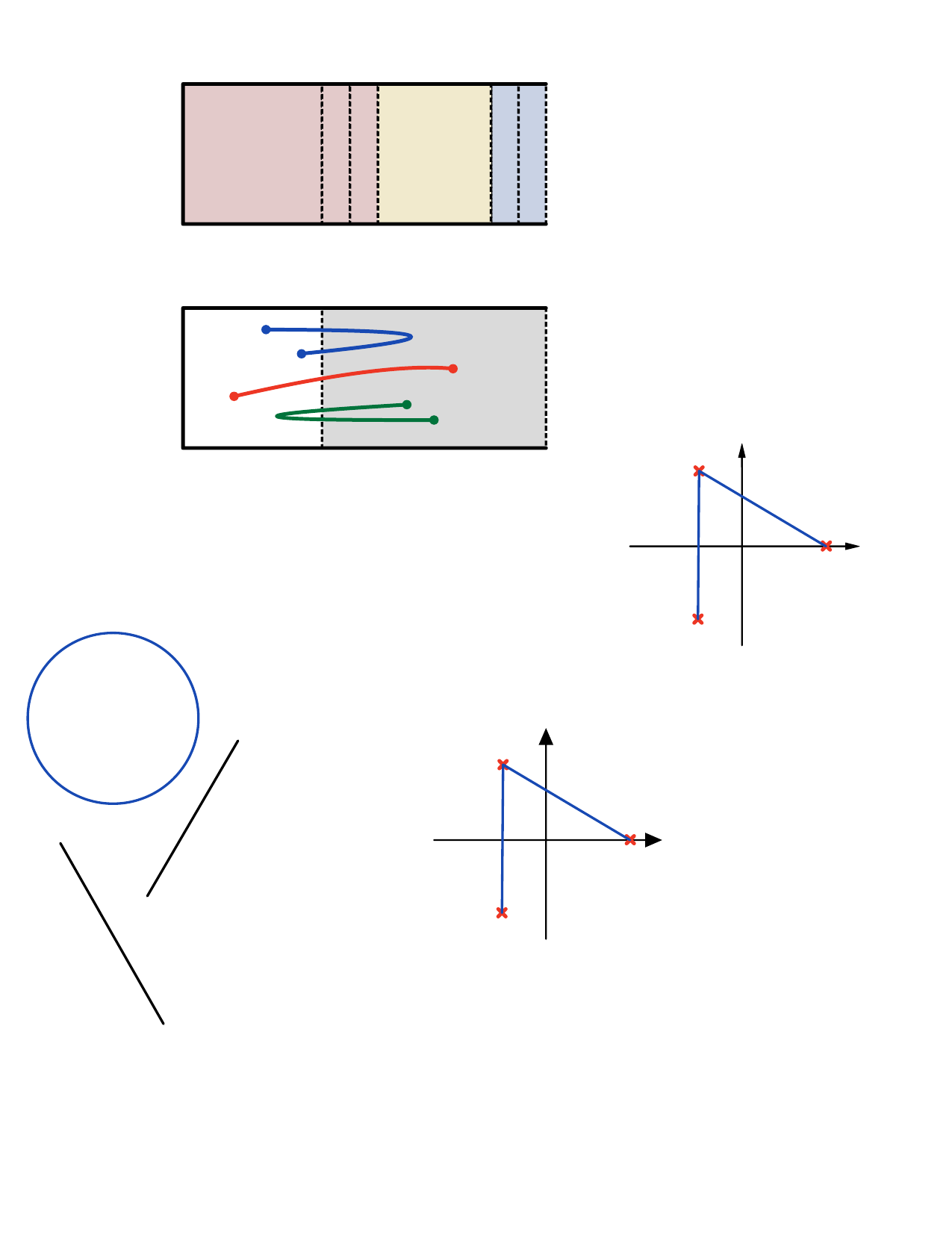}
\put(25, 70){$c_2$}
\put(88, 90){$c_1$}
\end{overpic}
\vspace{-0.5cm}
    \caption{Minimal curves $c_1$ and $c_2$ for the case $k=2$.}
    \label{fig: minimal_curves}
\end{figure*}
we obtain an $A_k$-configuration of Lagrangian spheres $L_{c_1},\dots,L_{c_k}$ having the desired properties.
Specifically, since $\Fix(\sigma|_{\Sigma_z})$ is a circle or a single point, depending on whether $z\in c$ is a regular value of~$\pi$, the Lagrangian circles $\ell_i=\bigcup_{z\in c}\Fix(\sigma|_{\Sigma_z})$ form the same $A_k$-configuration as before.
\end{proof}
\begin{remark}\label{rem: inv_ak_spheres}
In a similar vein, the Lagrangian spheres in Lemma~\ref{lem: configuration} are invariant under the exact symplectic involutions $\sigma_j(z_0,\dots,z_n)=(-z_0,\dots,-z_{n-j-1},z_{n-j},\dots ,z_n)$ for all $j=1,\dots,n$, where $\sigma_1=\sigma$ and $\sigma_n=\mathds{1}_V$.
\end{remark}
We discuss the geometric setup for the Dehn twists that we consider.
Let $L_1,\dots,L_k\subset V$ be an $A_k$-configuration of Lagrangian spheres as in Lemma~\ref{lem: configuration}.
For each $i=1,\dots,k$, we can choose a Dehn twist $\tau_{L_i}\in \Symp(V,\lambda_{\st})$ along $L_i$ such that $\tau_{L_i}$ commutes with $\sigma_j$ for all $j=1,\dots,n$, and $\tau_{L_i}|_S\in \Symp(S,\lambda_{\st}|_S)$ coincides with a Dehn twist $\tau_{\ell_i}$ along $\ell_i$, that is, $\tau_{L_i}|_S=\tau_{\ell_i}$ for some parametrisation of $\ell_i$, see the proof of \cite[Lemma~4.9]{To15}.
The above construction of the Dehn twists relies on the commutativity of $\sigma_j$ and $\sigma_i$.
We abbreviate
$$
\Symp(V,\lambda_{\st};{\bm \sigma})=\{\phi\in \Symp(V,\lambda_{\st}) \mid \text{$\phi$ commutes with $\sigma_j$ for all $j=1,\dots,n$} \}.
$$
The following result is immediate from Lemma~\ref{lem: configuration} and the setup above:
\begin{lemma}\label{lem: lift_dehntwists}
Any product $\phi=\tau_{L_{i_1}}^{(-1)}\circ \dots \circ \tau_{L_{i_m}}^{(-1)}\in \Symp(V,\lambda_{\st};{\bm \sigma})$ of (positive or negative) Dehn twists along~$L_{i_r}$ restricts to the product of Dehn twists along $\ell_{i_r}$,
$$
\phi|_S=\tau_{\ell_{i_1}}^{(-1)}\circ \dots \circ \tau_{\ell_{i_m}}^{(-1)}\in \Symp(S,\lambda_{\st}|_S).
$$
Conversely, any product $\phi_S\in \Symp(S,\lambda_{\st}|_S)$ of (positive or negative) Dehn twists along~$\ell_{i_r}$ lifts to the product $\phi\in \Symp(V,\lambda_{\st};{\bm \sigma})$ of the corresponding Dehn twists along~$L_{i_r}$.
\end{lemma}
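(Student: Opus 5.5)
The argument will be almost purely formal, resting on the setup fixed just before the statement. There, for each $i=1,\dots,k$, the Dehn twist $\tau_{L_i}\in\Symp(V,\lambda_{\st})$ was chosen to commute with every $\sigma_j$ and to satisfy $\tau_{L_i}|_S=\tau_{\ell_i}$. Here $\ell_i=L_i\cap S$ is the $A_k$-configuration of Lagrangian circles from Lemma~\ref{lem: configuration}. The plan is to check, in order, closure of $\Symp(V,\lambda_{\st};{\bm \sigma})$ under products and inverses, compatibility of restriction to $S$ with composition, and then the converse direction.

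\textbf{Step 1 (closure).} I will first note that $\Symp(V,\lambda_{\st};{\bm\sigma})$ is a subgroup of $\Symp(V,\lambda_{\st})$. If $\phi,\psi$ commute with each $\sigma_j$, then so do $\phi\circ\psi$ and $\phi^{-1}$. Since each $\tau_{L_i}$ lies in this subgroup, so does $\tau_{L_i}^{-1}$, and hence so does any word $\phi=\tau_{L_{i_1}}^{\pm1}\circ\dots\circ\tau_{L_{i_m}}^{\pm1}$. Exactness and compact support in $\Int V$ are preserved under composition and inversion, so $\phi\in\Symp(V,\lambda_{\st})$.

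\textbf{Step 2 (restriction is a homomorphism).} Every element of $\Symp(V,\lambda_{\st};{\bm\sigma})$ commutes with $\sigma=\sigma_1$, so it preserves $S=\Fix(\sigma)$: if $\sigma(x)=x$ then $\sigma(\phi(x))=\phi(\sigma(x))=\phi(x)$. Restriction to $S$ is therefore a group homomorphism
$$\Symp(V,\lambda_{\st};{\bm\sigma})\to\Symp(S,\lambda_{\st}|_S).$$
The target is correct because $\phi|_S$ is a symplectomorphism of $S$ with $(\phi|_S)^*\lambda_{\st}|_S-\lambda_{\st}|_S$ exact, and with support in $\Int V\cap S\subset\Int S$, using $\p S=\p V\cap S$. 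Consequently $(\tau_{L_i}^{-1})|_S=(\tau_{L_i}|_S)^{-1}=\tau_{\ell_i}^{-1}$, and restricting a word gives the corresponding word in the $\tau_{\ell_i}^{\pm1}$. This proves the first assertion.

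\textbf{Step 3 (converse).} Given a word $\phi_S=\tau_{\ell_{i_1}}^{\pm1}\circ\dots\circ\tau_{\ell_{i_m}}^{\pm1}$, I will define $\phi$ as the same word in the $\tau_{L_{i_r}}^{\pm1}$. By Steps 1 and 2, $\phi\in\Symp(V,\lambda_{\st};{\bm\sigma})$ and $\phi|_S=\phi_S$.

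\textbf{Main obstacle.} Given the setup, I do not expect a genuine obstacle. The substantive input is the simultaneous choice of twists commuting with all $\sigma_j$ and restricting exactly to $\tau_{\ell_i}$, which is the construction cited from \cite[Lemma~4.9]{To15}. The only point needing care is interpreting ``Dehn twist along $\ell_i$'': the restriction is a Dehn twist for \emph{some} parametrisation of $\ell_i$, so the converse statement is to be read with those parametrisations fixed. This is harmless, since on a surface a Dehn twist is determined up to isotopy by the circle and the orientation of the twist.
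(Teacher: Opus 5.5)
Your proposal is correct and follows the paper's approach: the paper calls the lemma immediate from the choice of Dehn twists $\tau_{L_i}$ that commute with all $\sigma_j$ and satisfy $\tau_{L_i}|_S=\tau_{\ell_i}$. Your Steps 1--3 (subgroup closure, restriction to $S=\Fix(\sigma)$ being a homomorphism into $\Symp(S,\lambda_{\st}|_S)$, and lifting a word letter by letter) spell out exactly that bookkeeping, and your caveat about reading the converse with the parametrisations of $\ell_i$ fixed matches the paper's ``for some parametrisation of $\ell_i$''.
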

By virtue of the Smith-type inequality, we obtain the main result of this section:
\begin{theorem}\label{thm: milnor_smith}
For every $\phi\in \Symp(V,\lambda_{\st};{\bm \sigma})$ we have $h_{\floer}(\phi)\ge h_{\floer}(\phi|_S)$.
\end{theorem}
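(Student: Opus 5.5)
The plan is to apply the Smith-type inequality of Theorem~\ref{thm: smith_floer} to every iterate $\phi^k$, using the involution $\sigma=\sigma_1$ from \eqref{eq: involution}. Let $\phi\in \Symp(V,\lambda_{\st};{\bm \sigma})$. In particular $\phi$ commutes with $\sigma$, hence so does $\phi^k$ for every $k\ge 1$. Moreover $\phi^k$ preserves $S=\Fix(\sigma)$, and
$$
(\phi^k)|_S=(\phi|_S)^k\in \Symp(S,\lambda_{\st}|_S).
$$
So once the hypotheses of Theorem~\ref{thm: smith_floer} are checked for $(V,\lambda_{\st},\sigma)$, we obtain
$$
\dim \HF(\phi^k)\ge \dim \HF\big((\phi|_S)^k\big)
$$
for all $k$. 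Taking $\log$, dividing by $k$ and passing to $\limsup_{k\to\infty}$ then gives $h_{\floer}(\phi)\ge h_{\floer}(\phi|_S)$ directly from Definition~\ref{def: floer_entropy}.

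The main work is verifying the hypotheses, in the following order.
\begin{enumerate}
\item \emph{$\sigma$ is an exact symplectic involution.} Clearly $\sigma^2=\mathds{1}$. Since $\sigma$ only changes the signs of some coordinates, each term $z_jd\bar z_j-\bar z_jdz_j$ is invariant, so $\sigma^*\lambda_{\st}=\lambda_{\st}$. Also $\sigma$ preserves both the defining equation of $V$ and the condition $|\boldsymbol{z}|\le 1$.
\item \emph{$S$ is non-empty and connected.} As recorded above, $S$ is a compact Riemann surface of genus $g$ with $m\in\{1,2\}$ boundary circles. Being the double cover of a disc branched at $k+1\ge 2$ points, it is connected.
\item \emph{$\Ho^i(S)$ is free for $i\ge 1$.} A compact surface with non-empty boundary deformation retracts onto a wedge of circles. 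Hence $\Ho^1(S)$ is free abelian and $\Ho^i(S)=0$ for $i\ge 2$.
\item \emph{$N_VS$ is unitarily trivialisable.} This is a complex vector bundle of rank $n-1$ over $S$, which is homotopy equivalent to a $1$-dimensional CW complex. Since $U(n-1)$ is connected, every complex bundle over such a space is trivial. Choosing a hermitian metric compatible with $d\lambda_{\st}$ and the almost complex structure, we then get a unitary trivialization.
\end{enumerate}

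I expect step 4 to be the only point needing any care, and even that is soft. One must make sure that the "unitary" structure on $N_VS$ used in the stable normal trivialization is the one induced by a compatible almost complex structure; trivialising first as a complex bundle and then applying Gram--Schmidt handles this. A minor additional check is that $(S,\lambda_{\st}|_S)$ is a Liouville domain with $\p S=\p V\cap S$ for $\epsilon$ small, which was already noted above.

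The remaining involutions $\sigma_j$ for $j\ge 2$ are not needed for this inequality. They only enter in ensuring that the Dehn twists constructed earlier lie in $\Symp(V,\lambda_{\st};{\bm \sigma})$.
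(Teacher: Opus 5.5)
Your proof is correct, but you apply Theorem~\ref{thm: smith_floer} differently from the paper.

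The paper descends the chain of Milnor fibres $V=V_n\supset V_{n-1}\supset\cdots\supset V_1=S$. It applies the Smith-type inequality $n-1$ times, each time to an involution of $V_j$ (a restriction of one of the $\sigma_i$) whose fixed locus $V_{j-1}$ has complex codimension one. The normal bundle at each step is then a complex line bundle, which is trivial because the Milnor fibres have vanishing first Chern class. Freeness of $\Ho^i(V_{j-1})$ comes from $V_{j-1}$ being homotopy equivalent to a wedge of spheres.

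You instead apply the theorem once, to $\sigma=\sigma_1$ on $V$, with a normal bundle of rank $n-1$. You get triviality from $S$ having the homotopy type of a wedge of circles while $BU(n-1)$ is simply connected. One can even see the triviality directly. At points of $S$ the differential of the defining equation has no components in the $z_0,\dots,z_{n-2}$ directions. Hence the $(-1)$-eigenspace of $d\sigma$ there is $\mathrm{span}(\partial_{z_0},\dots,\partial_{z_{n-2}})$, so $N_VS$ is literally the trivial Hermitian bundle.

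Your route is shorter and avoids the intermediate fibres and the Chern class computation. It also uses only that $\phi$ commutes with $\sigma_1$, so it proves the inequality for a larger class of maps than $\Symp(V,\lambda_{\st};{\bm \sigma})$. The paper's route yields the intermediate inequalities $h_{\floer}(\phi|_{V_j})\ge h_{\floer}(\phi|_{V_{j-1}})$ as a byproduct, and keeps each bundle check at the level of line bundles. For the statement at hand, nothing is lost by your one-step application.
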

\begin{proof}
This readily follows from applying Theorem~\ref{thm: smith_floer} inductively.
For $j=1,\dots,n$, let
$$
V_j=\left\{\boldsymbol{z}=(0,\dots,0,z_{n-j},\dots,z_n)\in \C^{n+1} \,\Big|\,   z_{n-j}^2+\dots+z_{n-1}^2+z_n^{k+1}=\epsilon,\ |\boldsymbol{z}|\le 1 \right\}\subset V
$$
be the Milnor fibre of the $A_k$-singularity of dimension $2j$.
Note that $\Fix(\sigma_j|_{V_j})=V_{j-1}$, $V_1=S$, and $V_n=V$.
Note that the first Chern class $c_1(V_j)$ vanishes, and hence $c_1(N_{V_j}V_{j-1})=0$.
This implies that the normal bundle $N_{V_j}V_{j-1}$ is unitarily trivial, since it is a complex line bundle.
Moreover, $\Ho^i(V_{j-1})$ is free for all $i\ge 1$ as the Milnor fibre $V_{j-1}$ is homotopy equivalent to the wedge sum of spheres.
By Theorem~\ref{thm: smith_floer}, $h_{\floer}(\phi|_{V_j})\ge h_{\floer}(\phi|_{V_{j-1}})$ for all $j=2,\dots,n$, yielding the assertion.
\end{proof}

\subsection{Symplectomorphisms of Penner-type }\label{sec: penner_milnor}
Henceforth, we are interested in finding $\phi_S\in \pi_0\Symp(S,\lambda_{\st}|_S)$, which is a product of Dehn twists from an $A_k$-configuration and satisfies $h_{\floer}(\phi_S)>0$.
By the classification result of Thurston \cite{Th88}, there is a trichotomy for mapping classes of diffeomorphisms on a compact oriented surface $\Sigma$, that is, every $[\phi]\in\pi_0\Diff(\Sigma)$ belongs to one of the following types: \emph{periodic}, \emph{reducible}, or \emph{pseudo-Anosov}.
The reader is referred to a comprehensive exposition \cite{FaMa12} for further details.
We focus only on the latter type, motivated by the following result of Cotton-Clay:
\begin{theorem}[{\cite[Corollary~1.7]{Co09}}]\label{thm: cotton_clay}
Let $\Sigma$ be a compact symplectic surface with boundary, and let $\lambda$ be a Liouville form on $\Sigma$.
Then $[\phi]\in\pi_0\Symp(\Sigma,\lambda)$ is pseudo-Anosov if and only if $h_{\floer}(\phi)>0$.
\end{theorem}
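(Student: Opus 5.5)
The plan is to reduce both directions to counting fixed points of a well-chosen representative in the mapping class, combining Thurston's trichotomy with Nielsen fixed point theory. First I would reduce from symplectic to smooth isotopy. On a surface, $\Symp(\Sigma,\lambda)$ (compactly supported in the interior, exact) is homotopy equivalent to the group of diffeomorphisms supported in the interior. This uses Moser's argument to make a diffeomorphism area-preserving. Exactness can then be arranged by a compactly supported flux correction, which is possible because $H^1_c(\Int\Sigma)$ is hit by Hamiltonian-type corrections up to the boundary, as in the standard surface case. Hence $\pi_0\Symp(\Sigma,\lambda)$ is the mapping class group relative to the boundary. So $h_{\floer}$ is an invariant of the smooth relative mapping class, and I may pick convenient representatives in each Nielsen--Thurston type. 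Because boundary twists are allowed, I would use the convention in which \emph{pseudo-Anosov} refers to the class of the map after capping or collapsing the boundary, as in Cotton-Clay.

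For the direction ``pseudo-Anosov $\Rightarrow h_{\floer}>0$'', I take the singular pseudo-Anosov representative $\psi$ with dilatation $\lambda_\psi>1$. I then modify it near the boundary and near the singularities to obtain a smooth exact symplectomorphism $\phi$, following Cotton-Clay's canonical form: a slight rotation near the boundary components in the direction matching the Hamiltonian slope convention, and a perturbation of each $p$-pronged singularity. The key claim is that for this representative, and for each iterate $\phi^k$, distinct fixed points lie in distinct Nielsen classes, with no Floer differential between them. The differential vanishes because any twisted Floer cylinder connecting two fixed points would project to a Nielsen equivalence. The exception is within the controlled clusters created by smoothing a singularity, where the local contribution can be computed directly.

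This gives $\dim\HF(\phi^k)\ge N(\psi^k)-C$ for a constant $C$ independent of $k$ accounting for boundary and singular clusters. Here $N(\psi^k)$ is the number of essential (Nielsen) classes, which for a pseudo-Anosov equals the number of fixed points of $\psi^k$ up to bounded error. By the Markov partition (or the trace of the transition matrix), $N(\psi^k)\sim\lambda_\psi^k$, so $h_{\floer}(\phi)\ge\log\lambda_\psi>0$. I expect this step to be the main obstacle: proving that the differential vanishes uniformly in $k$, and that the boundary and singular corrections stay bounded, requires an action or Nielsen-class separation argument that works for all iterates simultaneously.

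For the converse, I argue by contrapositive using Thurston. If $[\phi]$ is periodic up to boundary twisting, then a power $\phi^m$ is a product of boundary-parallel Dehn twists, and $\dim\HF(\phi^k)$ grows at most linearly in $k$, as in Example~\ref{ex: singledehn}. If $[\phi]$ is reducible with no pseudo-Anosov component, I cut along the reducing multicurve. A power then acts on each piece periodically and by twists along the reducing curves. Using exact triangles (Seidel's triangle \eqref{eq: seidel_exact_tri} for the twists along the curves, and the local-to-global triangle of Theorem~\ref{thm: exact_triangle} for the pieces), I would bound $\dim\HF(\phi^{km+r})$ polynomially in $k$. Polynomial growth forces $h_{\floer}=0$, and since $h_{\floer}(\phi^m)=m\,h_{\floer}(\phi)$ up to the bounded-error argument of Example~\ref{ex: singledehn}, passing to powers is harmless. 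Thus $h_{\floer}>0$ forces the class to be pseudo-Anosov.
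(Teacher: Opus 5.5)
The paper gives no proof of this statement. It is quoted from Cotton-Clay, and only the implication ``pseudo-Anosov $\Rightarrow h_{\floer}(\phi)>0$'' is used, in the proof of Theorem~\ref{thm: mainthmA}. Your sketch of that implication follows Cotton-Clay's route: a smoothed canonical pseudo-Anosov representative; a splitting of the Floer complex by Nielsen classes, since a twisted Floer cylinder is a homotopy of twisted loops; singular and boundary clusters contributing an error bounded independently of $k$; and $|\Fix(\psi^k)|\sim\lambda_\psi^k$ from a Markov partition. You correctly identify uniformity in $k$ as the real work. Use only the convention of collapsing boundary circles to punctures. Capping them with discs changes the Nielsen--Thurston type; for example, point-pushing pseudo-Anosov maps become trivial.

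The genuine gap is in the converse, and it cannot be repaired as stated. Thurston's trichotomy does not split into ``periodic'' and ``reducible without pseudo-Anosov pieces''. A reducible class can have pseudo-Anosov components, and your contrapositive silently omits this case. In that case the implication $h_{\floer}(\phi)>0\Rightarrow$ pseudo-Anosov is false. For instance:
\begin{itemize}
\item let $\Sigma$ be the $A_3$ Milnor fibre $S$ (genus one, two boundary circles);
\item let $V\subset\Sigma$ be a one-holed torus containing $\ell_1\cup\ell_2$ with $\Sigma\setminus\Int V$ a pair of pants, and choose $\lambda$ so that $V$ is a Liouville subdomain;
\item put $\phi=\tau_{\ell_1}\tau_{\ell_2}^{-1}$, supported in $V$.
\end{itemize}
Then $\partial V$ is an essential, non-peripheral, $\phi$-invariant curve, so $[\phi]$ is reducible. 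Yet Theorem~\ref{thm: localtoglobal} together with the forward implication applied on $V$ gives $h_{\floer}(\phi)=h_{\floer}(\phi|_V)>0$.

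What your case analysis can establish is only that $h_{\floer}(\phi)>0$ if and only if the Nielsen--Thurston decomposition of $[\phi]$ contains a pseudo-Anosov component. This is harmless for the paper, which never uses the converse. Even for that corrected statement, the tools you propose need replacing:
\begin{itemize}
\item Theorem~\ref{thm: exact_triangle} requires $\phi$ to be the identity off the subdomain, which periodic pieces are not.
\item Seidel's triangle \eqref{eq: seidel_exact_tri} needs exact Lagrangian circles. A reducing curve $c$ bounding a subsurface $\Sigma'\subset\Int\Sigma$ (for example, any separating curve, including boundary-parallel ones, when $\partial\Sigma$ is connected) has $\int_c\lambda$ equal to $\pm$ the area of $\Sigma'$, which is nonzero.
\item The identity $h_{\floer}(\phi^m)=m\,h_{\floer}(\phi)$ is not supplied by the argument of Example~\ref{ex: singledehn}. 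You do not need it, though, if you bound every residue class $km+r$ directly.
\end{itemize}
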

Inspired by the celebrated work of Penner, we construct a pseudo-Anosov map $\phi_S\in \Symp(S,\lambda_{\st}|_S)$ as a product  of Dehn twists from an $A_k$-configuration.
This explicit construction allows us to lift it to obtain a map in $\Symp(V,\lambda_{\st};{\bm \sigma})$ via Lemma~\ref{lem: lift_dehntwists}.
Note that not all pseudo-Anosov classes arise from Penner's construction, see \cite[Section~1]{ShSt15}.

We shall recall the relevant notions from \cite{Pe88} and \cite[Section~6]{Co09}.
A \emph{multicurve} in a compact oriented surface $\Sigma$ with boundary is a collection of disjoint embedded (oriented) loops in~$\Sigma$.
We say that a collection of embedded loops in $\Sigma$ \emph{fills} $\Sigma$ if its complement in~$\Sigma$ is a disjoint union of (possibly punctured) discs with more than two edges.

\begin{theorem}[Penner's construction {\cite[Theorem~3.1]{Pe88}}]\label{thm: penner_construction}
Let $\mathcal{C}=\{c_i\}$ and $\mathcal{D}=\{d_j\}$ be multicurves in a compact oriented surface $\Sigma$ with boundary such that $\mathcal{C}\cup\mathcal{D}$ fills $\Sigma$.
If $\phi$ is a product of positive Dehn twists along the $c_i$'s and negative Dehn twists along the $d_j$'s such that all loops in $\mathcal{C}\cup\mathcal{D}$ appear at least once as the generating loop of a Dehn twist, then $\phi$ is pseudo-Anosov.
\end{theorem}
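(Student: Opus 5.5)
The plan is Penner's original argument: build an invariant train track, show that the transition matrix of $\phi$ is Perron--Frobenius, and read off a pair of transverse invariant measured foliations. Alternatively one can use Thurston's flat-structure approach. Since $\mathcal{C}$ and $\mathcal{D}$ are multicurves, the loops within each family are disjoint. After an isotopy we may assume that every $c_i$ meets every $d_j$ minimally and transversely. The filling hypothesis then says that $\Sigma\setminus(\mathcal{C}\cup\mathcal{D})$ consists of discs and once-punctured (boundary-parallel) discs, none of them a bigon or a monogon.

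First I will smooth each intersection point of $\mathcal{C}\cup\mathcal{D}$ consistently, turning left in the same direction everywhere. This produces a (bigon) train track $\tau$ whose branches correspond to the arcs of $\mathcal{C}\cup\mathcal{D}$ between consecutive intersection points. Every $c_i$ and every $d_j$ is carried by $\tau$. The key local check is that $\tau$ carries the images under both $\tau_{c_i}$ and $\tau_{d_j}^{-1}$ of any curve it carries. This holds because a positive twist along a $\mathcal{C}$-curve and a negative twist along a $\mathcal{D}$-curve turn the same way at each crossing, which is compatible with the chosen smoothing. This is exactly why the signs must be opposite on $\mathcal{C}$ and $\mathcal{D}$.

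It is cleaner to record weights only on the curves. The cone of transverse measures on $\tau$ then contains the cone $\R_{\ge0}^{\mathcal{C}\cup\mathcal{D}}$, and the generators act on it by explicit matrices. Let $\Omega$ be the geometric intersection matrix of $\mathcal{C}\cup\mathcal{D}$. The twist along a curve $e$ acts by
\[
Q_e=I+D_e\Omega,
\]
where $D_e$ is the diagonal matrix with a single $1$ in position $e$. These matrices are nonnegative.

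Next, let $M$ be the product of the $Q_e$'s corresponding to the word $\phi$. Because every loop appears at least once and $\mathcal{C}\cup\mathcal{D}$ is connected (a consequence of filling), $\Omega$ is the adjacency matrix of a connected bipartite graph. Some power of $M$ is therefore strictly positive, so $M$ is Perron--Frobenius. Its Perron eigenvalue $\mu$ satisfies $\mu>1$, since each factor has unit diagonal and strictly increases some entries. The Perron eigenvector gives a $\phi$-invariant measured lamination $\mathcal{F}^u$ carried by $\tau$ and stretched by $\mu$. Running the same argument for $\phi^{-1}$ gives $\mathcal{F}^s$, scaled by $\mu^{-1}$; for $\phi^{-1}$ the roles of $\mathcal{C}$ and $\mathcal{D}$ are swapped and we use the dual track with the opposite smoothing.

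The main obstacle is showing that $\mathcal{F}^u$ and $\mathcal{F}^s$ fill $\Sigma$ and are transverse, so that $\phi$ is pseudo-Anosov rather than reducible with a pseudo-Anosov piece. I would argue as follows:
\begin{itemize}
\item The eigenvector is strictly positive, so $\mathcal{F}^u$ has full support on $\tau$.
\item Every complementary region of $\tau$ is a region of $\Sigma\setminus(\mathcal{C}\cup\mathcal{D})$, so none is a bigon or a monogon.
\item Hence $\mathcal{F}^u$ fills, meaning every essential closed curve meets it.
\item By Thurston's classification, a class with an invariant filling measured lamination that is scaled by $\mu>1$ cannot be periodic or reducible: an invariant reduction system would have zero intersection with $\mathcal{F}^u$, contradicting the filling property.
\end{itemize}
Therefore $\phi$ is pseudo-Anosov with stretch factor $\mu$.
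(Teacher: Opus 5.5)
Your route is Penner's own argument, and the paper relies on exactly that argument, since it gives no proof and simply cites \cite[Theorem~3.1]{Pe88}. Most of your outline is correct:
the invariant smoothing, the matrices $Q_e=I+D_e\Omega$, primitivity, $\mu>1$, and the final step via Thurston's classification. For primitivity, $M\ge I+\Omega$ entrywise because every curve occurs, so $M^N>0$ once $N$ exceeds the diameter of the connected intersection graph. For the final step, a curve $r$ of an invariant reduction system fixed by $\phi^n$ would satisfy $i(\mathcal{F}^u,r)=\mu^n\, i(\mathcal{F}^u,r)$, hence $i(\mathcal{F}^u,r)=0$.

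The gap is in the step you yourself single out as the main obstacle: the claim that $\mathcal{F}^u$ fills. You deduce that no complementary region of $\tau$ is a bigon because no region of $\Sigma\setminus(\mathcal{C}\cup\mathcal{D})$ is one, and that inference is false. With the consistent smoothing, at each crossing exactly two opposite quadrants become cusps and the other two become smooth. Going around a complementary region, the corners alternate between turns from a $\mathcal{C}$-edge onto a $\mathcal{D}$-edge and turns from a $\mathcal{D}$-edge onto a $\mathcal{C}$-edge, so they alternate between cusps and smooth points. Hence a region with $2m$ edges becomes a region of $\Sigma\setminus\tau$ with only $m$ cusps. An unpunctured disc with four edges is allowed by the filling hypothesis (for example, curves of slopes $0$ and $2$ on a once-holed torus), and it becomes a complementary bigon of $\tau$. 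This is exactly why $\tau$ is only a \emph{bigon} track, as you wrote yourself.

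Consequently, the train-track criterion you are implicitly using does not apply to $\tau$. That criterion says: positive weights on every branch, together with no complementary nullgons, monogons, bigons or once-punctured nullgons, imply that the carried lamination fills. The conclusion that $\mathcal{F}^u$ fills is still true, but it needs a genuinely additional ingredient. One option is the theory of measured bigon tracks, as in Penner's paper and in Penner--Harer. For instance, you could collapse the bigons of the fully weighted track to obtain a measured train track carrying the same lamination, and then check that its complementary regions are discs with at least three cusps or once-punctured discs with at least one cusp. Another option is a direct argument that $i(\phi^n(\gamma),r)\to\infty$ for every essential non-peripheral curve $r$ and a suitable carried $\gamma$. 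That argument again requires controlling efficient position with respect to a track that has bigons.

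Separately, your aside that Thurston's flat-structure construction would give an alternative proof does not cover this statement. That construction handles words in the two fixed multitwists along $\mathcal{C}$ and along $\mathcal{D}$. It does not handle products in which individual components of $\mathcal{C}$ or $\mathcal{D}$ are twisted separately and interleaved with twists along the other multicurve.
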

In this case, we say that $\phi$ is given by \emph{Penner's construction} with respect to the multicurves $\mathcal{C}$ and $\mathcal{D}$.
We emphasize that the order of two multicurves $\mathcal{C}$ and $\mathcal{D}$ cannot be discarded, as they correspond to \emph{positive} and \emph{negative} Dehn twists, respectively.

\begin{example}[$A_2$-singularity]\label{ex: pennerA2}
Let $V$ be the Milnor fibre of the $A_2$-singularity of dimension $2n\ge 4$.
Recall from Section~\ref{sec: milnorfibres} that the two-dimensional Milnor fibre of the $A_2$-singularity is a compact Riemann surface $S$ of genus one with one boundary circle.
If we choose an $A_2$-configuration $(L_1,L_2)\subset V$ from Lemma~\ref{lem: configuration} such that $\ell_1=L_1\cap S$ and $\ell_2=L_2\cap S$ are the meridian and the longitude (this is possible since $(\ell_1,\ell_2)$ corresponds to the union of zero sections in a plumbing of two disc cotangent bundles over $S^1$), then $\{\ell_1\}$ and $\{\ell_2\}$ are multicurves, and $\{\ell_1,\ell_2\}$ fills $S$, see Figure~\ref{fig: a2surface}.
\begin{figure*}[t]
    \centering
\begin{overpic}[width=15cm]{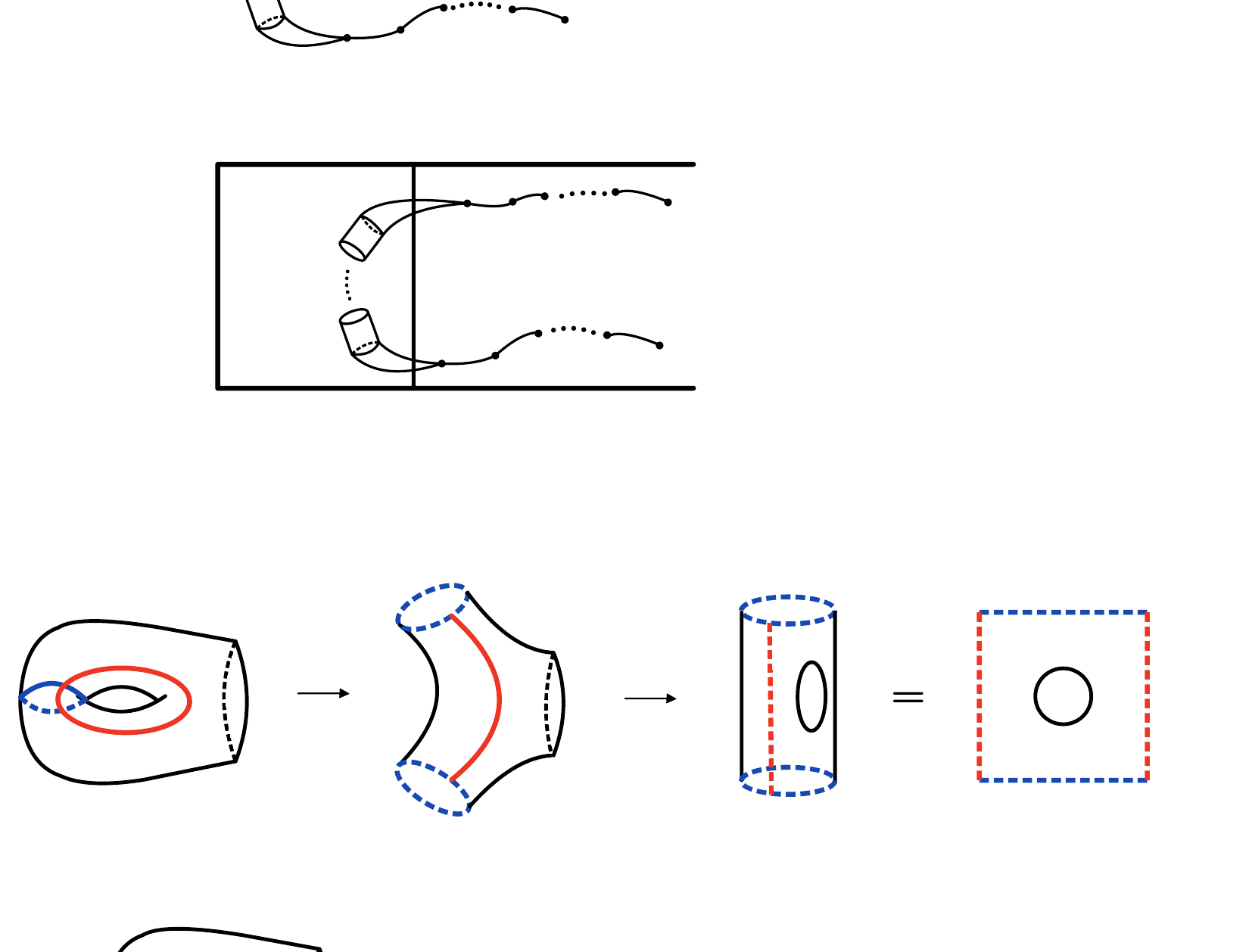}
\put(10, 55){\color{blue} \footnotesize $\ell_1$}
\put(65, 30){\color{red} \footnotesize $\ell_2$}
\put(28, 0){\small $k=2$}

\end{overpic}
    \caption{The complement of the $A_2$-configuration $(\ell_1,\ell_2)$ in the Milnor fibre $S$ for $k=2$ is a punctured disc with 4 edges.
    }
    \label{fig: a2surface}
\end{figure*}

Notice that the only possibilities for two multicurves $\mathcal{C},\mathcal{D}\subset \{\ell_1,\ell_2\}$ such that their disjoint union is $\{\ell_1,\ell_2\}$ and fills $S$ are either $\mathcal{C}=\{\ell_1\}$ and $\mathcal{D}=\{\ell_2\}$, or $\mathcal{C}=\{\ell_2\}$ and $\mathcal{D}=\{\ell_1\}$.
Therefore, every pseudo-Anosov map $\phi_S\in\Symp(S,\lambda_{\st}|_S)$ on $S$ arising from Penner's construction with respect to multicurves whose disjoint union is $\{\ell_1,\ell_2\}$ is of either the form
\begin{equation}\label{eq: phi_S_penner}
	\phi_S=\tau_{\ell_1}^{a_1}\tau_{\ell_2}^{-b_1}\cdots\tau_{\ell_1}^{a_m}\tau_{\ell_2}^{-b_m}
\quad\text{or}\quad \tau_{\ell_2}^{a_1}\tau_{\ell_1}^{-b_1}\cdots\tau_{\ell_2}^{a_m}\tau_{\ell_1}^{-b_m}
\end{equation}
for $m\ge2,\ a_1,b_m\ge0,\ a_2,\dots,a_m,b_1,\dots,b_{m-1}\ge1$, and for $m=1,\ a_1,b_1\geq 1$.
Notice that the latter expression in \eqref{eq: phi_S_penner} is obtained by considering the multicurves $\{\ell_2\}$ and $\{\ell_1\}$.
For example, $\tau_{\ell_1}^2\tau_{\ell_2}^{-2}$ is pseudo-Anosov, whereas $\tau_{\ell_1}^2\tau_{\ell_2}^{2}$ is not.
By Lemma~\ref{lem: lift_dehntwists}, one can lift $\phi_S$ to obtain a map $\phi_V\in\Symp(V,\lambda_{\st};{\bm \sigma})$, which is expressed as either
\begin{equation}\label{eq: A2_pennerLag}
\phi_V=\tau_{L_1}^{a_1}\tau_{L_2}^{-b_1}\cdots\tau_{L_1}^{a_m}\tau_{L_2}^{-b_m}\quad\text{or}\quad
\tau_{L_2}^{a_1}\tau_{L_1}^{-b_1}\cdots\tau_{L_2}^{a_m}\tau_{L_1}^{-b_m}.
\end{equation}
Thus, the class $[\phi_V]\in\pi_0\Symp(V,\lambda_{\st})$ is of $A_2$-Penner-type with respect to $(L_1,L_2)$.
\end{example}

We now define the notion of a Penner-type map on a Liouville domain $(W,\lambda)$ with an $A_k$-configuration $(L_1,\dots,L_k)$ for $k\ge 2$.
Consider an exact symplectic embedding $\Phi\colon (V,\lambda_{\st})\longhookrightarrow (W,\lambda)$ that maps an $A_k$-configuration from Lemma~\ref{lem: configuration} to $(L_1,\dots,L_k)$.
Then the image $\Phi(V)\cong V$ forms a Liouville subdomain of $W$, which we identify with the Milnor fibre of the $A_k$-singularity.
Let $\ell_i=L_i\cap S$ as before.
We say that $\phi\in\Symp(W,\lambda)$ is of \emph{Penner-type} if $\supp(\phi)\subset V\setminus \p V$, the restriction $\phi|_V$ belongs to $\Symp(V,\lambda_{\st};{\bm \sigma})$, and $\phi|_S\in\Symp(S,\lambda_{\st}|_S)$ is given by Penner's construction with respect to two multicurves $\mathcal{C},\mathcal{D}\subset\{\ell_1,\dots,\ell_k\}$ whose disjoint union is precisely $\{\ell_1,\dots,\ell_k\}$, as described in Theorem~\ref{thm: penner_construction}.
A class $[\phi]\in\pi_0\Symp(W,\lambda)$ is said to be of \emph{Penner-type} if it admits a representative $\phi$ of Penner-type.
Conversely, if $\phi_S\in\Symp(S,\lambda_{\st}|_S)$ is given by Penner's construction with respect to multicurves formed from $\{\ell_1\,\dots,\ell_k\}$, then it follows from Lemma~\ref{lem: lift_dehntwists} that we can lift $\phi_S$ to a map $\phi_V\in\Symp(V,\lambda_{\st};{\bm \sigma})$.
Extending $\phi_V$ by the identity map on the complement $W\setminus V$ yields a map $\phi\in\Symp(W,\lambda;V)$ that is of Penner-type.

\begin{remark}\label{rem: twopenner_notions}
The $A_2$-Penner type classes in $(W,\lambda)$ introduced in Section~\ref{sec: intro} are precisely the Penner-type classes for $k=2$ defined above.
This can be verified by adapting the arguments from Example~\ref{ex: pennerA2}, Lemma~\ref{lem: lift_dehntwists} and the preceding discussion.
\end{remark}

\begin{remark}[$A_k$-singularity for $k\ge 2$]\label{rem: akpenner}
Let $S$ denote the two-dimensional Milnor fibre of the $A_k$-singularity for $k\ge 2$.
Then $S$ is a compact Riemann surface of genus $g$ with $m$ boundary circles, where $(m,g)=\displaystyle\Big(1,\frac{k}{2}\Big)$ if $k$ is even, and $(m,g)=\displaystyle\Big(2,\frac{k-1}{2}\Big)$ if $k$ is odd.
As depicted in Figure~\ref{fig:  a3surface},
\begin{figure*}[t]
    \centering
\begin{overpic}[width=10cm]{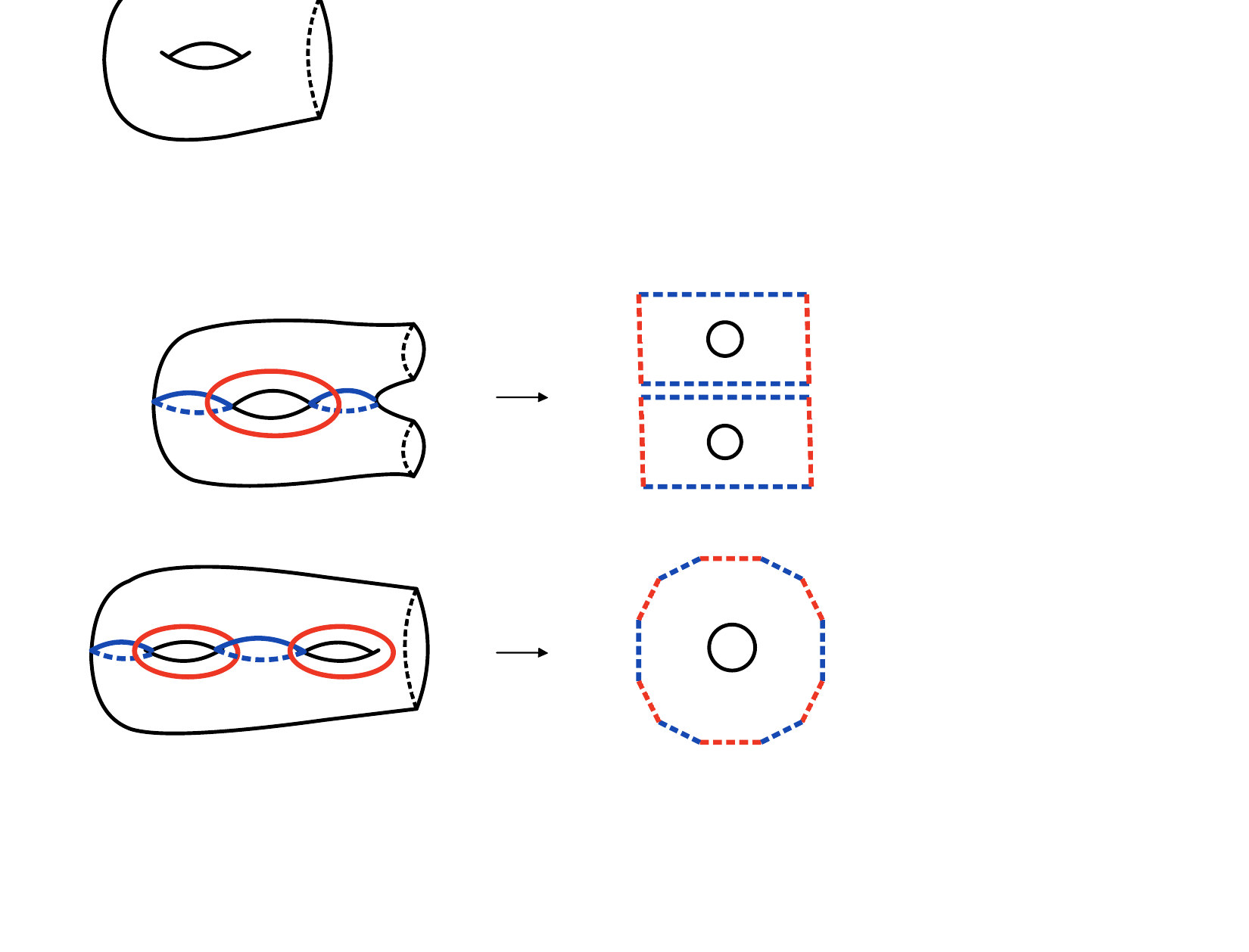}
\put(38, 142){\color{blue} \small $\ell_1$}
\put(100, 142){\color{blue} \small $\ell_3$}
\put(70, 110){\color{red} \small $\ell_2$}
\put(90, 90){\small $k=3$}

\put(90, 0){\small $k=4$}
\put(10, 45){\color{blue} \small $\ell_1$}
\put(65, 47){\color{blue} \small $\ell_3$}
\put(37, 18){\color{red} \small $\ell_2$}
\put(97, 18){\color{red} \small $\ell_4$}

\end{overpic}

    \caption{The complements of the $A_k$-configuration $(\ell_1,\dots,\ell_k)$ in the Milnor fibre $S$ for $k=3,4$ are punctured discs with 8 and 12 edges.
    }
    \label{fig: a3surface}
\end{figure*}
the surface $S$ is diffeomorphic to the plumbing of disc cotangent bundles over $S^1$ associated with the $A_k$ Dynkin diagram.
Let $(\ell_1,\dots,\ell_k)$ be an $A_k$-configuration in $S$ constructed from Lemma~\ref{lem: configuration}.
The zero sections of the disc cotangent bundles in the plumbing constitute an $A_k$-configuration $(\tilde{\ell}_1,\dots,\tilde{\ell}_k)$ of Lagrangian circles.
It is known that the diffeomorphism above can be chosen such that it maps $(\ell_1,\dots,\ell_k)$ to $(\tilde{\ell_1},\dots,\tilde{\ell_k})$, see \cite[Section~3.3.2]{BaKw21} and \cite[Section~1]{Wu14} for details.
Now, the only possibilities for two multicurves $\mathcal{C},\mathcal{D}\subset \{\ell_1,\dots,\ell_k\}$ such that their disjoint union is $\{\ell_1,\dots,\ell_k\}$ and fills $S$ are either
\begin{align*}
\mathcal{C} &= \{\ell_{p}\mid p\le k\text{ is odd}\}\quad \text{and}\quad \mathcal{D}=\{\ell_{p}\mid p\le k\text{ is even}\}, \text{ or}\\
\mathcal{C} &= \{\ell_{p}\mid p\le k\text{ is even}\}\quad\text{and}\quad\mathcal{D}=\{\ell_{p}\mid p\le k\text{ is odd}\}.
\end{align*}
See Figure~\ref{fig: a3surface}.
This motivates the definition of $A_k$-Penner-type classes in Section~\ref{sec: intro}.
In a similar vein as in Remark~\ref{rem: twopenner_notions}, one can show that being an $A_k$-Penner-type class is equivalent to being a Penner-type class with respect to an $A_k$-configuration $(L_1,\dots,L_k)$.
\end{remark}


\section{Proofs of Theorems~\ref{thm: mainthmA}, \ref{thm: mainthmB} and Corollary~\ref{cor: corollaryA}}\label{sec: proofs}
\subsection{Proof of Theorem~\ref{thm: mainthmA}}
As in Section~\ref{sec: penner_milnor}, we identify a Liouville subdomain $V\subset W$ with the Milnor fibre of the $A_k$-singularity.
Let $[\phi]\in \pi_0\Symp(W,\lambda)$ be a class of $A_k$-Penner-type.
It follows from Remark~\ref{rem: akpenner} that $[\phi]$ is of Penner-type.
By definition, we may choose a representative $\phi\in \Symp(W,\lambda;V)$ such that $\phi|_V\in \Symp(V,\lambda_{\st};{\bm \sigma})$ and $\phi|_S\in\Symp(S,\lambda_{\st}|_S)$ is given by Penner's construction formed from $\{\ell_1,\dots,\ell_k\}$.
It follows from Theorem~\ref{thm: penner_construction} that $\phi|_S$ is pseudo-Anosov.
Combining the local-to-global identity (Theorem~\ref{thm: localtoglobal}), the Smith-type inequality (Theorem~\ref{thm: milnor_smith}), and Theorem~\ref{thm: cotton_clay}, we deduce that
$$
h_{\floer}(\phi) = h_{\floer}(\phi|_V) \ge h_{\floer}(\phi|_S) > 0.
$$
This completes the proof.
\subsection{Proof of Corollary~\ref{cor: corollaryA}}\label{sec: corollaryA}
Let $\dim W=2m$.
We recall known results about smooth isotopy classes of Dehn twists when $m$ is even.
First, \cite[Lemma~3.12]{To15} states that every squared Dehn twist $\tau^2$ induces the identity map on homology.
If $m=2$ or $6$, then $\tau^2$ is smoothly isotopic to the identity by a compactly supported isotopy in a neighborhood of the Lagrangian sphere that defines~$\tau$.
More generally, for any even $m$, we can still obtain the same result after passing to some power $\tau^{2i}$, and in fact, $i=4$ is known to be sufficient for all even $m$.
We refer to \cite[Section~5]{Se14_1} and references therein.
Now, suppose that $\dim W=4n$ and that $(L_1,L_2)$ is an $A_2$-configuration of Lagrangian spheres, and consider an $A_2$-Penner-type class $[\tilde{\phi}]\in K(W,\lambda)$ given by
$$
\tilde{\phi}=\tau_{L_1}^{8a_1}\tau_{L_2}^{-8b_1}\cdots\tau_{L_1}^{8a_m}\tau_{L_2}^{-8b_m}
$$
for $m\ge2,\ a_1,b_m\ge0,\ a_2,\dots,a_m,b_1,\dots,b_{m-1}\ge1$, and for $m=1,\ a_1,b_1\geq 1$.
According to Theorem~\ref{thm: mainthmA}, we have $h_{\floer}(\tilde{\phi})>0$, and hence $[\tilde{\phi}]$ has infinite order in $K(W,\lambda)$ by Lemma~\ref{lem: entropy_infinite}.
We shall show that all powers of $[\tilde{\phi}]$ have positive Floer-theoretic entropy, that is, $h_{\floer}(\tilde{\phi}^\ell)>0$ for all $\ell\ge 1$.
While it is generally false that $h_{\floer}(\phi)>0$ implies $h_{\floer}(\phi^\ell)>0$ for $\ell\ge 2$, in our case each power $\tilde{\phi}^\ell$ remains of $A_2$-Penner-type.
Consequently, Theorem~\ref{thm: mainthmA} yields that $h_{\floer}(\tilde{\phi}^\ell)>0$ for all $\ell\ge 1$.

\subsection{Proof of Theorem~\ref{thm: mainthmB}}
Let $\phi\in\Symp(W,\lambda)$ be given.
We begin with the identity
\begin{equation}\label{eq: entropy_prod}
h_{\topo}(\phi) = h_{\topo}(\mathds{1} \times \phi),
\end{equation}
where we view $\mathds{1} \times \phi$ as a map in $\Symp (\widetilde{W\times W},\lambda\oplus(-\lambda))$.
Here $\widetilde{W\times W}$ is a rounded product Liouville domain defined in Section~\ref{sec: lagrfloer}.
Now, \eqref{eq: entropy_prod} follows from \cite[Proposition~3.1.7]{KaHa95}, which states that $h_{\topo}(\phi)=h_{\topo}(\mathds{1}\times \phi)$ holds, considering $\mathds{1}\times\phi$ as a map in $\Diff(W\times W)$ (not on $\widetilde{W\times W}$).
Since the support of $\mathds{1}\times \phi\in\Diff(W\times W)$ is contained in $\widetilde{W\times W}$ (see Remark~\ref{rem: hypersurface_phi}), the identity \eqref{eq: entropy_prod} follows.
The classical Yomdin theorem \cite[Theorem~1.4]{Yo87} implies that the $2n$-dimensional volume growth $v_{2n}(\mathds{1} \times \phi)$ of $\mathds{1} \times \phi\in \Diff(\widetilde{W\times W})$ provides a lower bound for the topological entropy, that is,
\begin{equation}\label{eq: yomdin}
	h_{\topo}(\mathds{1} \times \phi)\ge v_{2n}(\mathds{1} \times \phi).
\end{equation}
Here, $v_{2n}(\mathds{1} \times \phi)$ is defined as the quantity
\begin{equation}\label{eq: volumegrowth}
	v_{2n}(\mathds{1} \times \phi)=\sup_{\Theta}\limsup_{k\to \infty} \frac{\log \Vol((\mathds{1} \times \phi)^k(\Theta))}{k} \in [0,\infty],
\end{equation}
where the supremum runs over all $2n$-dimensional compact submanifolds $\Theta\subset \widetilde{W\times W}$ and $\Vol(\cdot)$ denotes the  volume induced by a Riemannian metric on $W\times W$.
Without loss of generality, we  assume $h_{\floer}(\phi)>0$.
Take $\alpha>0$ with $\alpha<h_{\floer}(\phi)$.
We will show that the diagonal submanifold $\Delta\subset \widetilde{W\times W}$ provides a positive volume growth of $\mathds{1}\times \phi$.
Specifically, we establish that there exists a constant $\tilde{C}>0$ such that
\begin{equation}\label{eq: volume_exp_ineq}
	\Vol((\mathds{1} \times \phi)^k(\Delta))=\Vol(\Gr(\phi^k)) \ge  \tilde{C} e^{\alpha k}
\end{equation}
possibly after passing to a subsequence of $k$.
This will complete the proof in view of \eqref{eq: yomdin} and \eqref{eq: volumegrowth}.

To derive the inequality \eqref{eq: volume_exp_ineq}, we first choose $C'>0$ such that
\begin{equation}\label{eq: floerent_ineq}
\dim \HF(\phi^{k}) \ge C'e^{\alpha k}	
\end{equation}
holds, possibly after passing to a subsequence of $k$.
This follows from the definition of $\alpha$.
Pick a \emph{positive isotopy} $\rho$ of $\Gr(\phi^k)$ with respect to $\Delta$, that is, it is the Hamiltonian 1-flow of a Hamiltonian $H\colon \widetilde{W\times W}\to \R$, where $H$ satisfies the following: it vanishes away from the boundary, is convex near the boundary with respect to cylindrical coordinates, and is linear near the boundary with positive slope strictly less than $\min \Spec(\p(\widetilde{W\times W}),\lambda\oplus(-\lambda))$.
Moreover, the support of $H$ is contained in a region close to the boundary of $\widetilde{W\times W}$ on which $\phi$ acts identically.
The positive isotopy enables us to have $\p \Delta\cap \p (\rho(\Gr(\phi^{k})))=\emptyset$ for all $k\in \N$.
From now on, we write $\Delta'_{k}:=\rho(\Gr(\phi^{k}))$.
By \cite[Lemma~3.2]{BaLe25} 
 and the proof of \cite[Theorem 4.1]{BaLe25}, there exists a \emph{Lagrangian tomograph} of the diagonal $\Delta$, which is a family $\{\Delta_s\}_{s\in B}$ of admissible Lagrangians in $\widetilde{W\times W}$ with $\Delta_0=\Delta$ and boundary $\p \Delta_s=\p \Delta$, parametrised by a closed ball $B$ of sufficiently large dimension, satisfying the following:
\begin{itemize}
	\item $\Delta_s$ is Hamiltonian isotopic to $\Delta$ for all $s\in B$; and
	\item $\Delta_s$ intersects $\Delta_k'$ transversely for each $k\in \N$ and almost every $s\in B$ (depending on $k$). 
\end{itemize}
We also refer to \cite[Section~5.2]{CiGiGu21} for Lagrangian tomographs on closed symplectic manifolds.
For each $k\in \N$ and for almost every $s\in B$ as above, we define the integrable function
$$
N_k(s):=|\Delta_s\cap \Delta_k'|,
$$
where $B$ is equipped with the Lebesgue measure $ds$.
We then deduce that
\begin{alignat*}{3}
\hspace{3cm}	N_{k}(s) &=|\Delta_s\cap \Delta'_{k}| && \\
	&\ge \dim \HF(\Delta_s,\Delta'_{k}) \qquad\qquad && (\text{by definition of $\HF(L_0,L_1)$})\\
	&\ge \dim \HF(\Delta_s,\Gr(\phi^{k}))  && (\text{by invariance property with $\rho$})\\
	&\ge \dim \HF(\Delta,\Gr(\phi^{k}))  && (\text{by invariance property})\\
	&= \dim \HF(\phi^{k})  && (\text{by Theorem~\ref{thm: can_isom}})\\
	&\ge C'e^{\alpha k}. && (\text{by \eqref{eq: floerent_ineq}})
\end{alignat*}
On the other hand, Crofton's inequality in \cite[Lemma~3.4]{BaLe25} provides a constant $C>0$, independent of $\Delta_k'$, such that
$$
\int_B N_k(s)ds \leq C\cdot \Vol(\Delta_k').
$$
Therefore, we establish the inequality
\begin{equation}\label{eq: volume_entropy}
\Vol(\Delta'_{k})\ge \frac{1}{C}\int_B N_{k}(s)ds \ge \tilde{C}e^{\alpha k}
\end{equation}
with the constant $\tilde{C}:=\frac{C'}{C}\int_Bds=\frac{C'}{C}\Vol(B)>0$.
Furthermore, for any sufficiently small $\epsilon>0$ we can choose the positive isotopy $\rho$ such that
\begin{equation}\label{eq: volume}
|\Vol(\Delta'_k)- \Vol(\Gr(\phi^k))|<\epsilon
\end{equation}
holds for all $k\in\N$.
This is possible since the region where $\Delta_k'$ and $\Gr(\phi^k)$ differ is localised near the boundary $\p (\widetilde{W\times W})$.
From \eqref{eq: volume_entropy} and \eqref{eq: volume}, we conclude that
$$
\Vol(\Gr(\phi^k))+\epsilon \ge \tilde{C}e^{\alpha k}
$$
holds for all $k\in \N$.
This completes the proof of Theorem~\ref{thm: mainthmB}.


\subsection*{Acknowledgement}
The authors cordially thank Jungsoo Kang, Felix Schlenk, and Ivan Smith for their interest and for sharing their invaluable insights.
We are especially indebted to Felix Schlenk for a careful reading of an earlier draft of this manuscript.
This work is supported by the Open KIAS Center at Korea Institute for Advanced Study.
JK is supported by the National Research Foundation of Korea (NRF) grants funded by the Korean government (MIST, No. RS-2025-24803252) and through the G-LAMP program (MOE, RS-2024-00441954).
MK is supported by the National Research Foundation of Korea(NRF) grant funded by the Korea government(MSIT) (No. RS-2025-23524132).

\bibliographystyle{abbrv}
\small
\bibliography{mybibfile}

\noindent
\small
{Joontae Kim, Department of Mathematics and Center for Nano Materials, Sogang University, 35 Baekbeom-ro, Mapo-gu, Seoul 04107, Republic of Korea\vspace{0.1cm}\\
\emph{E-mail address: }\texttt{joontae@sogang.ac.kr}\vspace{0.4cm}\\
Myeonggi Kwon, Department of Mathematics Education, and Institute of Pure and Applied Mathematics, Jeonbuk National University, Jeonju 54896, Republic of Korea\vspace{0.1cm}\\
\emph{E-mail address: }\texttt{mkwon@jbnu.ac.kr}
}


\end{document}